\titlespacing*{\section}{0pt}{1.125\baselineskip}{0.25\baselineskip}
\titlespacing*{\subsection}{0pt}{0.675\baselineskip}{0.125\baselineskip}
\titlespacing*{\subsubsection}{0pt}{0.5\baselineskip}{0.125\baselineskip}
\titlespacing*{\paragraph}{0pt}{0.25\baselineskip}{0.25\baselineskip}
\newcommand{\email}[1]{\protect\href{mailto:#1}{#1}}
\begin{document}

\title{Adaptive Dimension Reduction for Overlapping Group Sparsity}
\author{Yifan Bai\thanks{School of Mathematical Sciences, Shanghai Jiao Tong University, Shanghai China
  (\email{yifanbai@sjtu.edu.cn}).}
\and Clarice Poon\thanks{Mathematics Institute, University of Warwick, Coventry UK 
  (\email{clarice.poon@warwick.ac.uk}, \url{https://cmhsp2.github.io/}).}
\and Jingwei Liang\thanks{School of Mathematical Sciences and Institute of Natural Sciences, Shanghai Jiao Tong University, Shanghai China 
  (\email{jingwei.liang@sjtu.edu.cn}, \url{https://jliang993.github.io/}).}}
\date{}
\maketitle

\begin{abstract}
Typical dimension reduction techniques for nonoverlapping sparse optimization involve screening or sieving strategies based on a dual certificate derived from the first-order optimality condition, approximating the gradients or exploiting certain inherent low-dimensional structure of the sparse solution.
In comparison, dimension reduction rules for overlapping group sparsity are generally less developed because the subgradient structure is more complex, making the link between sparsity pattern and the dual variable indirect due to the non-separability. 
In this work, we propose new dual certificates for overlapping group sparsity and a novel adaptive scheme for identifying the support of the overlapping group LASSO. We demonstrate how this scheme can be integrated into and significantly accelerate existing algorithms, including Primal-Dual splitting method, alternating direction method of multipliers and a recently developed variable projection scheme based on over-parameterization. 
We provide convergence analysis of the method and verify its practical effectiveness through experiments on standard datasets.
\end{abstract}

\paragraph{Key words.}
Sparse optimization, overlapping group sparsity, dimension reduction, first-order algorithm, variable projection.


\paragraph{MSC codes.} 49K99, 65K05, 90C30

\section{Introduction}
Dimension reduction in large-scale sparse optimization has been an active research topic over the years, however most of the research focuses on the nonoverlapping cases, e.g. sparsity and nonoverlapping group sparsity.
While for overlapping group sparsity, it remains an open challenge. In this paper, we aim to address this problem. 
Specifically, we consider the following optimization problem,
\begin{equation}
\label{eq:main_problem}
\begin{aligned}
    &\min\limits_{x\in\mathbb{R}^n} \bBa{ \Phi(x) \eqdef \sfrac{1}{\lambda}F(x) + R(x) } \quad\text{with}\quad  F(x)=\sfrac{1}{2} \norm{Ax-y}^2 \qandq R(x) = \msum_{i=1}^{\Nn} w_i \norm{x_{G_i}} ,
\end{aligned}
\end{equation}
where $A: \RR^n \to \RR^m$ is a bounded linear operator, $y \in \RR^m$, 
$\Gg = \{ G_i\}_{i=1,...,\Nn}, \Nn\in\NN$ with $G_i\subset \dbrack{n}$ and $ \mcup_i G_i = \dbrack{n}$ represents the $\Nn$-group covering of $x$, and $w_i$ denotes the weight associated with $i$-th group. 
Problem \cref{eq:main_problem} emerges from numerous fields, such as data science, machine learning, signal/image processing and statistics, to name a few. 
Representative problems include LASSO \cite{tibshirani1996regression} (special case for $G_i = \{i\}$ and $w_i=1$), group LASSO \cite{yuan2006model} and overlapping group LASSO \cite{jacob2009group}, see also \cite{bach2012optimization} for more examples.

For large-scale optimization problem, a key bottleneck of algorithms to solve \eqref{eq:main_problem} efficiently is their ability to discard variables or groups that are inactive, i.e. the zero entries of optimal solution and the corresponding columns of $A$. 
For standard (nonoverlapping) sparsity, most dimension reduction techniques rely on the (group) separability which enables the construction of sharp dual certificates based on its subgradient, see \cref{sec: intro_dr} for details.
In comparison, overlapping group sparsity destroys this separability: the dual norm becomes a coupled decomposition over groups, the subgradient at zero is non-unique and shared across overlapping groups.

The aim of this work is to develop dimension reduction rules that explicitly account for this coupling. We design new dual certificates adapted to overlapping group structures and propose a novel adaptive dimension reduction scheme that can be integrated into standard optimization algorithms, yielding significant acceleration.
Before presenting our main contribution, we first briefly summarize numerical algorithms and existing dimension reduction techniques for solving \cref{eq:main_problem}.

\subsection{Numerical solvers}

First-order methods are among the most widely used approaches for solving sparse optimization problem, due to their simplicity and efficiency. 
An essential ingredient of first-order methods is the the proximal operator (see equation \cref{eq:prox}) of the nonsmooth function. 
When the proximal operator is easy to compute, popular choices of first-order methods to solve \cref{eq:main_problem} include Forward-Backward splitting method \cite{lions1979splitting} and its accelerated versions \cite{liang2017activity,beck2009fast}, the block coordinate descent \cite{friedman2010regularization}. 
When evaluating the proximal operator is difficult or expensive, alternatives include Primal-Dual splitting methods \cite{esser2010general,chambolle2011first,condat2012primal}, augmented-Lagrangian approaches \cite{gabay1976dual,glowinski1975approximation}. We refer to \cite{beck2017first,LaurentCondat2023} for overviews of first-order methods.

Despite their tremendous success over the past decades, first-order methods face growing scalability challenges: as problem sizes increase, their computational complexity increasingly undermines efficiency.
As solution of \cref{eq:main_problem} is sparse, incorporating such a sparse pattern into the first-order method would significantly reduce the computational complexity. 
Consequently, dimension reduction techniques, which detect the sparse pattern of the solution either as a preprocessing or over the course of iteration, are developed.

\subsection{Dimension reduction techniques}
\label{sec: intro_dr}

In general, existing dimension reduction approaches fall broadly into three categories: screening, sieving, and methods exploiting second-order sparsity. 
Below we summarize representative results on nonoverlapping sparsity.

\paragraph{Screening} 
For nonoverlapping sparsity, screening rules \cite{ghaoui2010safe} attempt to detect variables that are zero at the solution. When this identification is provably correct, the rule is safe. Notable examples include gap-safe screening \cite{fercoq2015mind,ndiaye2017gap,ndiaye2016gap}, see also the references therein for other approaches. Screening rules consist of two subcategories: the static rules which are performed once before deploying an optimization algorithms; the dynamic rules that operate sequentially along a regularization path (e.g. DPP \cite{wang2013lasso}) or dynamically during iterations \cite{fercoq2015mind,ndiaye2017gap}. 
We remark that, for dynamic safe screening rules the dimension is monotonically decreasing. 

\paragraph{Adaptive sieving}
Compared to screening, sieving methods take the opposite direction: i) at beginning, estimate the support of the solution via for example cross correlation test; ii) solve the problem restricted to the support; iii) apply optimality condition test and entries that violate the condition are added to the support estimation, then solve the restricted problem again. Note that the dimension of sieving approach is monotonically increasing, until the estimated support set includes the support of the solution. Representative work of adaptive sieving includes \cite{yuan2025adaptive,li2024efficient}.

\paragraph{Second-order sparsity} 
In recent years, semi-smooth Newton methods \cite{li2018highly,li2018efficiently,xiao2018regularized,zhang2020efficient} become increasingly popular. They reply on exploiting the semi-smoothness of for instance the proximal operator, and can achieve asymptotically superlinear convergence with low per-iteration complexity. 
However, for overlapping group structures, the proximal operator does not have a closed-form expression due to the non-separability, precluding the direct application of semi-smooth Newton methods.

\subsection{Contributions}
Compared to the active research of dimension reduction for nonoverlapping sparsity, attempts on overlapping sparsity are quite limited. 
To the best of our knowledge, existing dimension reduction methods for overlapping group sparsity include \cite{jenatton2011structured, yuan2011efficient, wang2013lasso,lee2014screening}, they either rely on special group structures or are tailored to the variants of \cref{eq:main_problem}.
To address this gap, we propose an {\tt Ada}ptive {\tt D}imension {\tt R}eduction for {\tt O}verlapping grou{\tt P} {\tt S}parsity (AdaDROPS, see \cref{algo:adadrops}), which is a generic framework for overlapping group sparse models. More specifically, our contribution consists of the following aspects
\begin{enumerate}
    \item We show that there are two choices of dual variables, which are the LASSO certificate (see \cref{def:lasso-cert}) and the OGN certificate (see \cref{def:ogn-cert}), which can certify the sparse pattern of overlapping group sparsity, see \cref{prop:lasso_certificate} and \cref{prop:ogn_certificate} respectively. This extends the current result on nonoverlapping sparsity to the overlapping setting.
    
    \item Based on the two dual certificates, we design an adaptive dimension reduction scheme for overlapping group sparse optimization problem, i.e. AdaDROPS and see \cref{algo:adadrops} for details. AdaDROPS is a versatile framework and can be applied to numerical methods for solving \cref{eq:main_problem}. Theoretical guarantee is also provided.
    
    \item We demonstrate the practical utility of AdaDROPS by integrating it with existing numerical schemes, including Primal-Dual splitting method \cite{chambolle2011first}, alternating direction method of multipliers \cite{gabay1983chapter,glowinski1975approximation} and variable projection \cite{poon2021smooth,poon2023smooth}. Extensive numerical experiments on standard datasets confirm that AdaDROPS provides significant computational acceleration, for certain cases more than an order acceleration is observed.
\end{enumerate}

\paragraph{Paper Organization} The rest of the paper is organized as follows: in \cref{sec:math_background}, we collect necessary mathematical definitions and notations. In \cref{sec:certificates}, we reformulate the overlapping group norm, and propose two dual certificates of overlapping group sparsity. In \cref{sec:adaptive_dr}, we propose the adaptive dimension reduction scheme for overlapping group sparsity (AdaDROPS). In \cref{sec:applications}, combination of AdaDROPS with existing numerical solvers is discussed. In \cref{sec:numerics}, we conduct numerical experiments to evaluate the performance of AdaDROPS. Finally, \cref{sec:conclusion} concludes the work with further discussions.

\section{Mathematical background}
\label{sec:math_background}

Given $n\in\mathbb{N}$, $\RR^n$ is the $n $-dimensional Euclidean space equipped with inner product $\langle\cdot,\cdot\rangle$ and induced norm $\norm{\cdot}$, $\bm{1}_n$ denotes $n$-dimensional vector of all $1$'s, $\Id_n$ denotes the identity operator on $\RR^n$.

Denote the set $\dbrack{n} = \{ 1,2,\cdots,n\}$, for an index set $G\subset \dbrack{n}$, $|G|$ denotes the cardinality of $G$. 
Given $x\in\RR^n$, $\mathrm{supp}(x) = \{ i \in \dbrack{n} \mid x_i \neq 0\}$ denotes the support of $x$, and $\diag(x)$ denotes the diagonal matrix whose diagonal elements are elements of $x$. 
We use $\odot$ to represent the Hadamard (point-wise) product of two vectors or matrices. For brevity, we write $x \odot x$ as $x^2$. We denote $x_1/x_2$ the point-wise division for two vectors $x_1$ and $x_2$ where $x_2$ has no zero elements.

Given a proper closed convex function $R: \RR^n \to ]-\infty, +\infty]$ 
and any $x\in\RR^n$, the subdifferential of $R$ at $x$ is a set defined by 
$$
\partial R :\RR^n \rightrightarrows \RR^n, x\mapsto \Ba{ u \in \RR^n \mid R(z) \geq R(x) + \langle  u,z-x\rangle } ,
$$
which is closed and convex. Any element of $\partial R(x)$ is called a subgradient. The {\it relative interior} of $\partial R$ is denoted as ${\rm ri}\pa{\partial R}$. 
The proximal operator of $R$ is defined as
\begin{equation}\label{eq:prox}
    \mathrm{prox}_{\gamma R}(z) = \mathop{\arg\min}_{x \in \RR^n} \left\{ \gamma R(x) + \sfrac{1}{2} \norm{x - z}^2\right\}, \enskip \gamma > 0  .
\end{equation} 
For $\ell_1$-norm and nonoverlapping $\ell_{1,2}$-norm, their proximal operator has closed-form expressions \cite{beck2017first}, which is not the case for overlapping $\ell_{1,2}$-norm due to the non-separability. 
{For indicator function of a closed convex set $\Tt\subseteq\RR^n$, its proximal operator is the projection operator of the set and denoted as $P_{\Tt}$. }
For the optimization problem \cref{eq:main_problem}, let $\xsol$ be a global minimizer, 
then its fixed-point characterization (according to the optimality condition) reads
\begin{equation}\label{eq:opt-cnd}
\xsol = \mathrm{prox}_{R} \bPa{ \xsol -  \sfrac{1}{\lambda}A^\top(A\xsol - y) } .
\end{equation}

\section{Certificates for overlapping group LASSO}\label{sec:certificates}
In this section, we first introduce a lifted
representation of the overlapping group norm and the associated extended supports and subspaces; 
Then define two certificates: a gradient-based certificate $\betasol$ (easy to compute, conservative), and a lifted sub-gradient certificate $u^\dagger$ (tailored to overlapping setting). We keep the proofs that explain the tightness gaps in the main text, while collecting properties of the lifted operators in \cref{sec:ogn_properties}.

\subsection{Overlapping group norm}\label{sec:overlapping_group_norm}
In this part, we provide the lifted reformulation of overlapping group norm, the geometric objects (extended supports and subspaces) 
that will be used to construct \emph{dual certificates}. 

\subsubsection{Lifted formulation}\label{sec:def_L}

To analyze the overlapping group sparsity, it is convenient to rewrite the regularizer through a \emph{lifting operator} that makes the group structure explicit. The key is to embed $x\in \RR^n$ into a higher-dimensional vector $z = Lx \in \RR^p$ whose coordinates are arranged in \emph{nonoverlapping groups} corresponding to the original overlapping groups. In the lifted space, the overlapping group norm becomes a mixed $\ell_{1,2}$-norm, which yields a cleaner description of its subdifferential and clarifies how to build \emph{dual certificates}.

\paragraph{Overlapping group structure}
Given $n,\Nn\in\NN$, an (overlapping) $\Nn$-group covering of $\dbrack{n}$ is the set $\Gg = \big\{G_1,...,G_{\Nn} \big\}$ such that
\begin{equation}\label{eq:partition}
G_i \subseteq \dbrack{n}, ~ i\in\dbrack{\Nn}  
,\quad
\mcup_{i\in\dbrack{\Nn}} G_i = \dbrack{n} ,
\end{equation}
and the overlap of any two groups is not necessarily empty. 

\paragraph{Lifting operator}
Given $x\in\RR^n$, for each group $G_i \in \Gg$, define the linear operator $L_{G_i}: \RR^n \to \RR^{|G_i|}$ by
$
 L_{G_i} x =w_i x_{G_i}. 
$
Stacking all $L_{G_i}, i\in\dbrack{\Nn}$ yields 
\begin{equation}
    L:\RR^n \to \RR^p, \quad p = \sum_{i\in\dbrack{\Nn}} |G_i|,\quad Lx = \Pa{L_{G_i}x}_{i=1}^{\Nn}.
\end{equation}
In overlapping setting, the matrix form $L$ has full column rank.

\paragraph{Nonoverlapping group structure in the lifted space}
Given $z\in\RR^p$, let $\Jj \eqdef \{J_1,\cdots, J_{\Nn}\}$ be a \emph{nonoverlapping} $\Nn$-group {partition} of $\dbrack{p}$ such that
\begin{equation}\label{eq:zJi_eq}
    \forall i \in \dbrack{\Nn}, \quad z_{J_i} = L_{G_i} x . 
\end{equation}
Given any $k \in \dbrack{p}$, define 
\begin{equation}\label{eq:phi_k}
\phi(k) = t \in\dbrack{\Nn} \enskip{\rm such~that}\enskip k \in J_t ,
\end{equation}
that retrieves the nonoverlapping group $J_t$ that $k$ belongs to; see \cref{sec:fact:L} for more properties of $L$ and $\phi$.

\paragraph{Reformulation of overlapping group norm}
The overlapping group norm of $x$ now can be represented as
\[
R(x) 
= \msum_{i\in\dbrack{\Nn}} w_i \norm{x_{G_i}}
= \msum_{i\in\dbrack{\Nn}} \norm{z_{J_i}}
= \norm{Lx}_{1,2} .
\]

\paragraph{Subdifferential of overlapping group norm}
The subdifferential of $R$ is $\partial R(x) = L^\top \partial \norm{Lx}_{1,2}$, where for any $u \in\norm{Lx}_{1,2} $, we have
\begin{equation}
\label{eq:subdiff_l2}
u_{J_i}
= \left\{
\begin{aligned}
    & x_{G_i} /\norm{x_{G_i}}, &\quad x_{G_i} \neq 0 , \\
    &\ba{ g\in\RR^{\abs{{G_i}}} \mid \norm{g}\leq 1 } , &\quad x_{G_i} = 0 .
\end{aligned}
\right.
\end{equation}
Note that \cref{eq:subdiff_l2} shows that the subgradient can \emph{certify group support}: active groups must satisfy $\norm{u_{J_i}}=1$, while $\norm{u_{J_i}} < 1$ is a sufficient condition for $x_{G_i}=0$.

\subsubsection{Active groups, supports and subspaces}\label{sec:def_extended_supp}
Let $x\in\RR^n$ be overlapping group sparse, then $z=Lx$ is group sparse. Define the following index sets
\begin{itemize}
\item Index of the active (nonzero) groups
\begin{equation}\label{eq:support_Ix}
\Ii_{x} \eqdef \Ba{ t\in\dbrack{\Nn} \mid \norm{x_{G_t}} \neq 0 }. 
\end{equation}

\item {Extended {\it coordinate} support of $x$ and the corresponding spanned subspace}
\begin{equation}\label{eq:support_Ex}
\Ee_{x} \eqdef \dbrack{n} \setminus \Pa{ \mcup_{t \in \Ii_{x}^c} G_t } ~~~{\rm and}~~~  \Tt_{x} \eqdef \Ba{ x' \in \RR^n \mid \supp(x') \subseteq \Ee_{x} } .
\end{equation}
By ``extended" it means that $\Ee_x$ contains the support of $x$ (i.e. $\supp(x) \subseteq \Ee_{x} \subseteq \mcup_{t \in \Ii_{x}} G_t$), and remains compatible with those active groups (coordinates touched by any inactive groups are excluded). 

\item {Extended {\it group} support of $z$ and spanned subspace}
\begin{equation}\label{eq:support_Ez}
\Ee_{z} \eqdef \mcup_{t \in \Ii_{x}} J_t   ~~~{\rm and}~~~  \Tt_{z} \eqdef \Ba{ z' \in \RR^p \mid \supp(z') \subseteq \Ee_{z} }  .
\end{equation}
Clearly, there holds $\supp(z) \subseteq \Ee_{z}$. 

\item {Extended lifted coordinate support and spanned subspace}
\begin{equation}\label{eq:support_EL}
\Ee_{L} \eqdef \supp\pa{LP_{\Tt_x} \bm{1}_n} \subseteq \dbrack{p}  ~~~{\rm and}~~~      \Tt_{L} \eqdef \Ba{z' \in \RR^p \mid \supp(z') \subseteq \Ee_L} ,
\end{equation}
which describe active coordinates in the lifted space  corresponding to the extended support after lifting.

\end{itemize}
An illustration of these notions is provided in \cref{fig:index_sets}. 
Properties of the projection operators of these subspaces are collected in \cref{sec:prop:proj}.

\begin{figure}[H]
    \centering
    \includegraphics[height=47mm, trim={4mm 1mm 4mm 3mm},clip]{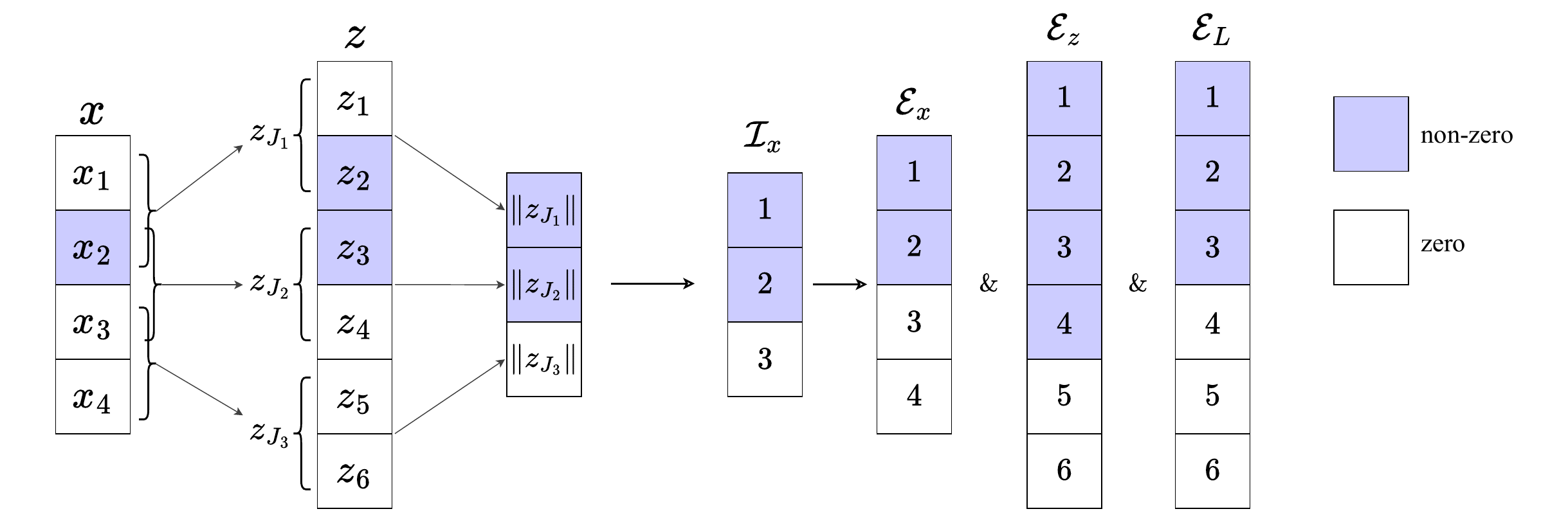} 
    \vspace*{-1ex}
    \caption{
    Illustration of overlapping group norm: i) $x\in\RR^4$ has $1$ nonzero element, the grouping is $\Gg = \Ba{ \ba{1,2}, \ba{2,3}, \ba{3,4} }$; ii) $z\in\RR^6$ has nonoverlapping grouping $\Jj = \Ba{ \ba{1,2}, \ba{3,4}, \ba{5,6} }$; iii) The active groups of $x$ is $\Ii_x = \Ba{1,2}$; iv) The extended supports are $\Ee_x = \Ba{1,2},\Ee_z = \Ba{1,2,3,4}$ and $ \Ee_L = \Ba{1,2,3}$. 
    } \vspace*{-2ex}
    \label{fig:index_sets}
\end{figure}

\begin{remark}
The reason of considering the extended support instead of the support is that we focus on dealing with the group sparsity. However, it reduces to the standard support when considering $\ell_1$-norm. 
\end{remark}

\subsubsection{Effective lifting operator}
A recurring issue in the overlapping setting is that directions orthogonal to the 
space $\Tt_x$ can still create components in the active lifted groups after lifting. The following construction removes precisely this ``leakage" while preserving first order optimality condition.

\begin{definition}[Effective lifting operator]
For a given $x\in \RR^n$ and its associated subspaces $\Tt_x, \Tt_z$, the effective lifting operator is defined by
\begin{equation*}
    \widehat{L} \triangleq L - P_{\Tt_{z}} L P_{\Tt_{x}^\bot}.
\end{equation*}
\end{definition}

An illustration of $\widehat{L}$ is provided below in \cref{fig:hatL}, note that $\widehat{L}$ preserves the full column rank; see \cref{rem:widehat_eq}. 

\begin{figure}[htb]
    \centering
    \includegraphics[height=51mm, trim={8mm 3mm 8mm 6mm},clip]{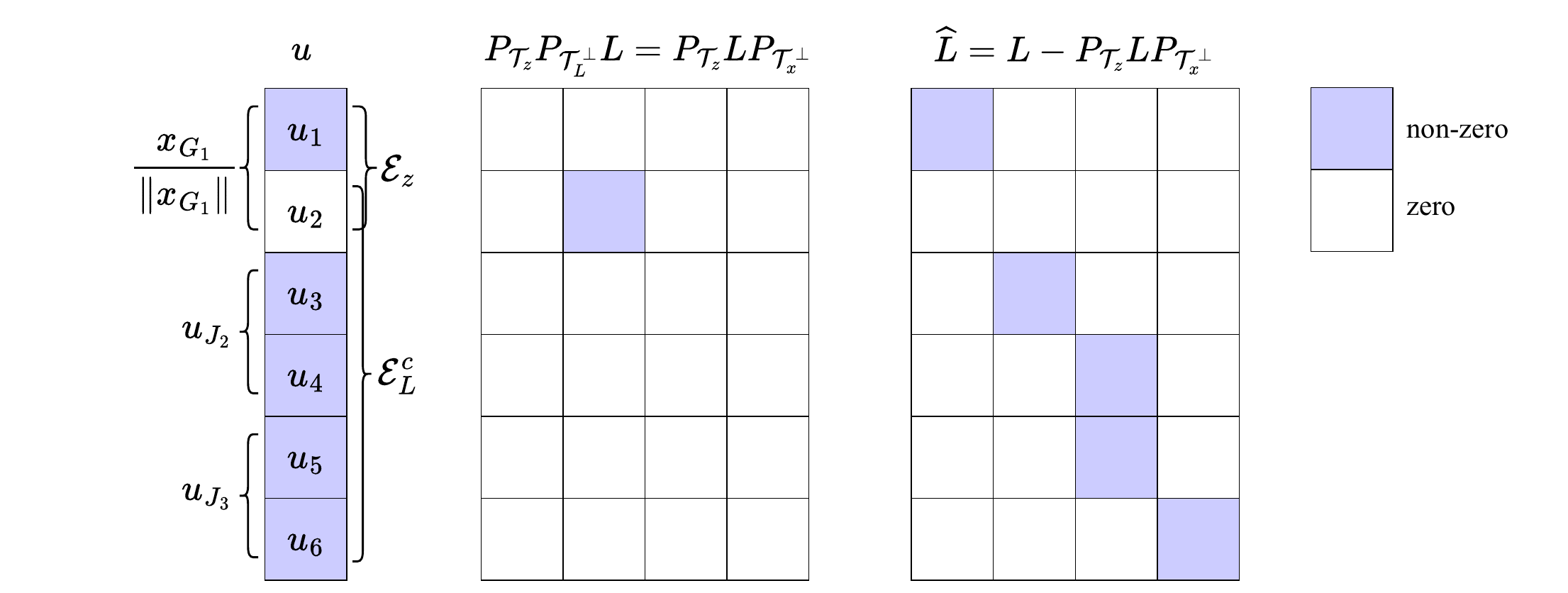}
    \caption{Illustration of $\widehat{L}$. Consider $\Gg = \Ba{ \ba{1,2}, \ba{2,3}, \ba{3,4}}$ with $x_{G_1} \neq 0, x_{G_2}=x_{G_3}=0$. For any $u\in \partial \norm{Lx}_{1,2}$, we have $u_2=0$. The extended support $\Ee_z = \ba{1,2}, \Ee_L = \ba{1}$. The subtraction in $\widehat{L}$ is removing the 2nd row (corresponding to $x_2$, which overlaps between zero and nonzero groups).}
    \label{fig:hatL}
    \vspace*{-1ex}
\end{figure}

In \cref{sec:ogn_certificate} we use this operator to define a dual certificate for support identification. 
In what follows, we show that the components removed in $\widehat{L}$ play no role when pairing $L$ with subgradient.

\begin{proposition}\label{rem:widehat_eq}
There holds
\begin{itemize}
    \item $\widehat{L}$ has full column rank.

    \item For any $u\in\partial \norm{Lx}_{1,2}$, one has $(P_{\Tt_z} L P_{\Tt_x^\bot})^\top u=0$ and
    \begin{equation*}
        L^\top u = \widehat{L}^\top u.
    \end{equation*}
\end{itemize}
    
\end{proposition}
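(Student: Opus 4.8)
The plan is to prove the two claims separately, and to observe that the second claim actually implies the first rather directly. I would begin with the orthogonality statement $(P_{\Tt_z} L P_{\Tt_x^\bot})^\top u = 0$, since this is the analytic heart of the proposition. Unravelling the adjoint, this says $P_{\Tt_x^\bot} L^\top P_{\Tt_z} u = 0$, i.e. that $L^\top(P_{\Tt_z} u)$ lies in $\Tt_x$. The key input is the structure of $u \in \partial\norm{Lx}_{1,2}$ recorded in \cref{eq:subdiff_l2}: for $t \in \Ii_x^c$ (inactive groups), $u_{J_t}$ is an arbitrary vector of norm $\le 1$, while for $t \in \Ii_x$, $u_{J_t} = x_{G_t}/\norm{x_{G_t}}$. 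Applying $P_{\Tt_z}$ zeroes out exactly the blocks $u_{J_t}$ for $t \notin \Ii_x$, by the definition $\Ee_z = \bigcup_{t\in\Ii_x} J_t$ in \cref{eq:support_Ez}. So $P_{\Tt_z} u$ is supported only on the active lifted groups, and there $L^\top(P_{\Tt_z}u) = \sum_{t\in\Ii_x} L_{G_t}^\top u_{J_t}$; each $L_{G_t}^\top$ scatters a $\abs{G_t}$-vector back into coordinates $G_t \subseteq \dbrack n$. The claim that this sum is supported in $\Ee_x$ then follows from the definition $\Ee_x = \dbrack n \setminus \bigcup_{t\in\Ii_x^c} G_t \supseteq \bigcup_{t\in\Ii_x} G_t$ — wait, that containment is the wrong direction, so the argument needs the sharper fact that any coordinate $j \in \bigcup_{t\in\Ii_x} G_t$ that also lies in some inactive group $G_s$, $s\in\Ii_x^c$, must receive a zero contribution; this is where one uses that for overlapping coordinates shared between an active and an inactive group, the relevant subgradient block entry is forced to vanish (the $u_2 = 0$ phenomenon illustrated in \cref{fig:hatL}), i.e. one uses compatibility between active and inactive groups more carefully than the crude index bound. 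Concretely, I expect the clean way is: $L^\top(P_{\Tt_z}u)$ is already a valid element of $\partial R(x) = L^\top\partial\norm{Lx}_{1,2}$ up to the inactive-group freedom, and by the extended-support machinery in \cref{sec:def_extended_supp} every element of $\partial R(x)$ restricted to the active-group part lives in $\Tt_x$; this should be available from the properties collected in \cref{sec:prop:proj} or be provable by the same index bookkeeping.

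Granting the orthogonality, the identity $L^\top u = \widehat L^\top u$ is immediate: by definition $\widehat L = L - P_{\Tt_z} L P_{\Tt_x^\bot}$, so $\widehat L^\top u = L^\top u - (P_{\Tt_z} L P_{\Tt_x^\bot})^\top u = L^\top u - 0$. No further work is needed here.

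For the full-column-rank claim, the point is that $\widehat L$ differs from $L$ only by a term whose range lies in $\Tt_z$ and whose kernel contains $\Tt_x$. I would argue: suppose $\widehat L x' = 0$. Decompose $x' = P_{\Tt_x} x' + P_{\Tt_x^\bot} x'$. Then $\widehat L x' = L x' - P_{\Tt_z} L P_{\Tt_x^\bot} x'$. Projecting onto $\Tt_z^\bot$: since $P_{\Tt_z}L P_{\Tt_x^\bot}x' \in \Tt_z$, we get $P_{\Tt_z^\bot} L x' = 0$, i.e. $Lx'$ is supported in $\Ee_z$. Projecting instead onto $\Tt_z$: $P_{\Tt_z} L x' - P_{\Tt_z} L P_{\Tt_x^\bot} x' = 0$, hence $P_{\Tt_z} L P_{\Tt_x} x' = 0$, i.e. $L P_{\Tt_x} x'$ is supported in $\Ee_z^c$. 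Combined with $L P_{\Tt_x} x'$ being automatically supported in $\Ee_z$ (since $P_{\Tt_x}x'$ is supported in $\Ee_x \subseteq \bigcup_{t\in\Ii_x}G_t$, whose lift lands in $\bigcup_{t\in\Ii_x}J_t = \Ee_z$ — modulo the same overlap subtlety as above), this forces $L P_{\Tt_x}x' = 0$, and since $L$ has full column rank, $P_{\Tt_x}x' = 0$. It then remains to show $P_{\Tt_x^\bot}x' = 0$: from $\widehat L x' = 0$ and $P_{\Tt_x}x' = 0$ we get $L P_{\Tt_x^\bot}x' = P_{\Tt_z}L P_{\Tt_x^\bot}x'$, i.e. $(\Id_p - P_{\Tt_z})L P_{\Tt_x^\bot}x' = 0$, so $L P_{\Tt_x^\bot}x'$ is supported in $\Ee_z$; but one should be able to check that $L$ maps $\Tt_x^\bot$ into a subspace meeting $\Tt_z$ trivially (a coordinate in $\Tt_x^\bot$ is touched only by coordinates of $\dbrack n$ lying in some inactive group, which lift into lifted groups $J_t$ with $t\in\Ii_x^c$, i.e. into $\Ee_z^c$), whence $L P_{\Tt_x^\bot}x'=0$ and again $P_{\Tt_x^\bot}x'=0$ by injectivity of $L$. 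Thus $x'=0$.

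The main obstacle, in all three parts, is the same recurring overlap subtlety: a single coordinate $j\in\dbrack n$ can simultaneously belong to an active group and an inactive one, so the crude set identities among $\Ee_x,\Ee_z,\Ee_L$ do not quite say "lifts of $\Tt_x$ land in $\Tt_z$" on the nose — one needs the definition $\Ee_x = \dbrack n \setminus \bigcup_{t\in\Ii_x^c}G_t$ (which deliberately excises those shared coordinates) together with the fact that the subdifferential block on an inactive lifted group is free but gets killed by $P_{\Tt_z}$. I expect the cleanest route is to first establish, as a lemma (or to invoke from \cref{sec:ogn_properties}/\cref{sec:prop:proj}), the precise commutation/inclusion facts $L\,\Tt_x \subseteq \Tt_L \subseteq \Tt_z$ and $P_{\Tt_z} L P_{\Tt_x^\bot}$ has range in $\Tt_z$ and is supported off $\Tt_L$, and then the three bullet points reduce to short linear-algebra manipulations as sketched above.
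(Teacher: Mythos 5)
Your treatment of the second bullet follows the paper's route: the paper proves exactly your ``$u_2=0$ phenomenon'' as \cref{lem:lemma2.4} (for $u\in\partial\norm{Lx}_{1,2}$ one has $P_{\Tt_L^\bot}P_{\Tt_z}u=0$, because on active groups $u_{J_t}=x_{G_t}/\norm{x_{G_t}}$ vanishes at every coordinate shared with an inactive group, those coordinates of $x$ being zero), and combines it with $P_{\Tt_x^\bot}L^\top P_{\Tt_L}=0$ from \cref{lemma:proj}~(b) to get $(P_{\Tt_z}LP_{\Tt_x^\bot})^\top u=0$; the identity $L^\top u=\widehat L^\top u$ is then immediate, as you say. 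Where you genuinely diverge is the full-column-rank claim: you run a kernel argument (split $x'$ along $\Tt_x$, project $\widehat Lx'=0$ onto $\Tt_z$ and $\Tt_z^\bot$, use injectivity of $L$), whereas the paper (\cref{lem:hatL_full_rank}) computes $\widehat L^\top\widehat L=L^\top L-P_{\Tt_x^\bot}L^\top P_{\Tt_z}L$ and shows it is \emph{diagonal} with strictly positive entries, each diagonal entry being a sum of $w_t^2$ over the relevant groups. Your route is shorter and more conceptual; the paper's explicit diagonal formula buys more, since it is reused later (cheap inversion of $\widehat L_k^\top\widehat L_k$ in \cref{remark:overhead}, and the bound $(\widehat L^\top\widehat L)_{(i,i)}^{-1}\le 1/w_t^2$ in the proof of \cref{prop:link_two}).

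One localized error in your rank argument needs fixing. In the last step you justify ``$L$ maps $\Tt_x^\bot$ into a subspace meeting $\Tt_z$ trivially'' by asserting that a coordinate in $\Ee_x^c$ lifts only into groups $J_t$ with $t\in\Ii_x^c$, i.e.\ into $\Ee_z^c$. That is false in general: a coordinate in $\Ee_x^c$ may simultaneously belong to an active group (coordinate $2$ in \cref{fig:hatL} is exactly such a case), so some of its lifted copies land inside $\Ee_z$; in other words $L\Tt_x^\bot\not\subseteq\Tt_z^\bot$. The claim you actually need, $L\Tt_x^\bot\cap\Tt_z=\{0\}$, is nevertheless true and follows from the weaker observation that every $i\in\Ee_x^c$ belongs to \emph{at least one} inactive group $G_s$, whose lifted copy of $i$ lies in $\Ee_z^c$ and carries the nonzero weight $w_s$; since each row of $L$ has a single nonzero entry, $P_{\Tt_z^\bot}L$ is therefore injective on $\Tt_x^\bot$, which gives $P_{\Tt_x^\bot}x'=0$ from $P_{\Tt_z^\bot}LP_{\Tt_x^\bot}x'=0$ and completes your argument. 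The same caution applies (harmlessly) to your earlier aside: for coordinates in $\Ee_x$ there is no subtlety at all, since by the definition \cref{eq:support_Ex} they lie in no inactive group, so $L\Tt_x\subseteq\Tt_L\subseteq\Tt_z$ exactly as in \cref{lemma:proj}.
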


We refer to \cref{appendix:widehat} for the proof.

\subsection{Dual certificates}
\label{sec:certificate}

In sparse optimization, the key of dimension reduction is an easy access dual vector which can certify the sparse pattern of the solution. Such a vector is called the {\it dual certificate} \cite{Duval2016SparseSS,Vaiter13}. 
For nonoverlapping sparsity regularizations, such as $\ell_1$-norm and nonoverlapping group norm, constructing the dual certificate is rather simple and has been extensively explored in dimension reduction and sensitivity analysis; see for instance \cite{ndiaye2017gap,Duval2016SparseSS} and the references therein. 

However, due to the non-diagonal and full-column-rank $L$, there is limited work on extending existing results to the overlapping group norm. 
In the overlapping group LASSO,
there are two natural certificate viewpoints:
\begin{enumerate}
    \item A \emph{primal-space} certificate $\betasol$ obtained from the gradeint of the date fidelity term, which is often used in safe screening;
    \item A \emph{lifted-space} certificate $u$ that directly lives in $\partial \norm{L\xsol}_{1,2}$, hence certifies group activity via group norms.
\end{enumerate}
In the following, we discuss two certificates based on these two viewpoints.

\subsubsection{Two certificates}

Two choices of dual certificates are adopted in this work: the LASSO certificate (see \cref{def:lasso-cert}) and the OGN certificate (see \cref{def:ogn-cert}). 
The former one inherits the existing work \cite{Vaiter13,ndiaye2017gap,yuan2025adaptive} on nonoverlapping sparsity, while the latter one is designed based on the structure of the subdifferential.

\begin{definition}[LASSO certificate]
\label{def:lasso-cert}
    Define the LASSO certificate of \cref{eq:main_problem} as
    \begin{equation}
        \betasol \eqdef   - \sfrac{1}{\lambda} \nabla F(\xsol) \quad \Pa{\text{for least squares: } \betasol = - \sfrac{1}{\lambda}A^\top ( A \xsol - y)}.
    \end{equation}
\end{definition}

For \cref{eq:main_problem}, based on the optimality condition and \cref{rem:widehat_eq}, there exists $\usol \in \partial\norm{L\xsol}_{1,2}$ such that
\begin{equation*}
    L^\top \usol = \betasol 
    \qandq
    \widehat{L}^\top \usol = \betasol  .
\end{equation*}
The identity above motivates constructing a canonical lifted certificate (see \cref{sec:ogn_certificate}) by solving the linear system with $\widehat{L}$.
%
%
However, as $\widehat{L}^\top$ is only full-row-rank,  
we can consider the minimal norm solution of $\widehat{L}^\top u = \betasol$ which is
\begin{equation}\label{eq:umin}
\widehat{u}_{\min} = \widehat{L} ( \widehat{L}^\top \widehat{L})^{-1} \betasol  . 
\end{equation}
We then define the following {\it OGN certificate} (with OGN stands for overlapping group norm) based on this minimal norm solution.

\begin{definition}[OGN certificate]
\label{def:ogn-cert}
The OGN certificate $u^\dagger$ is defined by
\[
\forall t\in \dbrack{\Nn},\quad 
u^\dagger_{J_t}
= 
\left\{
\begin{aligned}
&(\widehat{u}_{\min})_{J_t} , & t \in \Ii_{\xsol}^c , \\
&{\xsol_{G_t}}/{ \norm{\xsol_{G_t}} }, & t \in \Ii_{\xsol} .
\end{aligned}
\right.
\] 
\end{definition}

More explanations of the construction and certification property of the OGN certificate are elaborated in \cref{sec:ogn_certificate}.

\begin{remark}[Nonoverlapping case]
If the groups do not overlap, $L$ is diagonal and the two certificates coincide, recovering the standard sparsity and group sparsity screening rules.
\end{remark}

\subsubsection{The LASSO certificate: a conservative rule}\label{sec:lasso_certificate}

In the nonoverlapping setting, $\betasol$ can be directly used to certify the sparsity of $\xsol$ in the sense that
\begin{equation}\label{eq:lasso_certificate}
\forall t\in\dbrack{\Nn} ~~
\left\{
\begin{aligned}
\norm{\betasol_{G_t}} < w_t  &  \enskip\Longrightarrow\enskip   \xsol_{G_t} = 0  , \\
\xsol_{G_t} \neq 0  & \enskip\Longrightarrow\enskip \norm{\betasol_{G_t}} = w_t   .
\end{aligned}
\right.   
\end{equation}
The rule \cref{eq:lasso_certificate} is the foundation of existing work on dimension reduction for nonoverlapping sparsity. 
Due to this reason, we call $\betasol$ the LASSO certificate.

It turns out that \cref{eq:lasso_certificate} can be extended to overlapping group sparsity. 

\begin{proposition}
\label{prop:lasso_certificate}
    For problem \cref{eq:main_problem} with overlapping group norm, for any $t \in \dbrack{ \Nn }$, if $\norm{\betasol_{G_t}} < w_t$, then $\xsol_{G_{t}} = 0$. 
\end{proposition}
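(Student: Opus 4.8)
The plan is to argue by contraposition from the first-order optimality condition, in the spirit of the nonoverlapping rule \cref{eq:lasso_certificate}, but carefully accounting for the fact that the subgradient of $R$ at $\xsol$ is the image under $L^\top$ of a subgradient $\usol\in\partial\norm{L\xsol}_{1,2}$. Suppose $\xsol_{G_t}\neq 0$ for some $t\in\dbrack{\Nn}$; I want to show $\norm{\betasol_{G_t}} = w_t$, which is the contrapositive of the claim (together with the trivial possibility $\norm{\betasol_{G_t}}>w_t$, which the nonoverlapping analysis rules out but which we only need to exclude from the strict inequality). First I would invoke the optimality condition: $\betasol = -\frac1\lambda\nabla F(\xsol)\in\partial R(\xsol) = L^\top\partial\norm{L\xsol}_{1,2}$, so there is $\usol\in\partial\norm{L\xsol}_{1,2}$ with $L^\top\usol = \betasol$.

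The main computation is to unpack $L^\top\usol$ coordinate-by-coordinate on the block $G_t$. By the block structure of $L$, for any $u$ in the lifted space one has $(L^\top u)_j = \sum_{i\,:\,j\in G_i} w_i\, u_{J_i\,[j]}$, where $u_{J_i\,[j]}$ denotes the entry of the $J_i$-block corresponding to coordinate $j\in G_i$; in particular the $G_t$-block of $L^\top u$ receives a contribution $w_t\,(\usol_{J_t})$ from group $t$ itself, plus cross terms from every other group overlapping $G_t$. Here is where the overlap genuinely complicates matters: unlike the nonoverlapping case we cannot simply read off $\betasol_{G_t} = w_t\,\usol_{J_t}$. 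The cleanest route I see is to use \cref{rem:widehat_eq} (which gives $L^\top\usol = \widehat L^\top\usol$ and orthogonality of the removed component against subgradients) and the definition of the extended supports: since $t\in\Ii_{\xsol}$, every coordinate of $G_t$ lies in $\Ee_{\xsol}$, so $G_t\subseteq\Ee_{\xsol}$, and I would restrict the identity $L^\top\usol=\betasol$ to the subspace $\Tt_{\xsol}$, i.e. consider $P_{\Tt_{\xsol}}L^\top\usol = P_{\Tt_{\xsol}}\betasol$. The point of passing to $\widehat L$ and $\Tt_{\xsol}$ is exactly that it kills the "leakage" from inactive overlapping groups, isolating the contribution of the active block.

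Concretely, I expect the argument to reduce to the following: on $\Ee_{\xsol}$, only active groups $J_i$ with $i\in\Ii_{\xsol}$ contribute, and for those, by \cref{eq:subdiff_l2} the block $\usol_{J_i}$ is forced to equal $\xsol_{G_i}/\norm{\xsol_{G_i}}$ — it is uniquely determined, not a free ball element. So on the block $G_t$ the relevant portion of $\betasol$ equals a fixed linear combination of normalized active sub-vectors. The remaining step is a norm estimate: I would use the dual characterization $\norm{\betasol_{G_t}} = \sup\{\langle \betasol_{G_t}, v\rangle : \norm v\le 1\}$, test against $v = \xsol_{G_t}/\norm{\xsol_{G_t}}$ to get the lower bound $\langle\betasol_{G_t},v\rangle\ge\dots$, and combine with the upper bound that comes from $\norm{\usol_{J_i}}\le 1$ for all $i$. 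The bookkeeping of which groups contribute to coordinate $j\in G_t$, and the fact that $\langle L^\top\usol, \xsol\rangle = \norm{L\xsol}_{1,2} = \sum_i w_i\norm{\xsol_{G_i}}$ (the subgradient inequality holding with equality), should pin down $\norm{\betasol_{G_t}} = w_t$ exactly.

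The hard part will be the overlap bookkeeping in the second step: showing rigorously that the cross-terms from inactive groups do not contribute to $\betasol_{G_t}$ (which is what \cref{rem:widehat_eq} and the extended-support construction are designed to guarantee) and that the cross-terms from other \emph{active} groups, while nonzero as contributions to individual coordinates, assemble in a way that still yields $\norm{\betasol_{G_t}} = w_t$ rather than something larger. I anticipate the latter follows from a Cauchy–Schwarz / triangle-inequality argument applied blockwise combined with the equality case $\langle\betasol,\xsol\rangle = R(\xsol)$, but making sure the inequality is tight on exactly the block $G_t$ — rather than only in aggregate over all active groups — is the delicate point, and may require using that $\norm{\usol_{J_t}} = 1$ with $\usol_{J_t}$ parallel to $\xsol_{G_t}$, so that the restriction of $\betasol$ to $G_t$ inherits norm at least $w_t$ from the group-$t$ term while all other contributions are orthogonal-complement directions that cannot be "felt" when testing against $\xsol_{G_t}/\norm{\xsol_{G_t}}$ together with the global equality forcing no slack.
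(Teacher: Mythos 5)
Your overall skeleton (contrapose, write $\betasol=L^\top\usol$ with $\usol\in\partial\norm{L\xsol}_{1,2}$, unpack $L^\top\usol$ coordinate-wise on $G_t$) matches the paper, but the proposal as written has genuine gaps. First, your contrapositive target is overstated: the contrapositive of the claim is only $\xsol_{G_t}\neq 0\Rightarrow\norm{\betasol_{G_t}}\geq w_t$, whereas you set out to prove $\norm{\betasol_{G_t}}=w_t$. That equality is generally \emph{false} in the overlapping setting: the contributions to $\betasol_i$ from other \emph{active} groups overlapping $G_t$ are, at each coordinate with $\xsol_i\neq 0$, equal to $w_j\xsol_i/\norm{\xsol_{G_j}}$, i.e.\ \emph{parallel} to (same sign as) the group-$t$ term, not orthogonal to it; they strictly inflate $\norm{\betasol_{G_t}}$ above $w_t$ in general (this is exactly the loss of tightness discussed in \cref{remark:degeneracy} and the terms $T_1,T_2$ of \cref{appendix:relaxation}). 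So your final step — "other contributions are orthogonal-complement directions" plus a "global equality forcing no slack" from $\langle\betasol,\xsol\rangle=R(\xsol)$ — would fail. Second, the intermediate claim $G_t\subseteq\Ee_{\xsol}$ for $t\in\Ii_{\xsol}$ is false: by \cref{eq:support_Ex}, a coordinate of an active group that also lies in an inactive group is excluded from $\Ee_{\xsol}$ (in the example of \cref{fig:hatL}, $G_1=\{1,2\}$ is active but $\Ee_{\xsol}=\{1\}$); only $\supp(\xsol)\subseteq\Ee_{\xsol}$ holds. Relatedly, neither $\widehat L$, \cref{rem:widehat_eq}, nor projecting the identity onto $\Tt_{\xsol}$ is needed here, and the projection does not "isolate the block $G_t$"; it merely discards the (nonnegative) contribution of the coordinates of $G_t$ lying in inactive groups.

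The missing idea, which is the heart of the paper's proof, is the sign observation: if $\xsol_i\neq 0$ then \emph{every} group containing $i$ is active, so by \cref{eq:subdiff_l2} every lifted entry contributing to $\betasol_i$ equals $w_j\xsol_i/\norm{\xsol_{G_j}}$ and carries the sign of $\xsol_i$. Hence in the decomposition $\betasol_i=w_t\usol_{k_i}+(\text{cross terms})$, dropping the cross terms (and dropping the coordinates of $G_t$ with $\xsol_i=0$) can only decrease each square, giving $\norm{\betasol_{G_t}}^2\geq w_t^2\sum_{i\in G_t,\xsol_i\neq 0}\xsol_i^2/\norm{\xsol_{G_t}}^2=w_t^2$, which is \cref{eq:beta_Gt}; no equality case or global identity is invoked. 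Your inner-product test can be repaired with the same observation: $\langle\betasol_{G_t},\xsol_{G_t}\rangle=\sum_{j}w_j\norm{\xsol_{G_j\cap G_t}}^2/\norm{\xsol_{G_j}}\geq w_t\norm{\xsol_{G_t}}$ (keep only $j=t$, all terms being nonnegative), and Cauchy--Schwarz then yields $\norm{\betasol_{G_t}}\geq w_t$ — but without that nonnegativity step, and with the equality target and the false inclusion above, the proposal does not close the argument.
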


\begin{proof}
Let $\usol$ be such that $\usol \in \partial \norm{L\xsol}_{1,2}, \betasol = L^\top\usol$. Before proving the result, we need a new definition: for any $i\in\dbrack{n}$, define 
\begin{equation}\label{eq:Ki}
\mathscr{K}_i \eqdef \Ba{ k\in\dbrack{p} \mid L_{k,i} \neq 0 } ,
\end{equation}
for which we have 
\begin{enumerate}[label={\rm \alph{*}).}]
    \item The cardinality $\abs{\mathscr{K}_i}$ represents the number of groups that $\xsol_i$ is assigned to.
    \item $\usol_k(k\in\mathscr{K}_i)$ are the entries of $\usol$ that contribute to $\betasol_i$. 
\end{enumerate}
Moreover, recall the the group number retrieval mapping $\phi$ defined in \cref{eq:phi_k}, which will be used later.

Given $t\in\Ii_{\xsol}$ and $i\in {G_t}$ with $\xsol_i\neq 0$, let $k_i\in\dbrack{p}$ be such that $k_i \in \mathscr{K}_i$ and $\phi(k_i)=t$, we have the following decomposition of the $\betasol_i$
\begin{equation*}
\betasol_i
= \msum_{ k\in \mathscr{K}_i, j = \phi(k) } w_j \usol_k 
= w_t \usol_{k_i}  + \msum_{ k\in \mathscr{K}_i\setminus \ba{k_i}, j = \phi(k)  } w_j \usol_k .
\end{equation*}
Therefore, for the group $\betasol_{G_t}$, we get
\begin{equation}\label{eq:beta_Gt}
\begin{aligned}
\norm{\betasol_{G_t}}^2 
&= 
\msum_{i\in G_t, \xsol_i \neq 0} \pa{ \betasol_{i} }^2 + \msum_{i\in G_t, \xsol_i = 0} \pa{ \betasol_{i} }^2 \\
&= \msum_{i\in G_t, \xsol_i \neq 0} \Pa{ w_t \usol_{k_i}  + \underbrace{\msum_{ k\in \mathscr{K}_i\setminus \ba{k_i}, j = \phi(k)  } w_j \usol_k}_{T_1}  }^2 + \underbrace{\msum_{i\in G_t, \xsol_i = 0} \pa{ \betasol_{i} }^2}_{T_2} \\
&\geq
\msum_{i\in G_t, \xsol_i \neq 0} \Pa{ w_t \usol_{k_i}  + T_1  }^2 
\overset{\text{\ding{172}}}{\geq} \msum_{i\in G_t: \xsol_i \neq 0} \pa{ w_t \usol_{k_i}   }^2 \\
&= w_t^2 \msum_{i\in G_t,\xsol_i\neq 0} \pa{  \usol_{k_i} }^2 
= w_t^2 \left\lVert\frac{\xsol_{G_t}}{\norm{\xsol_{G_t}}}\right\rVert^2 = w_t^2 .
\end{aligned} 
\end{equation}
The inequality ${\text{\ding{172}}}$ comes from 
the fact that given an $i\in\dbrack{n}$ with $\xsol_{i}\neq 0$, $\usol_{k_i}$ and $\usol_{k}, k\in \mathscr{K}_i$ have the same signs. 
Consequently, we reach the following conclusion
\[
    \xsol_{G_{t}} \neq 0
    \enskip\Longrightarrow\enskip
    \norm{\betasol_{G_t}}  \geq w_t .
\]
The proved contrapositive leads to the desired claim.
\end{proof}

In the above proof, the terms $T_1$ and $T_2$ are discarded directly, we refer to \cref{appendix:relaxation} for further discussion on their influence to the inequality \cref{eq:beta_Gt}. 

\begin{remark}[Nondegeneracy and tightness]\label{remark:degeneracy}
In the {\it nonoverlapping} setting, if $\betasol \in {\rm ri}(\partial R(\xsol))$, i.e. $\betasol$ is nondegenerate, \cref{eq:lasso_certificate} becomes
\[
\forall t\in\dbrack{\Nn} ~~
\left\{
\begin{aligned}
\norm{\betasol_{G_t}} < w_t  &  \enskip\Longleftrightarrow\enskip   \xsol_{G_t} = 0  , \\
\xsol_{G_t} \neq 0  & \enskip\Longleftrightarrow\enskip \norm{\betasol_{G_t}} = w_t   .
\end{aligned}
\right.   
\]
However, for the {\it overlapping} case, characterizing the nondegeneracy of $\betasol$ is complicated and depends on the overlapping patterns. 
For example, one may have $\xsol_{G_t} = 0$ for $\norm{\betasol_{G_t}} > w_t$. 
This difference implies that for overlapping sparsity, \cref{prop:lasso_certificate} is not as tight as that for the nonoverlapping case.  This is why $\betasol$ is viewed as a conservative screening tool in the overlapping setting.

\end{remark}

\begin{remark}
    A similar result to \cref{prop:lasso_certificate} can be found in \cite{lee2014screening}. 
    The main focus of \cite{lee2014screening} is to design static and sequential safe screening rule for overlapping group sparsity, which is a pre-processing of data to remove useless entries. While our goal is to design adaptive dimension reduction, which iteratively estimate the support of optimal solution. See \cref{sec:related_work} for more explanations of \cite{lee2014screening}. 
\end{remark}

\subsubsection{The OGN certificate: direct certification in the lifted space}\label{sec:ogn_certificate}

The definition of LASSO certificate suggests there exists $\usol \in \partial \norm{L\xsol}_{1,2}$
such that $\betasol = L^\top \usol$. 
According to \cref{eq:subdiff_l2}, it is clear that $\usol$ can certify the sparsity of $\xsol$ in the sense 
\begin{equation}\label{eq:pre_certificate}
\forall i\in\dbrack{\Nn} ~~
\left\{
\begin{aligned}
\norm{\usol_{J_t}} < 1 & \enskip\Longrightarrow\enskip   \xsol_{G_t} = 0  , \\
\xsol_{G_t} \neq 0  & \enskip\Longrightarrow\enskip  \norm{\usol_{J_t}} = 1   ,
\end{aligned}
\right.
\end{equation}
implying $\usol$ can also serve as a certificate. 
However, finding such a $\usol$ is not obvious since $L^\top$ is under-determined, therefore we need a proper approach to approximate $\usol$, leading to the definition of OGN certificate $u^\dagger$ in \cref{def:ogn-cert}. To show that $u^\dagger$ indeed can be used as a certificate, we have the following results. The proofs in this section can be found in \cref{appendix:ogn_cert}.

\begin{proposition}[OGN certificate]
\label{prop:ogn_certificate}
Given the LASSO certificate $\betasol$ and OGN certificate $u^\dagger$, there holds $L^\top u^\dagger = \betasol$.
Moreover, for any $t\in\dbrack{\Nn}$, $\xsol_{G_t} = 0$ if $\norm{u^\dagger_{J_t}} < 1$.
\end{proposition}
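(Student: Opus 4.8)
The plan is to establish the two claims in turn. For the identity $L^\top u^\dagger = \betasol$, I would start from the fact (noted just before Definition~\ref{def:ogn-cert}) that there exists $\usol \in \partial \norm{L\xsol}_{1,2}$ with $L^\top \usol = \betasol$ and, by Proposition~\ref{rem:widehat_eq}, also $\widehat{L}^\top \usol = \betasol$. The minimal-norm solution $\widehat{u}_{\min} = \widehat{L}(\widehat{L}^\top\widehat{L})^{-1}\betasol$ of $\widehat{L}^\top u = \betasol$ then satisfies $\widehat{L}^\top \widehat{u}_{\min} = \betasol$ as well. The OGN certificate $u^\dagger$ differs from $\widehat{u}_{\min}$ only on the blocks $J_t$ with $t \in \Ii_{\xsol}$, where it is replaced by $\xsol_{G_t}/\norm{\xsol_{G_t}}$. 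So the natural strategy is to show $\widehat{L}^\top u^\dagger = \betasol$ by checking that this block replacement does not change the value of $\widehat{L}^\top(\cdot)$; equivalently $\widehat{L}^\top(u^\dagger - \widehat{u}_{\min}) = 0$. The difference $u^\dagger - \widehat{u}_{\min}$ is supported on $\bigcup_{t\in\Ii_{\xsol}} J_t = \Ee_z$, i.e. it lies in $\Tt_z$. Then I would use the structure of $\widehat{L} = L - P_{\Tt_z}LP_{\Tt_x^\bot}$: for a vector $v \in \Tt_z$, $\widehat{L}^\top v = L^\top v - P_{\Tt_x^\bot}L^\top P_{\Tt_z} v = L^\top v - P_{\Tt_x^\bot}L^\top v = P_{\Tt_x}L^\top v$, so it suffices to show $P_{\Tt_x}L^\top(u^\dagger - \widehat{u}_{\min}) = 0$, which should follow from the compatibility built into the definitions of $\Ee_x$, $\Ee_z$ — on coordinates in $\Ee_x$ the lifted blocks $J_t$ for $t\in\Ii_{\xsol}$ are exactly the ones that feed back, and both $u^\dagger$ and $\usol$ agree there ($=\xsol_{G_t}/\norm{\xsol_{G_t}}$), so the residual lives off $\Ee_x$. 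Alternatively — and this may be cleaner — I would directly verify $L^\top u^\dagger = \betasol$ by decomposing coordinate-wise as in the proof of Proposition~\ref{prop:lasso_certificate}: for $i \in \Ee_x$, every group $G_t \ni i$ has $t \in \Ii_{\xsol}$ (by definition of $\Ee_x$), so $(L^\top u^\dagger)_i = \sum_{k\in\mathscr{K}_i} w_{\phi(k)} u^\dagger_k = \sum_{k} w_{\phi(k)} \usol_k = \betasol_i$; for $i \notin \Ee_x$, some group containing $i$ is inactive, and one shows both sides vanish using that the columns of $L^\top$ corresponding to inactive-group coordinates of $\widehat{u}_{\min}$ reproduce $\betasol_i$ via the original equation $\widehat{L}^\top\widehat{u}_{\min}=\betasol$ combined with the $\widehat{L}$-vs-$L$ cancellation.

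For the certification property, once $L^\top u^\dagger = \betasol$ is in hand, I would argue that $u^\dagger \in \partial\norm{L\xsol}_{1,2}$, which requires $\norm{u^\dagger_{J_t}} \le 1$ for all $t$ and $u^\dagger_{J_t} = \xsol_{G_t}/\norm{\xsol_{G_t}}$ whenever $\xsol_{G_t}\neq 0$. The second condition is immediate from the definition of $u^\dagger$ on $\Ii_{\xsol}$. For the first, on $t\in\Ii_{\xsol}$ we have $\norm{u^\dagger_{J_t}} = 1$ trivially, and on $t\in\Ii_{\xsol}^c$ we have $u^\dagger_{J_t} = (\widehat{u}_{\min})_{J_t}$, so I need $\norm{(\widehat{u}_{\min})_{J_t}}\le 1$ for inactive groups; this should follow because $\widehat{u}_{\min}$ is the \emph{minimal-norm} solution of $\widehat{L}^\top u = \betasol$, while $\usol$ is \emph{a} solution with $\norm{\usol_{J_t}}\le 1$ on all blocks — but minimal total norm does not automatically give blockwise norm bounds, so this step needs care (see obstacle below). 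Granting $u^\dagger\in\partial\norm{L\xsol}_{1,2}$, the claim $\norm{u^\dagger_{J_t}}<1 \Rightarrow \xsol_{G_t}=0$ is exactly the contrapositive of \eqref{eq:subdiff_l2}/\eqref{eq:pre_certificate}.

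The main obstacle I anticipate is the blockwise bound $\norm{(\widehat{u}_{\min})_{J_t}}\le 1$ for $t\in\Ii_{\xsol}^c$, which is needed to place $u^\dagger$ in the subdifferential. The minimal-$\ell_2$-norm solution of an underdetermined system need not have small norm on each block, so the proof presumably exploits the geometry of $\widehat{L}$ more finely — perhaps that the relevant blocks of $\widehat{u}_{\min}$ are determined by a \emph{local} system (the $\widehat{L}$ construction was designed precisely to decouple the inactive part), or that $\widehat{u}_{\min}$ restricted to $\bigcup_{t\in\Ii_{\xsol}^c}J_t$ equals the minimal-norm solution of a reduced problem on $\Ee_L$ where one can compare directly against $\usol$. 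I would look to Proposition~\ref{rem:widehat_eq} and the properties collected in \cref{sec:ogn_properties} and \cref{sec:prop:proj} for the exact decoupling statement; this is likely where the ``effective'' in effective lifting operator does the real work, and I would be careful not to conflate the easy identity claim with this genuinely structural norm estimate.
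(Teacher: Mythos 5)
Your route for the second claim misreads what needs to be proved, and the obstacle you flag as ``the main obstacle'' is in fact insurmountable because the intermediate statement is false in general. \cref{prop:ogn_certificate} does \emph{not} assert $u^\dagger \in \partial\norm{L\xsol}_{1,2}$, and the blockwise bound $\norm{(\widehat{u}_{\min})_{J_t}}\leq 1$ for $t\in\Ii_{\xsol}^c$ is not available: the paper's ``Why $u^\dagger$'' remark explicitly notes that one may have $\norm{(\widehat{u}_{\min})_{J_t}}>1$ for some inactive $t$, so $\widehat{u}_{\min}$ (and hence $u^\dagger$) need not be a subgradient. The certification statement is much weaker and is purely definitional: if $\xsol_{G_t}\neq 0$ then $t\in\Ii_{\xsol}$ and, by \cref{def:ogn-cert}, $u^\dagger_{J_t}=\xsol_{G_t}/\norm{\xsol_{G_t}}$, so $\norm{u^\dagger_{J_t}}=1$; the claim ``$\norm{u^\dagger_{J_t}}<1 \Rightarrow \xsol_{G_t}=0$'' is just the contrapositive. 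This is also why the proposition is only a one-sided implication, unlike the nondegenerate subgradient characterization \cref{eq:pre_certificate} you were trying to invoke.

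For the identity $L^\top u^\dagger=\betasol$ your sketch is in the right neighbourhood but has two gaps. First, your primary route only yields $\widehat{L}^\top u^\dagger=\betasol$; to conclude you still need $L^\top u^\dagger=\widehat{L}^\top u^\dagger$, i.e. $P_{\Tt_{\xsol}^\bot}L^\top P_{\Tt_{\zsol}}u^\dagger=0$, which holds because $P_{\Tt_{\zsol}}u^\dagger=P_{\Tt_{\zsol}}\usol$ together with \cref{rem:widehat_eq} (via \cref{lem:lemma2.4}), but you never address this passage. Second, the key cancellation $P_{\Tt_{\xsol}}L^\top(u^\dagger-\widehat{u}_{\min})=0$ does not follow from ``the residual lives off $\Ee_x$'': $u^\dagger$ and $\widehat{u}_{\min}$ genuinely differ on $\Ee_z$ (it is $u^\dagger$ and $\usol$ that agree there), and the correct justification uses $P_{\Tt_{\xsol}}L^\top P_{\Tt_{\zsol}}v=P_{\Tt_{\xsol}}L^\top v$ (\cref{lemma:proj} (d)) plus $P_{\Tt_{\xsol}}L^\top\widehat{u}_{\min}=P_{\Tt_{\xsol}}\widehat{L}^\top\widehat{u}_{\min}=P_{\Tt_{\xsol}}\betasol=P_{\Tt_{\xsol}}L^\top\usol$. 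In your coordinate-wise alternative, the assertion that ``both sides vanish'' for $i\notin\Ee_x$ is wrong: $\betasol_i$ has no reason to vanish off the extended support; what is true is that the active-block contributions to $(L^\top u^\dagger)_i$ vanish (the corresponding entries of $\usol$ are zero since $\xsol_i=0$) and the inactive-block contributions reproduce $\betasol_i$ through $\widehat{L}^\top\widehat{u}_{\min}=\betasol$ and the column structure of $\widehat{L}$. The paper's proof makes all of this precise by splitting $u^\dagger=P_{\Tt_{\zsol}}\usol+P_{\Tt_{\zsol}^\bot}\widehat{u}_{\min}$ and invoking \cref{rem:widehat_eq}, \cref{lemma:proj}, $L^\top\widehat{L}=\widehat{L}^\top\widehat{L}$ (\cref{lem:hatL_full_rank}) and the diagonality of $L^\top P_{\Tt_{\zsol}}L$ --- exactly the ingredients your sketch would need to become rigorous.
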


Moreover, $u^\dagger$ can reduce the influence caused by the overlapping entries compared to the LASSO certificate, revealed in the following inequality.

\begin{proposition}[Link between the two certificates]
\label{prop:link_two}
    For any $t \in \dbrack{\Nn}$, there holds $\norm{u^\dagger_{J_t}} \leq \norm{\betasol_{G_t}}/w_t$.
\end{proposition}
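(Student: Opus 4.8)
\textbf{Proof proposal for Proposition \ref{prop:link_two}.}

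The plan is to compare $\norm{u^\dagger_{J_t}}$ with $\norm{\betasol_{G_t}}/w_t$ by splitting into the two cases defining $u^\dagger$. For an active group $t\in\Ii_{\xsol}$, one has $u^\dagger_{J_t} = \xsol_{G_t}/\norm{\xsol_{G_t}}$, so $\norm{u^\dagger_{J_t}} = 1$; the desired inequality then follows immediately from the second implication in \cref{prop:lasso_certificate} (equivalently \cref{eq:beta_Gt}), which gives $\norm{\betasol_{G_t}}\geq w_t$, hence $\norm{\betasol_{G_t}}/w_t \geq 1 = \norm{u^\dagger_{J_t}}$. So the substance of the proposition is entirely in the inactive groups $t\in\Ii_{\xsol}^c$, where $u^\dagger_{J_t} = (\widehat u_{\min})_{J_t}$ with $\widehat u_{\min} = \widehat L(\widehat L^\top \widehat L)^{-1}\betasol$.

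For the inactive case I would proceed as follows. First, recall from \cref{rem:widehat_eq} that $\widehat L^\top \usol = L^\top\usol = \betasol$ for the subgradient $\usol\in\partial\norm{L\xsol}_{1,2}$, so $\widehat u_{\min}$ is literally the minimum-$\ell_2$-norm solution of $\widehat L^\top u = \betasol$; in particular $\norm{\widehat u_{\min}} \leq \norm{\usol}$, and more usefully, for any other solution $u$ the vector $\widehat u_{\min}$ is its orthogonal projection onto $\mathrm{ran}(\widehat L)$. The key structural point to extract is that the columns of $\widehat L$ indexed by $J_t$ for $t\in\Ii_{\xsol}^c$ behave, after the correction $\widehat L = L - P_{\Tt_z}LP_{\Tt_x^\bot}$, like the block of a block-diagonal (nonoverlapping) operator restricted to the inactive groups — this is exactly the ``leakage removal'' the effective lifting operator is designed for, and it should be recorded as a property in \cref{sec:ogn_properties}. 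Concretely, I expect that $(\widehat u_{\min})_{J_t}$ coincides with the block-wise minimum-norm reconstruction of $\betasol_{G_t}$ from the $t$-th diagonal block, which by the same sign-alignment and norm-contraction argument used in the proof of \cref{prop:lasso_certificate} satisfies $\norm{(\widehat u_{\min})_{J_t}} \leq \norm{\betasol_{G_t}}/w_t$: the overlapping contributions $T_1$ that were discarded (with a loss) in \cref{eq:beta_Gt} are precisely what get absorbed by passing from $L$ to $\widehat L$, so the reverse inequality now goes through without that loss.

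The main obstacle will be making rigorous the claim that projecting onto $\mathrm{ran}(\widehat L)$ and then restricting to the block $J_t$ does not mix in contributions from other groups in a way that inflates the norm — i.e. establishing the block-wise bound for $(\widehat u_{\min})_{J_t}$ rather than only a global bound $\norm{\widehat u_{\min}}\leq\norm{\betasol}/\min_i w_i$. This requires understanding the Gram matrix $\widehat L^\top\widehat L$ and its inverse on the inactive-group coordinates; I would try to show $\widehat L^\top\widehat L$ restricted to $\bigcup_{t\in\Ii_{\xsol}^c}J_t$ is block structured relative to $\Jj$ with diagonal blocks $w_t^2 \Id_{|G_t|}$ (using $L_{G_i}x = w_i x_{G_i}$ and the definition of $\widehat L$), which would directly give $(\widehat u_{\min})_{J_t} = w_t^{-2}\,\widehat L_{J_t}^\top(\text{something})$ and collapse the estimate to the single-block computation. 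If the Gram matrix is not exactly block diagonal, a fallback is a variational argument: $\widehat u_{\min}$ minimizes $\norm{u}$ over $\{\widehat L^\top u = \betasol\}$, and testing against the candidate $\tilde u$ that is block-wise minimal on the inactive groups and agrees with $\usol$ elsewhere gives $\norm{\widehat u_{\min}}\leq\norm{\tilde u}$; combined with the block decoupling of $\widehat L^\top$ this should localize to the $J_t$ bound. Either route reduces the proposition to the elementary inequality already implicit in \cref{eq:beta_Gt}, now used in the favorable direction.
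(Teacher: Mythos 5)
Your treatment of the active case ($t\in\Ii_{\xsol}$) is correct and is exactly the paper's: $\norm{u^\dagger_{J_t}}=1$ and \cref{eq:beta_Gt} gives $\norm{\betasol_{G_t}}\geq w_t$. The gap is in the inactive case, where your primary route rests on a structural claim that is false in general: the Gram matrix $\widehat{L}^\top\widehat{L}$ restricted to the coordinates of an inactive group is \emph{not} $w_t^2\,\Id_{|G_t|}$. By \cref{lem:hatL_full_rank} (see \cref{eq:W_ii}) it is diagonal with entries $\sum_{j\in\Ii_{\xsol}^c:\, i\in G_j} w_j^2$ for $i\in\Ee_{\xsol}^c$, which strictly exceeds $w_t^2$ whenever a coordinate of $G_t$ also lies in another inactive group --- precisely the situation of \cref{example:inequality}, where the whole point is that $u^\dagger$ is strictly smaller than $\betasol_{G_t}/w_t$. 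If your exact-identity claim were true, the proposition would hold with equality on inactive groups, contradicting that example. The repair is small and is what the paper does: use the inequality $(\widehat{L}^\top\widehat{L})_{(i,i)}\geq w_t^2$ for $i\in G_t$, together with the observation that for $t\in\Ii_{\xsol}^c$ the rows $J_t$ of the correction $P_{\Tt_{\zsol}}LP_{\Tt_{\xsol}^\bot}$ vanish (since $J_t\subseteq\Ee_{\zsol}^c$), so that $(\widehat{u}_{\min})_{J_t}=\bigl(L(\widehat{L}^\top\widehat{L})^{-1}\betasol\bigr)_{J_t}=w_t\bigl(\widehat{L}_{G_t}^\top\widehat{L}_{G_t}\bigr)^{-1}\betasol_{G_t}$, whence $\norm{u^\dagger_{J_t}}\leq w_t\cdot w_t^{-2}\norm{\betasol_{G_t}}$. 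Note the Gram matrix is fully diagonal, not merely block structured, so no block analysis or ``leakage absorption'' heuristic is needed once this is cited.

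Your fallback variational argument would not close the gap either: minimality of the \emph{global} norm, $\norm{\widehat{u}_{\min}}\leq\norm{\tilde u}$ for any feasible $\tilde u$, gives no control over an individual block $\norm{(\widehat{u}_{\min})_{J_t}}$ --- a minimum-norm solution can be larger than a competitor on one block while compensating on others, and the constraint $\widehat{L}^\top u=\betasol$ alone does not decouple the blocks $J_t$. The decoupling comes only from the explicit formula $\widehat{u}_{\min}=\widehat{L}(\widehat{L}^\top\widehat{L})^{-1}\betasol$ combined with the two facts above (diagonality of $\widehat{L}^\top\widehat{L}$ and agreement of $\widehat{L}$ with $L$ on the inactive rows), which is exactly the argument in \cref{appendix:ogn_cert}.
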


If $t \in \Ii_{\xsol}$, according to the \cref{eq:beta_Gt} and the definition of $u^\dagger$, we have
\begin{equation*}
    u^\dagger_{J_t} 
    = \Pa{\usol_{k_i}}_{i\in G_t, \xsol_i \neq 0} 
    = {\xsol_{G_t}} /{\norm{\xsol_{G_t}}} .
\end{equation*}
Hence the value of $u^\dagger$ on the groups in $\Ii_{\xsol}$ is exactly the term after discarding $T_1,T_2$, leading to $\norm{u^\dagger_{J_t}} \leq \norm{\betasol_{G_t}}/w_t$. Proof for the case $t \in \Ii_{\xsol}^c$ can be found in \cref{appendix:ogn_cert}.

\begin{remark}[Tightness of \cref{prop:ogn_certificate}]
Theoretically, $\norm{\betasol_{G_t}} < w_t$ means $\norm{u^\dagger_{J_t}} < 1$ for $t \in \Ii_{\xsol}^c$. 
As we shall see in the example \cref{example:inequality} and the comparison of \cref{cmp:comparison}, $u^\dagger$ is much more effective than $\betasol$, this is also validated by the numerical experiments in \cref{sec:numerics:overlapping}. 
\end{remark}

\begin{remark}[Why $u^\dagger$]
\begin{itemize}
    \item As $\widehat{u}_{\min}$ is only a minimal norm solution, $\widehat{u}_{\min} \in \partial \norm{L\xsol}_{1,2}$ does not hold in general, meaning that there may exist $t\in\Ii_{\xsol}$ such that $\norm{(\widehat{u}_{\min})_{J_t}} \neq 1$, or $t\in\Ii_{\xsol}^c$ such that $\norm{(\widehat{u}_{\min})_{J_t}} > 1$. Fixing $\norm{(\widehat{u}_{\min})_{J_t}} > 1, t\in\Ii_{\xsol}^c$ is difficult, while correcting $\norm{(\widehat{u}_{\min})_{J_t}} \neq 1, t\in\Ii_{\xsol}$ is straightforward which is the reason of defining $u^\dagger_{J_t} = \xsol_{G_t}/\norm{\xsol_{G_t}}$ for $t \in \Ii_{\xsol}$. 
    
    \item The minimal norm solution ensures the inequality \cref{prop:link_two} for those groups in $\Ii_{\xsol}^c$. We present an example of the inequality in \cref{example:inequality}.

    \item Using the effective lifting operator $\widehat{L}$ to compute the minimal norm solution $\widehat{u}_{\min}$ rather than $L$ is for the purpose to ensure the optimality condition $L^\top u^\dagger = \betasol$ in \cref{prop:ogn_certificate}.
\end{itemize}
\end{remark}

\begin{example}\label{example:inequality}
    We give an example to illustrate \cref{prop:link_two}. Suppose the overlapping group of $x$ is $\Gg = \Ba{ \ba{1,2,3}, \ba{2,3,4}, \ba{3,4,5} }$, and the nonzero group is $\Ii_{\xsol} = \Ba{1}$. 
    The LASSO certificate and the OGN certificate on the 2nd group are
\begin{equation*}
\betasol_{G_2} = 
\begin{pmatrix}
        \frac{1}{w_2} \betasol_2 &
        \frac{1}{w_2} \betasol_3 &
        \frac{1}{w_2} \betasol_4 
\end{pmatrix}^\top 
\qandq
    u^\dagger_{J_2} =
    \begin{pmatrix}
        \frac{1}{w_2} \betasol_2 &
        \frac{w_2}{w_2^2 + w_3^2} \betasol_3 &
        \frac{w_2}{w_2^2 + w_3^2} \betasol_4 
    \end{pmatrix}^\top . 
\end{equation*}
When $w_2 = w_3$, the second and third entry of $\betasol_{G_2}$ are twice the corresponding entries of $u^\dagger$, since indices $3$ and $4$ each appear twice among the zero groups. The more frequently an element overlaps with other zero groups,
the more $u^\dagger$ outperforms $\betasol$.
\end{example}

\begin{remark}
Note that our discussion on the two certificates does not rely on the structure of $F(x)$ in \cref{eq:main_problem}, hence the above result can be extended to other cases including logistic regression or squared hinge loss. 
\end{remark}

\subsubsection{Numerical example}\label{cmp:comparison}

To compare the effectiveness of the two certificates, 
we consider a numerical comparison of the settings
\begin{itemize}
\item[i).] Fix group size as $10$, and number of groups $\Nn=100$. The overlapping size is chosen from $\ba{1,2,...,6}$.

\item[ii).] Choose $m={\tt round}(n/2)$, let $A\in\RR^{m\times n}$ be sampled from normal distribution and take $y \in \RR^m$. 

\item[iii).] Set group weight as $w_i = \sqrt{10}$, and $\lambda>0$ is properly chosen such that the solution of \cref{eq:main_problem} has around $10$-$15$ nonzero groups. 
\end{itemize} 
In \cref{tab:overlapp_size} we provide the number of zero groups  detected by \cref{prop:lasso_certificate} and \cref{prop:ogn_certificate}. 
It can be observed that with overlapping size increasing, the number of zero groups detected by \cref{prop:lasso_certificate} decreases, especially when overlapping size is larger than half group size. 
In comparison, $u^\dagger$ yields a robust performance, implying its potential in more realistic scenarios. 
We remark that the two certificates are based on the optimal solution $\xsol$. When they are estimated over the course of an iterative scheme, comparison can be found in \cref{sec:numerics:overlapping}.

\begin{table}[H]
\centering
\caption{Number of zero groups detected by Propositions \ref{prop:lasso_certificate} \& \ref{prop:ogn_certificate} under different overlapping size, with fixed group size $10$, number of groups $\Nn=100$, and overlapping size changing from $1$ to $6$.}\vspace*{-1ex}
\tabulinesep=1mm
\begin{tabu}{|c|c|c|c|c|c|c|}
\hline
overlapping size & 1 & 2 & 3 & 4 & 5  & 6 \\ \hline
number of zero groups & 88 & 86 & 89 & 89 & 85 & 85 \\ \hline
detected by \cref{prop:lasso_certificate} & 79 & 60 & 53 & 43 & 4 & 1 \\ \hline
detected by \cref{prop:ogn_certificate} & 87 & 81 & 87 & 87 & 73 & 83  \\ \hline
\end{tabu}
\vspace*{-1ex}
\label{tab:overlapp_size}
\end{table}


\section{Adaptive dimension reduction for overlapping group sparsity}
\label{sec:adaptive_dr}

\cref{prop:lasso_certificate} and \cref{prop:ogn_certificate} imply that we can use the two certificates to design dimension reduction scheme for (overlapping group) sparse optimization. 
Motivated by the idea of {\it adaptive sieving} developed in~\cite{yuan2025adaptive}, we propose an {\tt Ada}ptive {\tt D}imension {\tt R}eduction for {\tt O}verlapping grou{\tt P} {\tt S}parsity (AdaDROPS); see \cref{algo:adadrops} for details.

\subsection{Derivation of AdaDROPS}
As dimension reduction is an algorithm-agnostic add-on, we use {\tt algorithm} to denote a numerical solver and refer to \cref{sec:applications} for examples. 
Let $\{\xk\}_{k\in\NN}$ be the minimizing sequence generated by {\tt algorithm}, the basic idea of AdaDROPS is: at step $k$,  apply \cref{prop:lasso_certificate} or \cref{prop:ogn_certificate} to $\xk$ to estimate the sparsity of $\xsol$, then solve the optimization problem restrict to the estimated support. 
In practice, this achieves significant reduction of both dimension and computational complexity.

Before presenting AdaDROPS, we need the following notations
\begin{itemize}
\item $\xsol$ is a solution of \cref{eq:main_problem}, according to \cref{eq:support_Ix}, \cref{eq:support_Ex} and \cref{eq:support_Ez} denote $\Ii_{\xsol}$ the support of nonzero groups, and $\Ee_{\xsol}, \Ee_{\zsol}$ the extended supports of $\xsol, \zsol$. 

\item 
Given $\xk$ and an estimated $\Ii_x^{(k)}$, denote $\Ee_x^{(k)}, \Ee_z^{(k)}$ according to \cref{eq:support_Ex} and \cref{eq:support_Ez}. The associated subspaces are denoted as $\Tt_x^{(k)}$ and $\Tt_z^{(k)}$.

\item 
Define the restricted operators
$
A_{k} = A \circ P_{{\Tt}_{x}^{(k)}}
~{\rm and}~
L_{k} = L \circ P_{{\Tt}_{x}^{(k)}} . 
$
For the original problem \cref{eq:main_problem}, its restriction to ${\Tt}_{x}^{(k)}$ is reads
\begin{equation}
\label{eq:restricted_problem_k}
    \min_{x \in \RR^{n}}  \bBa{ \Phi_{k}(x) \eqdef
    \sfrac{1}{2\lambda} \norm{A_k x - y}^2 +  \norm{L_{k}x}_{1,2} } .
\end{equation}

\end{itemize}

\begin{remark}
\label{rem:proj_diag}
    Note that in \cref{eq:restricted_problem_k}, $x\in\RR^n$ and is not restricted to $\RR^{\abs{\Ee_{x}^{(k)}}}$, the main reason of doing so is to avoid rearrangement of the grouping. Moreover, it still provides computational reduction, as $A_{k}, L_{k}$ have $n-\abs{\Ee_{x}^{(k)}}$ zero columns due to $P_{{\Tt}_{x}^{(k)}}$.
\end{remark}

AdaDROPS is a composition of the {\tt algorithm} and sparsity estimation: i) given a current estimate $\Ii_{x}^{(k)}$, solve the restricted problem \cref{eq:restricted_problem_k} via {\tt algorithm} which outputs $\xkp$; ii) Based on $\xkp$, compute a new estimate $\Ii_x^{(k+1)}$ by $\betakp$ (or $\ukp$) according to \cref{prop:lasso_certificate} (or \cref{prop:ogn_certificate}), then return to step ``i)''. 
Elaborating these two steps leads to the following framework.

\begin{algorithm}[H]
    \renewcommand{\algorithmicrequire}{\textbf{Input:}}
	\renewcommand{\algorithmicensure}{\textbf{Output:}}
    \small
    \caption{The AdaDROPS framework}
    \begin{algorithmic}[1]
    \label{algo:adadrops}
        \REQUIRE  {initial estimated support $\Ii_{x}^{(0)}$, restricted operators $A_{0} = A\circ P_{\Tt_{x}^{(0)}}$, $L_{0} = L \circ P_{\Tt_{x}^{(0)}}$ and $\widehat{L}_{0} = L   - P_{\Tt_{z}^{(0)}} L P_{\pa{\Tt_{x}^{(0)}}^\bot}$. }

        \ENSURE $\xkp$\\[1mm]
        \WHILE{not converged}\vspace*{1mm}
            \STATE {\makecell[tl]{Algorithm updates:\\[1mm] 
            $\qquad \xkp  \leftarrow \texttt{algorithm}(y, A_{k}, L_{k}, \xk)$. \\[1mm]
            }}\\[1mm]
            
            \STATE {\makecell[tl]{Support updates: $ \betakp = -\frac{1}{\lambda}A^\top (A\xkp - y) .$ }}\\[1mm]
            
            \STATE ~~~~~\,{\tt Option I:} $\Ii_{x}^{(k+1)} = \Ii_{x}^{(k)} \mcup \Ba{ t \in \dbrack{\Nn} \mid \norm{\betakp_{G_t}} \geq w_t }$. \doubleslash{\cref{prop:lasso_certificate}} \\[1mm]
            
            \STATE ~~~~{\tt Option II:} { \makecell[tl]{$\ukp =  \widehat{L}_k \pa{ \widehat{L}_k^\top \widehat{L}_k }^{-1} {\betakp} , $ \\[1mm] $\Ii_{x}^{(k+1)} = \Ii_{x}^{(k)} \mcup \Ba{ t \in \dbrack{\Nn} \mid \norm{\uk_{J_t}} \geq 1 }$. \doubleslash{\cref{prop:ogn_certificate} } } } \\[1mm]
            
            \STATE Subspace and operators updates: $\Ee_{x}^{(k+1)} = \dbrack{n} \setminus \Pa{ \mcup_{t \in (\Ii_{x}^{(k+1)})^c} G_t }$\\[1mm]\makecell[tl]{
            $
            \begin{aligned}
                \qquad A_{k+1} &= A\circ P_{\Tt_{x}^{(k+1)}} \qandq
                L_{k+1} = L \circ P_{\Tt_{x}^{(k+1)}} , \\
                \widehat{L}_{k+1} &= L   - P_{\Tt_{z}^{(k+1)}} L P_{\pa{\Tt_{x}^{(k+1)}}^\bot}. 
            \end{aligned}
            $
            }\\[1mm]
        \ENDWHILE
    \end{algorithmic}
\end{algorithm}

\begin{remark}[Initialization]
   We adopt the correlation test in \cite{yuan2025adaptive} to initialize $\Ii^{(0)}_x$. In particular, for group $t\in \dbrack{\Nn}$, one can compute the score $\frac{\| X_{G_t}^\top y \| }{\|X_{G_t} \|_F \|y \|}$, and select the groups with highest scores as the initial $\Ii^{(0)}_x$. According to \cite{wang2013lasso}, a larger score indicates higher probability for this group being nonzero.
\end{remark}

\begin{remark}[The $\Ii^{(k+1)}_x$ update]
For the update of $\Ii^{(k+1)}_x$, in practice, instead of adding all groups satisfying $\|\beta^{(k)}_{G_t}\| \geq w_t$ or $\norm{u^{(k)}_{J_t}}\geq 1$ into $\Ii^{(k)}_x$, we choose incremental update by adding only the groups with largest values. 
\end{remark}

\begin{remark}[Computational overhead]\label{remark:overhead}
Since $\widehat{L}_k^\top \widehat{L}_k$ and $P_{\Tt_{x}^{(k+1)}}, P_{\Tt_{z}^{(k+1)}}$ are diagonal matrices (for $\widehat{L}_k^\top \widehat{L}_k$, see \cref{lem:hatL_full_rank}), the computation complexity of updating $\ukp, A_{k+1}, L_{k+1}$ and $\widehat{L}_{k+1}$ are negligible. 
While for computing $\betakp$, the complexity is $O(mn)$ which is the primary overhead caused by AdaDROPS. 
\end{remark}

\begin{remark}[Convergence of $\Ii_{x}^{(k)}$]
\label{fact:stable}
    Since $|\Ii_{x}^{(k)}|$ is monotonically increasing and upper bounded by $\Nn$, hence it converges to a fix set denoted as $\ol{\Ii}_{x}$.
\end{remark}

Given $\ol{\Ii}_{x} \subset \dbrack{\Nn}$, 
define $\ol{\Ee}_{x}, \ol{\Tt}_{x}$ according to \cref{eq:support_Ex}.
Define 
$
\ol{A} = A \circ P_{\ol{\Tt}_{x}}
$ and $
\ol{L} = L \circ P_{\ol{\Tt}_{x}} 
$, the restriction of problem \cref{eq:main_problem} to $\ol{\Tt}_{x}$ is
\begin{equation}
\label{eq:restricted_problem}
    \min_{x \in \RR^{n}}  \bBa{ \ol{\Phi}(x) \eqdef
    \sfrac{1}{2\lambda} \norm{\ol{A} x - y}^2 +  \norm{\ol{L} x}_{1,2} } .
\end{equation}
Since \cref{eq:restricted_problem} focuses on the entries of $x$ restricted to $\ol{\Ee}_x$, for the rest of the discussion we impose that $x_{\ol{\Ee}_x^c}=0$. 
Assume {\tt algorithm} is convergent, then $x^{(k)}$ converges to a solution, say $\tilde{x} \in \RR^n$, of the restricted problem \cref{eq:restricted_problem}. Denote
\begin{equation}
\label{eq:def_u}
\tilde{\beta} \eqdef -\sfrac{1}{\lambda} A^\top (A\tilde x-y),\quad
\ol{L}  \eqdef L  - P_{\ol{\Tt}_{z}} L P_{\ol{\Tt}_{x}^\bot},\quad 
\ol{u} \eqdef  \ol{L} \Pa{ \ol{L}^\top \ol{L} }^{-1} \tilde{\beta} .
\end{equation}
We have the following result regarding the optimality of AdaDROPS.

\begin{lemma}
\label{lem:unorm_leq1}
    For the vector $\ol{u}$, there holds $\norm{\ol{u}_{J_t}} \leq 1$ for any $t\in\ol{\Ii}_{x}^c$.
\end{lemma}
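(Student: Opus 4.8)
\textbf{Proof plan for Lemma~\ref{lem:unorm_leq1}.}
The plan is to run the AdaDROPS stopping condition in reverse: since $\{\Ii_x^{(k)}\}$ converges to $\ol{\Ii}_x$, no further groups get added at the limit, and I want to turn that fact into the bound $\norm{\ol{u}_{J_t}}\le 1$ for $t\in\ol{\Ii}_x^c$. First I would observe that $\tilde x$ solves the restricted problem \cref{eq:restricted_problem}, so its first-order optimality condition on $\ol{\Tt}_x$ gives a subgradient relation; because $\tilde x_{\ol{\Ee}_x^c}=0$, this reads (after projecting the optimality condition onto $\ol{\Tt}_x$) that $P_{\ol{\Tt}_x}\tilde\beta \in P_{\ol{\Tt}_x}\partial\norm{\ol L\tilde x}_{1,2}$, or equivalently $\ol L^\top \tilde u=\tilde\beta$ on the relevant coordinates for some $\tilde u\in\partial\norm{\ol L \tilde x}_{1,2}$. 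This is exactly the restricted analogue of the global identity $\betasol=L^\top\usol$ used to build the OGN certificate, so the construction of $\ol u$ via the effective lifting operator $\ol L$ (in the sense of \cref{def_L}/\cref{eq:def_u}) is the restricted version of \cref{def:ogn-cert} applied to $\tilde x$.

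Next I would apply \cref{prop:link_two}, or rather its proof, in the restricted setting: the same computation as in \cref{eq:beta_Gt} gives $\norm{\ol u_{J_t}}\le \norm{\tilde\beta_{G_t}}/w_t$ for every $t$. It therefore suffices to show $\norm{\tilde\beta_{G_t}} < w_t$, or at least $\le w_t$, for each $t\in\ol{\Ii}_x^c$. Here is where the convergence of the support estimate enters. If some $t\in\ol{\Ii}_x^c$ had $\norm{\ol u_{J_t}}>1$ (Option~II) — equivalently in the large-$k$ regime $\norm{u^{(k)}_{J_t}}\ge 1$ — then the update rule in line~5 of \cref{algo:adadrops} would add $t$ to $\Ii_x^{(k+1)}$, contradicting $\Ii_x^{(k)}\to\ol{\Ii}_x$ with $t\notin\ol{\Ii}_x$. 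The same argument works verbatim for Option~I using $\norm{\beta^{(k)}_{G_t}}\ge w_t$ and \cref{prop:link_two} to pass back to $\ol u$. Since $x^{(k)}\to\tilde x$ and $A$, $L$, $\widehat L$ on the limiting support are continuous in the iterate (the projections $P_{\Tt_x^{(k)}},P_{\Tt_z^{(k)}}$ stabilize once $\Ii_x^{(k)}=\ol{\Ii}_x$), we have $\beta^{(k)}\to\tilde\beta$ and $u^{(k)}\to\ol u$, so the strict/non-strict inequalities pass to the limit and yield $\norm{\ol u_{J_t}}\le 1$.

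The main obstacle I anticipate is the interface between ``$\{\Ii_x^{(k)}\}$ eventually equals $\ol{\Ii}_x$'' and ``the operators $\widehat L_k$ and the certificate $u^{(k)}$ have stabilized to their limiting forms'': one must check that once $\Ii_x^{(k)}=\ol{\Ii}_x$ for all large $k$, the subspaces $\Tt_x^{(k)},\Tt_z^{(k)}$, hence $A_k,L_k,\widehat L_k$, are \emph{exactly} $\ol{\Tt}_x,\ol{\Tt}_z,\ol A,\ol L,\ol L$ — this is immediate from \cref{eq:support_Ex}--\cref{eq:support_Ez} since $\Ee_x^{(k)}$ and $\Ee_z^{(k)}$ are determined by $\Ii_x^{(k)}$ alone. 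After that, the passage to the limit is just continuity of $\beta\mapsto \widehat L(\widehat L^\top\widehat L)^{-1}\beta$, using that $\widehat L^\top\widehat L$ is a fixed invertible (diagonal, by \cref{lem:hatL_full_rank}) matrix. A secondary subtlety is whether one gets the strict inequality $<1$ or only $\le 1$; the lemma only claims $\le 1$, and that is exactly what survives a limit of conditions of the form ``not ($\ge 1$)'', so no extra nondegeneracy hypothesis is needed. I would also note that if $t$ was never selected because $\norm{u^{(k)}_{J_t}}<1$ held along the tail, then $\le 1$ follows directly without invoking \cref{prop:link_two} at all; \cref{prop:link_two} is only needed to translate an Option~I run into a statement about $\ol u$.
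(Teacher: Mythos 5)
Your proposal is correct and takes essentially the same route as the paper: both arguments rest on the fact that since $t\in\ol{\Ii}_{x}^c$ is never added by the update rule, $\norm{u^{(k)}_{J_t}}<1$ (Option II) or $\norm{\beta^{(k)}_{G_t}}<w_t$ (Option I) holds along the tail, use the proof of \cref{prop:link_two} to convert the Option I condition into a bound on $u^{(k)}_{J_t}$, and then pass to the limit after the supports and hence the operators $A_k, L_k, \widehat{L}_k$ have stabilized, so that $u^{(k)}\to\ol{u}$ and the non-strict bound $\norm{\ol{u}_{J_t}}\leq 1$ survives. Your contradiction framing is simply the contrapositive of the paper's direct limit argument, so there is no substantive difference.
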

\begin{proof}
For any $t \in \ol{\Ii}_{x}^c$, the updating rule implies $\norm{u_{J_t}^{(k)}} < 1$ for {\tt Option II} and $\norm{\tilde{\beta}_{G_t}^{(k)}/w_t} < 1$ for {\tt Option I}. Following the same proof as in \cref{prop:link_two}, we derive the inequality  $\norm{u_{J_t}^{(k)}} \leq \norm{\tilde{\beta}_{G_t}^{(k)}/w_t} < 1$.
Note that after finite iterations, $u_{J_t}^{(k+1)} = \ol{L} \pa{\ol{L}^\top \ol{L}}^{-1} \beta^{(k+1)} \to \ol{u}_{J_t}$, we obtain
$\norm{\ol{u}_{J_t}} \leq 1$ for both options.
\end{proof}

Rely on \cref{lem:unorm_leq1}, next we show that the solution of the restricted problem \cref{eq:restricted_problem} is also a solution of the original problem \cref{eq:main_problem}.

\begin{proposition}
\label{thm:opt_AS}
AdaDROPS \cref{algo:adadrops} is convergent, the output $\tilde{x}$ is a solution of the original problem \cref{eq:main_problem}. 
\end{proposition}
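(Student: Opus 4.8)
The plan is to verify that $\tilde{x}$, the solution to the restricted problem \cref{eq:restricted_problem}, satisfies the first-order optimality condition for the full problem \cref{eq:main_problem}, namely that $\tilde\beta \in \partial\norm{L\tilde{x}}_{1,2}$ pulled back through $L$, i.e. $\tilde\beta = L^\top \tilde u$ for some $\tilde u \in \partial\norm{L\tilde x}_{1,2}$. First I would write down what optimality of $\tilde{x}$ for \cref{eq:restricted_problem} gives us: since the restricted problem only ``sees'' coordinates in $\ol{\Ee}_x$ (and we have imposed $x_{\ol{\Ee}_x^c} = 0$), optimality yields a subgradient relation of the form $\ol{A}^\top(\ol{A}\tilde x - y)/\lambda \in -\partial \norm{\ol{L}\tilde x}_{1,2}$ restricted to $\ol{\Tt}_x$. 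The goal is to promote this to a genuine subgradient of the unrestricted norm. The natural candidate for $\tilde u$ is the lifted certificate built from $\tilde\beta$: on the active groups $t \in \ol{\Ii}_x$ set $\tilde u_{J_t} = \tilde x_{G_t}/\norm{\tilde x_{G_t}}$ (well-defined since these groups are genuinely active in $\tilde x$, by the analogue of \cref{prop:link_two}/\cref{def:ogn-cert}), and on the inactive groups $t \in \ol{\Ii}_x^c$ set $\tilde u_{J_t} = \ol{u}_{J_t}$ using the effective-lifting minimal-norm construction \cref{eq:def_u}.

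Next I would check the two requirements for $\tilde u \in \partial\norm{L\tilde x}_{1,2}$ and the matching identity $L^\top \tilde u = \tilde\beta$. For the subgradient membership: on active groups $\tilde u_{J_t} = \tilde x_{G_t}/\norm{\tilde x_{G_t}}$ is exactly the (unique) gradient required by \cref{eq:subdiff_l2}; on inactive groups we need $\norm{\tilde u_{J_t}} = \norm{\ol{u}_{J_t}} \le 1$, which is precisely \cref{lem:unorm_leq1}. For the identity $L^\top \tilde u = \tilde\beta$: this is where the effective lifting operator $\widehat{L}$ and \cref{rem:widehat_eq} do the work — by construction $\ol{u} = \ol{L}(\ol{L}^\top\ol{L})^{-1}\tilde\beta$ satisfies $\ol{L}^\top \ol{u} = \tilde\beta$ (here $\ol{L}$ in \cref{eq:def_u} denotes the effective lifting operator, abusing the notation slightly), and since $\tilde u$ agrees with $\ol{u}$ off $\ol{\Ee}_z$ while on $\ol{\Ee}_z$ we've replaced the entries by the correct unit gradients, I must argue that this replacement does not disturb $L^\top \tilde u$ on the coordinates that matter. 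Concretely, optimality of $\tilde x$ for the restricted problem forces $L^\top \tilde u$ and $\tilde \beta$ to agree on $\ol{\Ee}_x$, and for coordinates $i \notin \ol{\Ee}_x$ one uses that such $i$ lie only in inactive groups together with \cref{rem:widehat_eq} (the leakage term $(P_{\ol{\Tt}_z} L P_{\ol{\Tt}_x^\bot})^\top \tilde u = 0$) to conclude $(L^\top \tilde u)_i = (\widehat L^\top \tilde u)_i = \tilde\beta_i$; combined with $\norm{\tilde\beta_{G_t}} \le w_t$ from \cref{lem:unorm_leq1}, the full optimality condition \cref{eq:opt-cnd} follows.

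The main obstacle I anticipate is the bookkeeping in the last step: precisely matching $L^\top \tilde u$ with $\tilde\beta$ across the splitting of coordinates into $\ol{\Ee}_x$ versus its complement, and into active versus inactive groups, while keeping track of the two different operators $\ol{L} = L\circ P_{\ol{\Tt}_x}$ (the restriction) and the effective lifting operator of \cref{eq:def_u} (notation collision aside). The subtle point is that the restricted problem is solved with $\ol{A}, \ol{L}$ containing zero columns, so its optimality condition a priori only controls behavior on $\ol{\Tt}_x$; one needs \cref{rem:widehat_eq} and the definition of $\ol{\Ee}_x$ (coordinates touched by inactive groups are excluded) to certify that nothing goes wrong on the complement. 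Once this coordinate-level matching is established, convergence of $\{x^{(k)}\}$ to $\tilde x$ follows from the assumed convergence of \texttt{algorithm} on the eventually-stationary restricted problem (\cref{fact:stable}), and $\tilde\beta \in \partial\norm{L\tilde x}_{1,2}$-pullback together with $\nabla F(\tilde x) = -\lambda\tilde\beta$ gives $0 \in \partial \Phi(\tilde x)$, i.e. $\tilde x$ solves \cref{eq:main_problem}.
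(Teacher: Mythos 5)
Your overall strategy matches the paper's: certify optimality of $\tilde x$ for \cref{eq:main_problem} by exhibiting $\tilde u \in \partial\norm{L\tilde x}_{1,2}$ with $L^\top \tilde u = \tilde\beta$, splitting the construction between the estimated-active region and its complement, using \cref{lem:unorm_leq1} off the support and the machinery of \cref{rem:widehat_eq}/\cref{prop:ogn_certificate} to handle the leakage terms. However, your candidate on the active side has a genuine gap. You set $\tilde u_{J_t} = \tilde x_{G_t}/\norm{\tilde x_{G_t}}$ for every $t \in \ol{\Ii}_x$ and assert this is ``well-defined since these groups are genuinely active in $\tilde x$.'' Nothing guarantees that: AdaDROPS only ever adds groups to the estimate (\cref{fact:stable}), so $\ol{\Ii}_x$ may strictly contain the set of groups actually nonzero at the solution $\tilde x$ of the restricted problem, and for any such group $\tilde x_{G_t} = 0$, your formula is undefined. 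Neither \cref{prop:link_two} nor \cref{def:ogn-cert} rules this out — they concern the certificate at $\xsol$, not the relation between the estimated support and $\supp(\tilde x)$.

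This is not merely a degenerate corner case that can be patched by an arbitrary choice on those groups: for a coordinate $i \in \ol{\Ee}_x$, every group containing $i$ lies in $\ol{\Ii}_x$, and the subgradient entries on estimated-active-but-zero groups do contribute to $(L^\top u)_i = \tilde\beta_i$ in the restricted optimality condition. If you overwrite (or cannot define) those entries, the identity $P_{\ol{\Tt}_x} L^\top \tilde u = P_{\ol{\Tt}_x}\tilde\beta$ that you later invoke can fail. The paper sidesteps exactly this by not using normalized vectors of $\tilde x$ at all on the active side: it takes the subgradient $u$ supplied by the optimality condition of \cref{eq:restricted_problem} and sets $\tilde u_{\ol{\Ee}_z} = (P_{\ol{\Tt}_L} u)_{\ol{\Ee}_z}$, which keeps precisely the entries of $u$ needed to preserve $P_{\ol{\Tt}_x}\tilde\beta = P_{\ol{\Tt}_x}L^\top u$ (including those on zero groups inside the estimate), while zeroing only entries whose removal is harmless by \cref{lemma:proj}; the OGN-style vector $\ol u$ is used only on $\ol{\Ee}_z^c$. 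To repair your argument you would either need to prove that every group in $\ol{\Ii}_x$ is nonzero at $\tilde x$ (false in general), or replace your active-side definition by the paper's projected-subgradient construction.
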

\begin{proof}
Since $\tilde{x}$ is a solution of the restricted problem \cref{eq:restricted_problem} and $\tilde{x}_{\ol{\Ee}_{x}^c}=0$ by our configuration, its optimality condition yields
\[
\begin{aligned}
- \sfrac{1}{\lambda} \ol{A}^\top ( \ol{A} \tilde{x} - y)  
=- \sfrac{1}{\lambda} \ol{A}^\top ( A \tilde{x} - y)  
\in \ol{L}^\top \partial \norm{\ol{L}\tilde{x}}_{1,2} = \ol{L}^\top \partial \norm{L\tilde{x}}_{1,2}  .
\end{aligned}
\]
Hence, there exists ${{u}} \in \partial \norm{L\tilde{x}}_{1,2} $ such that $- \frac{1}{\lambda} \ol{A}^\top ( A \tilde{x} - y)  = \ol{L}^\top {{u}} $. By the definition of $\tilde{\beta}$ \cref{eq:def_u}, there holds $P_{\Tt_x} \tilde{\beta} = P_{\Tt_x} L^\top {{u}}$.
To show $\tilde{x}$ is a solution of \cref{eq:main_problem}, by the optimality condition, it means 
\begin{equation}\label{eq:tilde_beta}
\tilde{\beta} =  - \sfrac{1}{\lambda}  A^\top ( A \tilde{x} - y)  
\in  L^\top \partial \norm{L\tilde{x}}_{1,2}  .
\end{equation}
To prove \cref{eq:tilde_beta}, it suffices to constructing a dual variable $\tilde{u} \in \RR^p$ such that 
$\tilde{u} \in \partial \norm{L\tilde{x}}_{1,2}$ and $
L^\top \tilde{u} = \tilde{\beta}$. 
%
Given $\ol{\Ii}_x$, denote $\ol{\Tt}_L$ according to \cref{eq:support_EL}.
We choose $\tilde{u}$ by
\begin{equation}\label{eq:tilde_u}
\tilde{u}_{\ol{\Ee}_{z}} =  \pa{ P_{\ol{\Tt}_{L}} {{u}} }_{\ol{\Ee}_{z}}  
    \qandq
    \tilde{u}_{\ol{\Ee}_{z}^c} = \ol{u}_{\ol{\Ee}_{z}^c} ,
\end{equation}
and verify that it satisfies the requirement. 
Note that there holds
\begin{align*}
P_{\ol{\Tt}_{x}^\bot} L^\top P_{\ol{\Tt}_{z}} \tilde{u} 
&=  
P_{\ol{\Tt}_{x}^\bot} L^\top P_{\ol{\Tt}_{z}} P_{\ol\Tt_{L}} {{u}} \doubleslash{\cref{eq:tilde_u}}  \\
&=  
P_{\ol{\Tt}_{x}^\bot} L^\top P_{\ol\Tt_{L}} {{u}} \doubleslash{$\ol{\Tt}_{L} \subseteq \ol{\Tt}_{z}$, \cref{lemma:proj} (a)} \\
&= 0 . \doubleslash{\cref{lemma:proj} (b)} 
\end{align*} 
%
The proof follows the same argument as that of \cref{prop:ogn_certificate}, yielding $L^\top \tilde{u} = \tilde{\beta}$.

Next we show $\tilde{u} \in \partial \norm{L\tilde{x}}_{1,2}$, which consists of two parts due to the definition of $\tilde{u}$
\begin{itemize}
    \item[i).]  We have $P_{\ol\Tt_L}L\tilde x = 0$ since $\tilde x \in \ol{\Tt}_x$. Recall that ${{u}} \in \partial \norm{L\tilde x}_{1,2}$, hence $P_{\ol\Tt_L}{{u}} \in \partial\norm{L\tilde x}_{1,2}$. 
    As $\ol\Tt_L \subseteq \ol\Tt_{z}$ (\cref{lemma:proj} \BLUE{(a)}), by the definition of $\tilde{u}$ we have
    \[
    P_{\ol\Tt_{z}} \tilde{u}
    = P_{\ol\Tt_{L}} {{u}}
    = P_{\ol\Tt_{z}} P_{\ol\Tt_{L}} {{u}} \in  P_{\ol\Tt_{z}} \partial \norm{L\tilde x}_{1,2} .
    \]
    \item[ii).] For $t \in \ol{\Ii}_{x}^c$, $\norm{\tilde{u}_{J_t}} \leq 1$ by \cref{lem:unorm_leq1}. 
\end{itemize}
In summary, we get $\tilde{u} \in \partial \norm{L\tilde{x}}_{1,2}$ which concludes the proof.  
\end{proof}

\subsection{Related work}\label{sec:related_work} 
In this part, we provide a brief summary of existing dimension reduction techniques for overlapping group sparsity and highlight the difference of our approach. More precisely, we discuss the adaptive dimension reduction for sparse optimization \cite{yuan2025adaptive,jenatton2011structured}, efficient proximal operator computation of overlapping group sparsity (``FoGLASSO'') \cite{yuan2011efficient}, and screening rules for overlapping group sparsity \cite{lee2014screening}. 

\paragraph{Adaptive dimension reduction \cite{yuan2025adaptive, jenatton2011structured}}
From the optimality condition \cref{eq:opt-cnd}, given any $x\in\RR^n$, define  
\begin{equation}
\label{eq:KKT_lasso}
   \Rr(x) \eqdef  x - \mathrm{prox}_{ R }\Pa{ x - \sfrac{1}{\lambda} A^\top (Ax -y) }  ,
\end{equation}
which is called the {\it KKT residual}. 
Based on $\Rr(x)$, \cite{yuan2025adaptive} proposed an adaptive sieving (AS) scheme to achieve dimension reduction: at each iteration, entries satisfying $|R(x)_i| > 0$ are added to an active set, and a reduced problem is solved over this set. However, AS requires the explicit calculation of the proximal operator $\mathrm{prox}_{ R }$, which excludes the case of overlapping group norm. 
In \cite{jenatton2011structured}, the authors propose an active-set strategy for structured overlapping group sparsity, which updates the approximate support by checking the duality gap. However, their objective differs from \cref{eq:main_problem} as they consider squared group norm.

\paragraph{The FoGLASSO \cite{yuan2011efficient}}

FoGLASSO leverages \cref{def:lasso-cert} to identify the support in the special case $A = \Id$. Specifically, they show that if $\norm{y_{G_i}} < \lambda w_i$, then $x^{\star}_{G_i} = 0$. 
This property is plugged into the computation of the proximal operator of the group norm, achieving dimension reduction and improved efficiency. 
However, it is unclear if their zero group identification rule extends to $A \neq \Id$.

\paragraph{Safe screening \cite{lee2014screening}}
Recall \cref{eq:pre_certificate}, group $t \in \dbrack{\Nn}$ is zero if $\norm{\usol_{J_t}} < 1$, which is
\begin{equation*}
    \norm{\usol_{J_t}}^2 
    = \msum_{i \in G_t} (\betasol_i - \ssum_{k\in \mathscr{K}_i\setminus \ba{k_i}, j = \phi(k) } w_j {{\usol_k}})^2 /w_t < 1.
\end{equation*} 
To estimate the summand, one needs to know the value of $\usol_k$, our approach is splitting the entries in $\usol_{J_t}$ into two parts: the non-zero part of $\usol_k$ has the same sign as $\betasol_i$, yielding the inequality. 
In \cite{lee2014screening}, for zero groups the authors propose to compute the minimum of the summand above as the subdifferential is set-valued, leading to the following condition
\begin{equation*}
    \min_{u\in\RR^p,\forall s \in \dbrack{\Nn} \setminus \ba{t}, \norm{u_{J_s}} \leq 1} \sqrt{\ssum_{i \in G_t}(\betasol_i - \ssum_{k\in \mathscr{K}_i\setminus \ba{k_i}, j = \phi(k)  }w_j u_k)^2} < w_t  \Longrightarrow x_{G_t}^{\star} = 0.
\end{equation*}
Note that $\norm{\betasol_{G_t}}$ is an upper bound of the minimization problem as it corresponds to taking $u_k=0$. 
Consequently, a static safe screening rule is designed in \cite{lee2014screening}. 
We remark that their result mainly focuses on reducing the influence caused by the groups completely inclusive of another group (mainly addressing the case of sparse overlapping group LASSO), while neglecting other types of overlapping. Specifically, they seek an upper bound for
\begin{equation*}
    \min_{u\in\RR^p,\forall s \in \{ s\in \dbrack{\Nn} \setminus \ba{t}, G_s \cap G_t \neq \varnothing, G_s \subseteq G_t \}, \norm{u_{J_s}} \leq 1} \sqrt{\ssum_{i \in G_t}(\betasol_i - \ssum_{k\in \mathscr{K}_i\setminus \ba{k_i}, j = \phi(k) }w_j u_k)^2}.
\end{equation*}
In contrast, our approach works for the general overlapping cases.

\section{Applications}
\label{sec:applications}

In this section, we discuss the choices of {\tt algorithm}. 
As the proximal operator of overlapping group norm has no closed form expression, choices to alleviate this difficulty include Primal--Dual splitting method \cite{chambolle2011first}, alternating direction method of multipliers \cite{gabay1983chapter}, and variable projection \cite{poon2023smooth} which is a recently developed versatile numerical scheme for sparse optimization. 
In what follows we first provide brief introduction of these methods, highlighting their computational complexity, then demonstrate how to combine them with the proposed AdaDROPS.

\subsection{Primal-Dual splitting method}

Given a function $R:\RR^n \to \RR$, its convex conjugate is defined as $R^*(\psi) = \sup_{x} x^\top \psi - R(x)$. Moreover, when $R$ is proper closed and convex, its biconjugate equals itself, i.e. $R(x) = \sup_{\psi} \psi^\top x - R^*(\psi)$. Plugging this into~\cref{eq:main_problem} we obtain the following saddle-point problem 
\begin{equation}\label{eq:saddle-point}
\min_{x\in\RR^n}\max_{\psi\in\RR^p} \sfrac{1}{2\lambda}\norm{Ax-y}^2 + \iprod{Lx}{\psi} - \iota_{\Omega}(\psi) ,
\end{equation}
where $\Omega = \Ba{\psi\in\RR^p \mid \norm{\psi_{J_i}}_2 \leq 1,~ i\in\dbrack{\Nn}}$ and $\iota_{\Omega}(\cdot)$ is the indicator function of $\Omega$.

In the literature, a rich class of Primal-Dual splitting methods is developed to solve the saddle-point problem, see for instance \cite{esser2010general,chambolle2011first,vu2011splitting} and the references therein.

\begin{algorithm}[H]
    \renewcommand{\algorithmicrequire}{\textbf{Input:}}
	\renewcommand{\algorithmicensure}{\textbf{Output:}}
    \small
    \caption{Primal-Dual splitting method \cite{chambolle2011first}}
    \begin{algorithmic}[1]
    \label{algo:primal-dual}
        \REQUIRE  initial point $x^{(0)}\in\RR^{n}, \psi^{(0)}\in\RR^{p}$, stepsize $\sigma, \tau>0$ such that $\sigma\tau \norm{L}^2 < 1$.
        \ENSURE $ x^{(k)}$
        \WHILE{not converged}\vspace{1mm}
            \STATE $\xkp = \pa{\lambda \Id_n + \sigma A^\top  A }^{-1}\pa{\lambda \xk -\lambda  \sigma L^\top \psi^{(k)} + \sigma A^\top y } $. \doubleslash{primal update}\vspace{1mm}
            
            \STATE $\bar{x}^{(k+1)} = 2\xkp -\xk $; \doubleslash{extrapolation}\vspace{1mm}
            
            \STATE $\psi^{(k+1)} = P_{\Omega} \pa{ \psi^{(k)} + \tau L \bar{x}^{(k+1)} }$. \doubleslash{dual update}
        \ENDWHILE
    \end{algorithmic}
\end{algorithm}

\begin{remark}[Convergence and complexity]
The above iteration can be written as an instance of proximal point algorithm, hence its convergence is guaranteed \cite{vu2011splitting}. 
As $L$ is sparse, the complexity of the matrix-vector products involving $L$ is $O(p)$ which is negligible. For the primal update, we need to compute a matrix inversion and matrix-vector product. Note that when $m < n$, the matrix inversion in \cref{algo:primal-dual} {\tt Line 2} can be computed by inverting the following smaller $m \times m$ matrix according to the Sherman--Morrison--Woodbury formula \cite{golub2013matrix},
\begin{equation*}
    (\lambda \Id_n + \sigma A^\top A)^{-1} = \sfrac{1}{\lambda} \Id_n - \sfrac{1}{\lambda} A^\top \Pa{\sfrac{\lambda}{\sigma} \Id_m + A A^\top}^{-1} A.
\end{equation*}
The Cholesky decomposition of $\lambda \Id + \sigma A^\top  A$ and $\sfrac{\lambda}{\sigma} \Id_m + A A^\top$ can be precomputed. Consequently, the computational cost of \cref{algo:primal-dual} is $O(mn^2 + n^3) +O( kn^2)$ when $m\geq n$, where $k$ denotes the number of iteration needed for convergence. When $m<n$, the complexity shifts to $O(m^2 n+m^3 )+O( kmn)$. Alternatively, the linear system can be solved by preconditioned conjugate gradient method, where the complexity depends on the condition number of the matrix.
\end{remark}

\subsection{Alternating direction method of multipliers}

Another widely adopted approach to handle overlapping group norm is introducing auxiliary variables and solving with method of multipliers. More precisely, problem \cref{eq:main_problem} is equivalent to the following constrained problem
\[
\begin{aligned}
\min_{x\in\RR^n, z\in\RR^p} ~& \sfrac{1}{2\lambda}\norm{Ax-y}^2 + \norm{z}_{1,2} \\
{\rm such~that}~& z = L x ,
\end{aligned}
\]
whose augmented Lagrange function reads
\begin{equation*}
\mathcal{L}_{\tau}(x,z, \psi)
\eqdef 
\sfrac{1}{2\lambda} \norm{Ax - y}_2^2 + \norm{z}_{1,2} + \langle\psi, z - Lx \rangle + \sfrac{\tau}{2} \norm{z - Lx}_2^2 .
\end{equation*}
Alternating direction method of multipliers (ADMM) \cite{glowinski1975approximation,gabay1983chapter} applies Gauss--Seidel updating rules to the three variables, leading to \cref{algo:admm}.

\begin{algorithm}[H]
    \renewcommand{\algorithmicrequire}{\textbf{Input:}}
	\renewcommand{\algorithmicensure}{\textbf{Output:}}
    \small
    \caption{Alternating direction method of multipliers \cite{gabay1983chapter}}
    \begin{algorithmic}[1]
    \label{algo:admm}
        \REQUIRE  initial point $x^{(0)}\in\RR^{n}, \psi^{(0)}\in\RR^{p}$, $\tau>0$.
        \ENSURE $ x^{(k)}$
        \WHILE{not converged}\vspace{1mm}
            \STATE $\xkp = \pa{ A^\top A + \lambda\tau L^\top L }^{-1} \Pa{ A^\top y + \lambda L^\top (\psik + \tau \zk) } $. \doubleslash{primal update}\vspace{1mm}
            
            \STATE $\zkp = \mathrm{prox}_{\frac{1}{\tau} \norm{\cdot}_{1,2}} \pa{ L \xkp - \sfrac{1}{\tau} \psik }$; \doubleslash{primal update}\vspace{1mm}
            
            \STATE $\psikp = \psik + \tau \pa{ \zkp - L\xkp }$. \doubleslash{dual update}
        \ENDWHILE
    \end{algorithmic}
\end{algorithm}

\begin{remark}[Convergence and complexity]
Since $L$ has full column-rank, \cref{algo:admm} is sequence convergent \cite{eckstein1992douglas}. 
In terms of computational complexity, ADMM has the same as that of \cref{algo:primal-dual}.
\end{remark}

\subsection{Variable projection}\label{sec:varpro}

The last method to introduce is the variable projection (VarPro) \cite{poon2021smooth,poon2023smooth}, which is developed based on Hadamard overparameterization.

\subsubsection{Hadamard overparameterization}\label{sec:hadamard}

Continue from \cref{sec:overlapping_group_norm},  
let $u\in\RR^p, v\in\RR^{\Nn}$ and $\Jj$ be an $\Nn$-group partition of $\dbrack{p}$. 
The Hadamard product of $u, v$, denoted by $u\odot_\Jj v$, is a vector in $\RR^p$ defined by
\[
u \odot_\Jj v = \pa{ u_{J_i} \times {v}_{i} }_{i=1}^{\Nn} . 
\]
We have the following variational form of overlapping group norm. 

\begin{definition}
The Hadamard overparameterization of the group norm is 
\begin{equation}\label{eq:Hadamard-Lx}
    \msum_{i\in\dbrack{\Nn}} w_i \norm{x_{G_i}}
    = \min\limits_{u\in\RR^p,\, v\in\RR^{\Nn}} \left\{ \sfrac{1}{2} \norm{u}^2 + \sfrac{1}{2} \norm{v}^2 ,~~ L x = u\odot_\Jj v \right\}  . 
\end{equation}
For each group $J_i\in\Jj$, the minimum is obtained for 
\[
u_{J_i} = {L_{G_i}x} /{\sqrt{\norm{L_{G_i}x}}}
\qandq
v_i = \sqrt{ \norm{ L_{G_i} x } }  . 
\]
\end{definition}

With overparameterization, the original problem \cref{eq:main_problem} can be rewritten into the  constrained form
\begin{equation}
\label{eq:Guv}
\min_{v\in\RR^{\Nn},\, u\in\RR^{p},\, x\in\RR^{n}} \left\{ \sfrac{1}{2} \norm{u}^2 + \sfrac{1}{2} \norm{v}^2 + \sfrac{1}{2\lambda} \norm{Ax-y}_2^2,~~ Lx = u\odot_{\Jj} v\right\} ,
\end{equation}
which now is smooth but nonconvex due to the bilinear term.

\subsubsection{Variable projection}

Though smooth, \cref{eq:Guv} is not easy to solve as it is bilevel and constrained. 
In \cite{poon2023smooth}, based on the variable projection (VarPro) technique \cite{golub1973differentiation,golub2003separable}, \cref{eq:Guv} is further transformed to a bilevel problem
\begin{equation}
\label{eq:varpro:projected}
\begin{aligned}
    \min_{v\in\RR^{\Nn}}~& f(v), \\
    \text{ where }~& f(v) \triangleq \min_{u\in\RR^{p},\, x\in\RR^{n}}\bBa{ \sfrac{1}{2} \norm{u}^2 + \sfrac{1}{2} \norm{v}^2 +  \sfrac{1}{2\lambda} \norm{Ax-y}_2^2,~~ Lx = u \odot_{\Jj} v } ,
    \end{aligned}
\end{equation}
where $f(v)$ is called the \emph{projected function}, and $v$ is the \emph{projected variable}.
Note that the lower-level problem is convex in $x,u$ with quadratic objective and linear constraint, we can consider its dual problem \cite{poon2023smooth}.

\begin{lemma}[{\cite{poon2023smooth}}]
    Problem \cref{eq:varpro:projected} is equivalent to the following bilevel problem
    \begin{equation}
    \label{eq:min_f}
    \begin{aligned}
        \min_{v \in \RR^{\Nn}}~ f(v), \quad
        \mathrm{where}~& f(v) = \max\limits_{\alpha\in\RR^m,\, \xi\in \mathbb{R}^p} \sfrac{1}{2} \norm{v}^2 - \sfrac{1}{2} {\norm{\xi\odot_{\Jj} {v}}^2} - \sfrac{\lambda}{2} \norm{\alpha}^2 - \iprod{\alpha}{y} , \\
        &\qquad  \mathrm{such~that}\quad L^\top \xi + A^\top \alpha = 0 . 
        \end{aligned}
    \end{equation}
    Given $v\in\RR^{\Nn}$, let $(\alpha,\xi)$ be a solution of the lower-level problem, then 
    \begin{itemize}
        \item There exists $x\in\RR^n$ such that
    \begin{equation}
    \label{eq:three}
        \lambda \alpha = Ax-y ,\quad {Lx = \xi\odot_{\Jj} {v}^2} \qandq L^\top \xi + A^\top \alpha = 0.
    \end{equation}

        \item The gradient of the upper-level projected function reads
            \[
                \nabla f(v) = v- v \odot (\norm{\xi_{J_i}}^2)_{i=1}^{\Nn}  . 
            \]
    \end{itemize}
    
\end{lemma}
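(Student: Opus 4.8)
The plan is to establish the equivalence by taking the Lagrangian dual of the lower-level problem in \cref{eq:varpro:projected}, then read off the optimality conditions. First I would fix $v \in \RR^{\Nn}$ and look at the inner minimization over $(u,x)$: the objective $\frac{1}{2}\norm{u}^2 + \frac{1}{2}\norm{v}^2 + \frac{1}{2\lambda}\norm{Ax-y}^2$ is convex quadratic in $(u,x)$, and the constraint $Lx = u\odot_{\Jj} v$ is linear in $(u,x)$, so strong duality holds (Slater is trivial since the constraint set is an affine subspace). Introducing a multiplier $\xi \in \RR^p$ for the constraint, the Lagrangian is $\frac{1}{2}\norm{u}^2 + \frac{1}{2}\norm{v}^2 + \frac{1}{2\lambda}\norm{Ax-y}^2 + \iprod{\xi}{u\odot_{\Jj} v - Lx}$. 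I would minimize over $u$ and $x$ separately.

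For the $u$-minimization: since $u\odot_{\Jj}v$ is linear in $u$, we have $\iprod{\xi}{u\odot_{\Jj}v} = \iprod{\xi\odot_{\Jj}v}{u}$ (using the structure of $\odot_{\Jj}$: on block $J_i$ this pairing is $v_i\iprod{\xi_{J_i}}{u_{J_i}}$), so minimizing $\frac12\norm{u}^2 + \iprod{\xi\odot_{\Jj}v}{u}$ gives $u = -\xi\odot_{\Jj}v$ with minimum value $-\frac12\norm{\xi\odot_{\Jj}v}^2$. For the $x$-minimization: minimizing $\frac{1}{2\lambda}\norm{Ax-y}^2 - \iprod{\xi}{Lx} = \frac{1}{2\lambda}\norm{Ax-y}^2 - \iprod{L^\top\xi}{x}$ over $x$ — this is finite only if $L^\top\xi \in \mathrm{range}(A^\top)$, i.e. there is $\alpha$ with $A^\top\alpha = -L^\top\xi$ (which is exactly the constraint $L^\top\xi + A^\top\alpha = 0$); introducing $\alpha$ as the variable for this residual and completing the square yields the $-\frac{\lambda}{2}\norm{\alpha}^2 - \iprod{\alpha}{y}$ terms. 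Assembling these gives precisely the dual objective in \cref{eq:min_f}. The relations in \cref{eq:three} then come from the stationarity/primal-feasibility conditions: $u = -\xi\odot_{\Jj}v$ combined with primal feasibility $Lx = u\odot_{\Jj}v = -\xi\odot_{\Jj}v^2$ — here I would need to double check the sign convention on $\xi$ (the statement has $Lx = \xi\odot_{\Jj}v^2$, so the multiplier is taken with the opposite sign, which just flips $\xi\mapsto-\xi$ throughout and is harmless), the stationarity in $x$ gives $\frac{1}{\lambda}A^\top(Ax-y) = L^\top\xi$, hence $\lambda\alpha = Ax-y$ with $\alpha$ the normalized residual, and the dual feasibility is $L^\top\xi + A^\top\alpha = 0$.

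For the gradient formula, I would invoke Danskin's theorem / the envelope theorem: since $f(v) = \max_{(\alpha,\xi)}$ of a function jointly smooth in all variables with the maximizer attained, $\nabla f(v)$ equals the partial derivative of the dual objective with respect to $v$ evaluated at the optimal $(\alpha,\xi)$. Differentiating $\frac12\norm{v}^2 - \frac12\norm{\xi\odot_{\Jj}v}^2$ in $v$: the first term gives $v$, and the second gives, blockwise on coordinate $i$, $-v_i\norm{\xi_{J_i}}^2$, i.e. $-v\odot(\norm{\xi_{J_i}}^2)_{i=1}^{\Nn}$. Summing gives $\nabla f(v) = v - v\odot(\norm{\xi_{J_i}}^2)_{i}$, as claimed.

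The main obstacle I anticipate is not any single calculation but the bookkeeping around $\odot_{\Jj}$ and the sign/normalization conventions: verifying the adjoint identity $\iprod{\xi}{u\odot_{\Jj}v} = \iprod{\xi\odot_{\Jj}v}{u}$ carefully block by block, and making sure the multiplier sign is chosen consistently so that the final relations \cref{eq:three} come out with the stated signs (in particular $Lx = \xi\odot_{\Jj}v^2$ rather than $-\xi\odot_{\Jj}v^2$). A secondary point requiring care is justifying that the inner max in \cref{eq:min_f} is actually attained (so that Danskin applies and the gradient formula is valid) — this should follow from strong duality together with the fact that the lower-level problem in \cref{eq:varpro:projected} has a solution, but it is worth stating explicitly rather than glossing over. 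Since this lemma is attributed to \cite{poon2023smooth}, I would expect the paper to cite that reference for the detailed verification rather than reproduce it in full.
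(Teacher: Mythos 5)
Your proposal is correct: the paper does not reproduce a proof of this lemma but defers it to \cite{poon2023smooth}, and your Lagrangian-duality derivation (with the sign of the multiplier flipped so that $Lx=\xi\odot_{\Jj}v^2$ and $L^\top\xi+A^\top\alpha=0$ come out as stated) together with the Danskin/envelope argument for $\nabla f$ is exactly the standard route taken there. The only point worth making explicit, as you note, is attainment of the inner maximum and the behaviour when some $v_i=0$ (where $\xi_{J_i}$ need not be unique, though the gradient component is then $0$ regardless), which the cited reference handles in establishing smoothness of $f$.
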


The above result implies we can apply descent methods, e.g. gradient descent or quasi-Newton method, to solve \cref{eq:min_f}. 
To solve the lower-level problem \cref{eq:three}, denote $D_v = \diag\pa{ \bm{1}_{p} \odot_{\Jj} (1/ v^2) }  \in \RR^{p\times p}$, and $W = L^\top D_v  L  \in \RR^{n\times n} $ which is diagonal.
Then we have two choices solving \cref{eq:three}
\begin{itemize}
    \item If $n\leq m$, 
\begin{equation}
\label{eq:ogl:1}
\pa{ A^\top A + \lambda W}  x = A^\top y ,\quad  \alpha = \pa{ Ax-y } /\lambda .
\end{equation}

\item If $m< n$, by the Sherman--Morrison--Woodbury formula \cite{golub2013matrix},
\begin{equation}
\label{eq:ogl:2}
\pa{\lambda \Id_m +AW^{-1} A^\top}  \alpha = -y ,\quad x = -W^{-1}A^\top \alpha.
\end{equation}

\end{itemize}
For both cases, we have $\xi = D_v Lx$. 
 
Assemble the above elements, an example of gradient descent solving \cref{eq:varpro:projected} is provided in \cref{algo:standard:varpro}. 
\begin{remark}[Convergence and complexity]\label{remark:complexity-varpro}
The convergence of VarPro is guaranteed according to \cite{poon2023smooth}. 
The complexity lies in again the matrix inversion and matrix-vector/matrix product. 
\end{remark}

\begin{algorithm}[H]
    \renewcommand{\algorithmicrequire}{\textbf{Input:}}
	\renewcommand{\algorithmicensure}{\textbf{Output:}}
    \small
    \caption{Gradient descent based VarPro \cite{poon2023smooth}}
    \begin{algorithmic}[1]
    \label{algo:standard:varpro}
        \REQUIRE  initial point $v^{(0)}\in\RR^{\Nn}$.
        \ENSURE $ x^{(k)}$
        \WHILE{not converged}
            \STATE \makecell[tl]{Lower-level updates\\[1mm]
            $\quad D_k = \diag\Pa{ \bm{1}_{p} \odot_{\Jj} (1/ (v^{(k)})^2) } , ~W_k = L^\top D_k L$. \\[1mm]
            \qquad\quad {\bf Case $n \leq m$:} $x^{(k)} = \pa{ A^\top A + \lambda W_k }^{-1} A^\top y$; \\[1mm]
            \qquad\quad {\bf Case $m < n$:} $ \alpha^{(k)} = -\pa{ \lambda \Id_m +AW_k^{-1} A^\top }^{-1}y ,~ x^{(k)} = -W_k^{-1}A^\top\alpha^{(k)}$; \\[1mm]
            $\quad \xi^{(k)} = D_kL x^{(k)}$; \vspace{1mm} 
            }

            \STATE \makecell[tl]{
            Upper-level updates\\[1mm]
            $
            \begin{aligned}
                \quad g^{(k)} &= v^{(k)} - v^{(k)} \odot  (\norm{\xi^{(k)}_{J_i}}^2)_{i=1}^{\Nn} \doubleslash{Gradient $\nabla f(\vk)$} , \\
                \quad v^{(k+1)} &= v^{(k)} - \gamma^{(k)}  g^{(k)} . \doubleslash{$\gamma^{(k)}$ can be determined by line search.}
            \end{aligned}
            $
            }\\[1mm]

        \ENDWHILE
    \end{algorithmic}
\end{algorithm}

\subsection{Complexity reduction via AdaDROPS}

The above algorithms can be easily plugged into \cref{algo:adadrops}, 
instead of working on the full dimension problem \cref{eq:main_problem}, in each step of AdaDROPS, the {\tt algorithm} solves \cref{eq:restricted_problem_k}, which essentially boils down to the update of $\xkp$. 
In the following, we use ADMM \cref{algo:admm} to demonstrate how to compute $\xkp$.

Denote $r^{(k+1)} =  A_{k}^\top y + \lambda L_{k}^\top (\psik + \tau \zk)$. Apparently, $r^{(k+1)} \in \Tt_x^{(k)}$ owing to the definition of $A_{k}, L_{k}$. 
The update of $x^{(k+1)}$ in \cref{algo:admm} is equivalent to
\begin{equation}
\label{eq:ADMM_x}
    (A_k^\top A_k + \lambda \tau L^\top_k L_k) x^{(k+1)} = r^{(k+1)} ,
\end{equation}
which means we need to invert $(A_k^\top A_k + \lambda \tau L^\top_k L_k)$ over $\Tt_{x}^{(k)}$. 
Denote $A_{\Ee_{x}}, L_{\Ee_{x}}$ and $\xkp_{\Ee_x}, r^{(k+1)}_{\Ee_x}$ the restriction of $A,L$ (columns) and $\xkp, r^{(k+1)}$ on $\Ee_{x}^{(k)}$, then
\begin{equation*}
  x^{(k+1)}_{\Ee_{x}} = \Pa{A_{\Ee_{x}}^\top A_{\Ee_{x}} + \lambda \tau L_{{\Ee_{x}}}^\top L_{\Ee_{x}}}^{-1} r^{(k+1)}_{\Ee_x} . 
\end{equation*}
Letting $x^{(k+1)}_{\Ee_{x}^c} = 0$ yields the whole $\xkp$. 
If $m < |\Ee_x|$, then Sherman--Morrison--Woodbury formula can be applied: let $H = (L_{\Ee_x}^\top L_{\Ee_x})^{-1}$
\begin{align*}
    x^{(k+1)}_{\Ee_{x}} 
    =& \sfrac{1}{\lambda \tau}H  \Pa{ r^{(k+1)}_{\Ee_x} - A_{\Ee_x}^\top (\lambda \tau \Id_m + A_{\Ee_x}H A_{\Ee_x}^\top )^{-1} A_{\Ee_x}H r^{(k+1)}_{\Ee_x} } .
\end{align*}
See \cref{algo:admm-adadrops} below for the whole procedure of ADMM with AdaDROPS, for the case of $|\Ee_x| \leq m$.

\begin{algorithm}[H]
    \renewcommand{\algorithmicrequire}{\textbf{Input:}}
	\renewcommand{\algorithmicensure}{\textbf{Output:}}
    \small
    \caption{AdaDROPS for ADMM}
    \begin{algorithmic}[1]
    \label{algo:admm-adadrops}
        \REQUIRE  {initial estimated support $\Ii_{x}^{(0)}$, restricted operators  $A_{0} = A\circ P_{\Tt_{x}^{(0)}}$, $L_{0} = L \circ P_{\Tt_{x}^{(0)}}$ and $\widehat{L}_{0} = L   - P_{\Tt_{z}^{(0)}} L P_{\pa{\Tt_{x}^{(0)}}^\bot}$. }
        \ENSURE $\xkp$\\[1mm]
        \WHILE{not converged}\vspace*{1mm}
            \STATE {\makecell[tl]{ADMM updates:\\[1mm] 
            $
            \begin{aligned}
            \qquad r^{(k+1)} &=  A_{k}^\top y + \lambda L_{k}^\top (\psik + \tau \zk),\\
            \qquad \xkp_{\Ee_x} &= \Pa{A_{\Ee_{x}}^\top A_{\Ee_{x}} + \lambda \tau L_{{\Ee_{x}}}^\top L_{\Ee_{x}}}^{-1} r^{(k+1)}_{\Ee_x} ,\quad x^{(k+1)}_{\Ee_x^c} = 0, \\
            \qquad \zkp &= \mathrm{prox}_{\frac{1}{\tau} \norm{\cdot}_{1,2}} \Pa{ L_{k} \xkp - \sfrac{1}{\tau} \psik } , \\
            \qquad \psikp &= \psik + \tau \pa{ \zkp - L_{k}\xkp } ,
            \end{aligned}
            $}}\\[1mm]

            \STATE {\makecell[tl]{Support updates: \cref{algo:adadrops} {\tt Line 3 - Line 5}. 
            }
            }\\[1mm]

            \STATE Subspace and operators updates: \cref{algo:adadrops} {\tt Line 6} \\[1mm]
        \ENDWHILE
    \end{algorithmic}
\end{algorithm}

As mentioned in \cref{remark:overhead} the computational overhead of AdaDROPS is negligible, hence we mainly focus on the computational complexity reduction. 
The main computation complexity of the three algorithms lies in the update of $\xkp$, which involves matrix inversion and matrix-vector/matrix multiplication. 
When both $m$ and $n$ are large, precomputing the linear system and its Cholesky decomposition may lead to memory issues. Hence, we analyze the complexity reduction of AdaDROPS in two scenarios: with ({\tt w.}) and without ({\tt w.o.}) precomputing the inversion/decomposition.
The comparison is summarized in \cref{tab:admm_complexity}.

\begin{table}[!htb]
\centering
\small
\caption{Complexity comparison between ADMM  and ADMM+AdaDROPS in $k$-th iteration, let $\kappa_k = |\Ee_x^{(k)}|$ be the problem dimension in $k$-th iteration.}\vspace*{-1ex}
\tabulinesep=.75mm
\begin{tabu}{|c|c|c|c|}
\hline
case                                           &                 & ADMM          & ADMM+AdaDROPS                            \\ \hline
\multirow{2}{*}{$m \geq n \geq \kappa_k$} & {\tt w.}     & $O(n^2)$      & $O(\kappa_k^2)$                     \\ \cline{2-4} 
                                               & {\tt w.o.} & $O(n^2(m+n))$ & $O(\kappa_k^2(m + \kappa_k)) $ \\ \hline
\multirow{2}{*}{$n > m \geq \kappa_k$}    & {\tt w.}     & $O(mn)$       & $O(\kappa_k^2)$                     \\ \cline{2-4} 
                                               & {\tt w.o.} & $O(m^2(m+n))$ & $O(\kappa_k^2(m + \kappa_k)) $ \\ \hline
\multirow{2}{*}{$n \geq \kappa_k\geq m$}  & {\tt w.}     & $O(mn)$       & $O(m\kappa_k)$                      \\ \cline{2-4} 
                                               & {\tt w.o.} & $O(m^2(m+n))$ & $O(m^2(m+\kappa_k))$                \\ \hline
\end{tabu}
\label{tab:admm_complexity}
\end{table}

\begin{remark}
The above complexity reduction also holds for Primal--Dual method.
\end{remark}

\begin{table}[!htb]
\centering
\small
\caption{Complexity comparison between VarPro and VarPro+AdaDROPS in $k$-th iteration, let $\kappa_k = |\Ee_x^{(k)}|$ be the problem dimension in $k$-th iteration.}\vspace*{-1ex}
\tabulinesep=.75mm
\begin{tabu}{|c|c|c|c|}
\hline
case                                           &                 & VarPro          & VarPro+AdaDROPS                            \\ \hline
\multirow{2}{*}{$m \geq n \geq \kappa_k$} & {\tt w.}     & $O(n^3)$      & $O(\kappa_k^3)$                     \\ \cline{2-4} 
                                               & {\tt w.o.} & $O(n^2(m+n))$ & $O(\kappa_k^2(m + \kappa_k)) $ \\ \hline
\multirow{2}{*}{$n > m \geq \kappa_k$}    & {\tt w.}     & $O(m^2(m+n))$       & $O(\kappa_k^3)$                     \\ \cline{2-4} 
                                               & {\tt w.o.} & $O(m^2(m+n))$ & $O(\kappa_k^2(m + \kappa_k)) $ \\ \hline
\multirow{2}{*}{$n \geq \kappa_k\geq m$}  & {\tt w.}     & $O(m^2(m+n))$       & $O(m^2(m+\kappa_k))$                      \\ \cline{2-4} 
                                               & {\tt w.o.} & $O(m^2(m+n))$ & $O(m^2(m+\kappa_k))$                \\ \hline
\end{tabu}
\label{tab:varpro_complexity}
\end{table}

For VarPro, each iteration involves different linear systems due to the matrix $D_k$. Consequently, while the matrix $A^\top A$ can be precomputed, the Cholesky decomposition of \cref{eq:ogl:1} and \cref{eq:ogl:2} cannot be reused across iterations. 
A detailed breakdown of the computational complexity in the $k$-th iteration is provided in \cref{tab:varpro_complexity}.

The above discussions imply that eventually the computational complexity depends on $\abs{\ol{\Ee}_{x}}$. 
This means the practical acceleration offered by AdaDROPS not only depends on the sparsity of the solution, more importantly depends on how accurate the estimated support $\ol{\Ee}_{x}$ is. 
In the next section we shall see that {\tt Option II} in general offers a much better estimation quality.

\section{Numerical experiments}
\label{sec:numerics}
To evaluate the efficiency of AdaDROPS, we consider both overlapping and nonoverlapping sparse optimization problems. 
We use the suffix ``Acc-$\beta$/$u$" to refer algorithms combined with AdaDROPS with {\tt Option I or II}. 
For VarPro \cref{algo:standard:varpro}, L-BFGS method \footnote{\url{https://github.com/stephenbeckr/L-BFGS-B-C}} \cite{byrd1995limited} is adopted to solve the upper-level problem. 
For linear system solving, when the dimension is small, a direct Cholesky solver is applied, while for large dimension the preconditioned conjugate gradient (PCG) is adopted. 
All experiments are performed on an Apple MacBook Pro (2021) equipped with an Apple M1 Max and 64GB of unified memory. All code is implemented in MATLAB R2025b, with source code of our methods available at the GitHub repository\footnote{\url{https://github.com/TTony2019/AdaDROPS}}.

\subsection{Overlapping group sparsity}\label{sec:numerics:overlapping}

For overlapping group sparsity, two problems are considered: the classic overlapping group LASSO on datasets from LIBSVM~\cite{chang2011libsvm}, and the synthesis approach wavelet image processing problem \cite{elad2007analysis}.

\subsubsection{LIBSVM datasets}

In this example, we consider ADMM/VarPro with and without AdaDROPS, and compare with SLEP\footnote{\url{https://yelabs.net/software/SLEP/}} method proposed in \cite{Liu:2009:SLEP:manual, yuan2011efficient}. For this experiment, three datasets {\tt gisette}, {\tt E2006.test} and {\tt E2006.train}, are considered. The configurations of the problems, including problem dimension ($m,n$), regularization parameter $\lambda$, number of total groups $\Nn$, group size ({\tt gs}), overlapping size ({\tt os}), cardinality of the support of optimal solution $\abs{\supp(\xsol)}$ and number of nonzero groups $\abs{\Ii_{\xsol}}$, for the three datasets are summarized in the \cref{tab:configuration}.

\begin{table}[!htb]
\centering
\small
\caption{Configurations of the overlapping group LASSO problem for the three datasets. Denote $\bar\lambda  = \max_{i \in \dbrack{\Nn}}\norm{A^\top_{G_i} y}/w_i >0$. 
}\vspace*{-1ex}
\tabulinesep=.75mm
\begin{tabu}{c|c|c|c|c|c|c|c}
\hline
 & $(m,n)$ & $\lambda$ & $\Nn$ & {\tt gs} & {\tt os} & $\abs{\supp(\xsol)}$ & $\abs{\Ii_{\xsol}}$ \\ \hline
{\tt gisette} & $(6000,5000)$ & $\bar\lambda/10$ & $1000$ & $7$ & $2$ & $157$ &  $48$ \\ \hline
{\tt E2006.test} & $(3308,150358)$ & $\bar\lambda/10^6$ & $15036$ & $50$ & $40$ & $24$ & $11$ \\ \hline
{\tt E2006.train} & $(16087,150360)$ & $\bar\lambda/10^6$ & $3759$ & $100$ & $60$ & $464$ & $15$ \\ \hline
\end{tabu}
\label{tab:configuration}
\end{table}

\vspace*{-2ex}

\begin{figure}[htbp]
    \centering      
    \setlength{\tabcolsep}{-3pt}
	\begin{tabular}{ccc}  
        \includegraphics[height=41mm, trim={2mm 8mm 2.8mm 4mm}, clip]{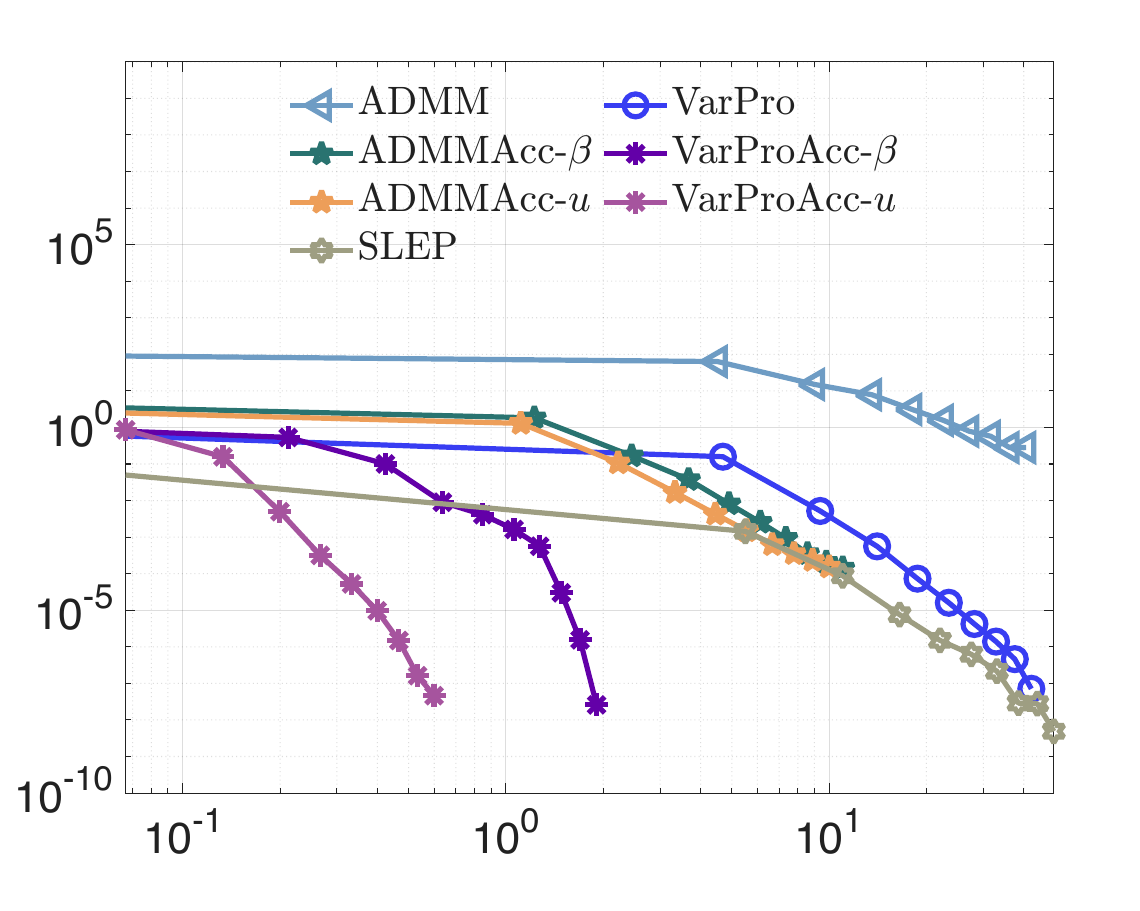} & 
        \includegraphics[height=41mm, trim={2mm 8mm 2.8mm 4mm}, clip]{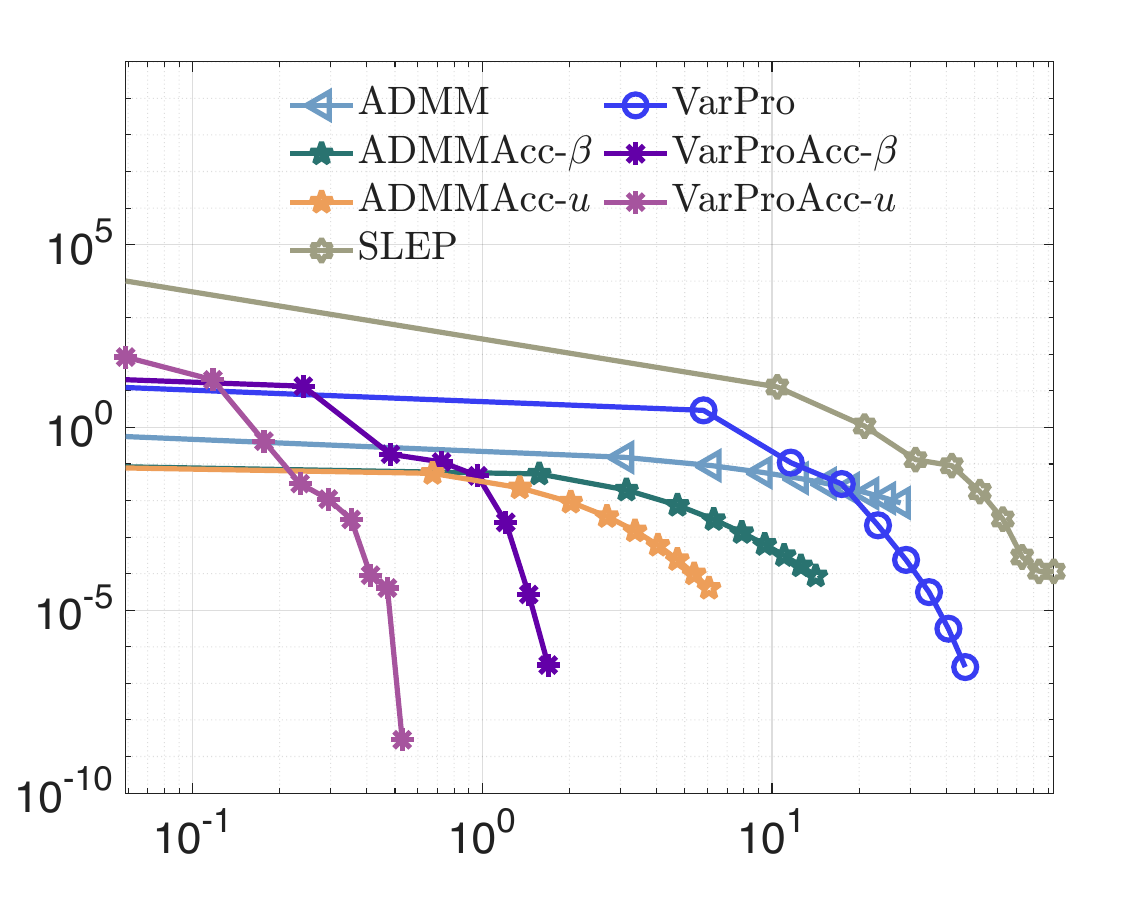}   &          
        \includegraphics[height=41mm, trim={2mm 8mm 2.8mm 4mm}, clip]{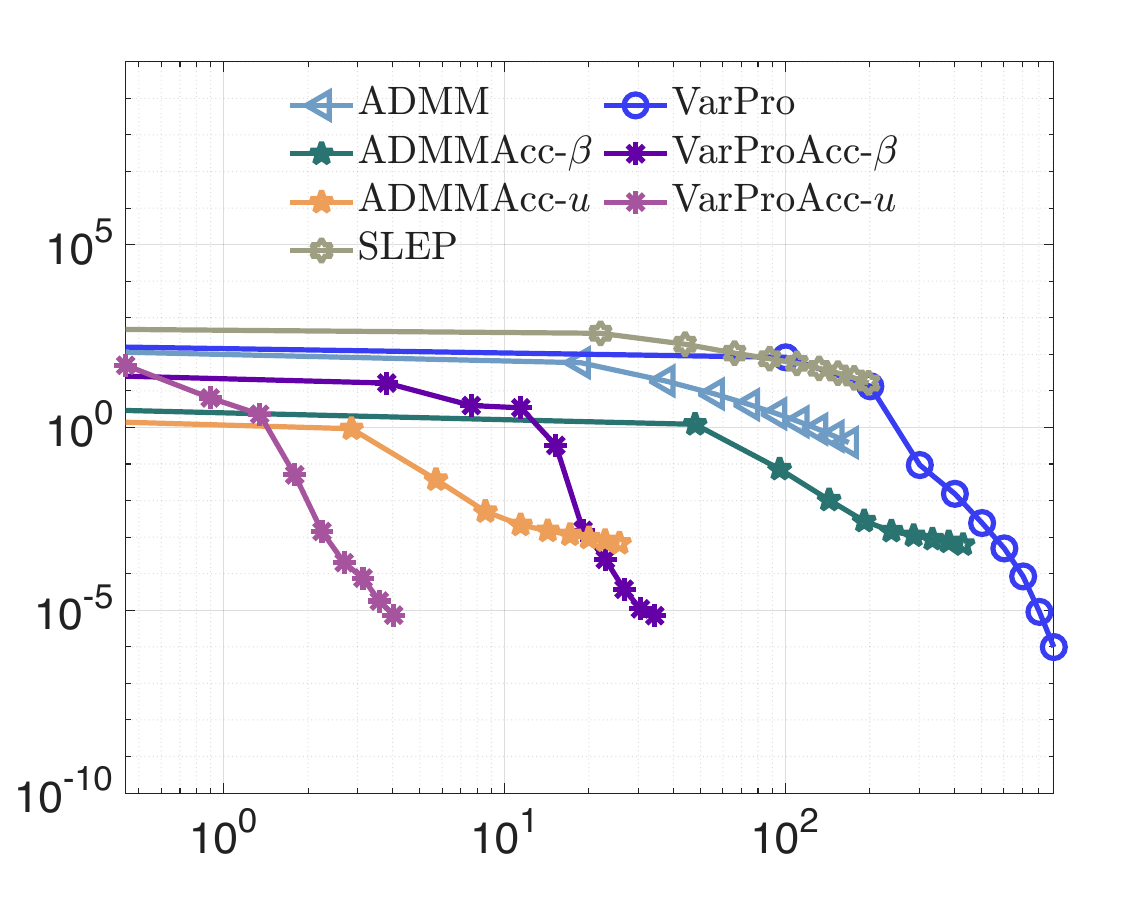} \\[-1mm]
        {\scriptsize (a). {\tt gisette} runtime} & {\scriptsize (b). {\tt E2006.test} runtime} & {\scriptsize (c). {\tt E2006.train} runtime}  \\[-0mm]
        \includegraphics[height=41mm, trim={2mm 8mm 2.8mm 4mm}, clip]{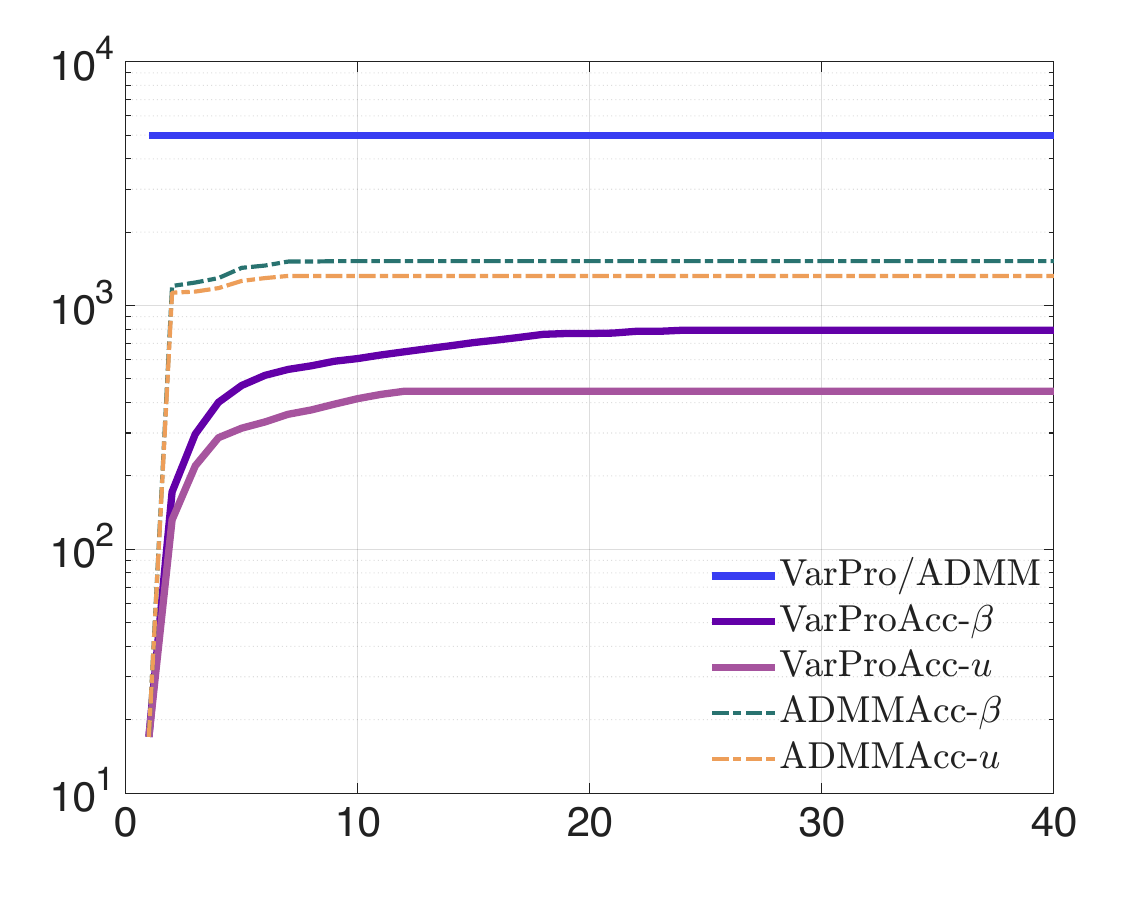}  & 
        \includegraphics[height=41mm, trim={2mm 8mm 2.8mm 4mm}, clip]{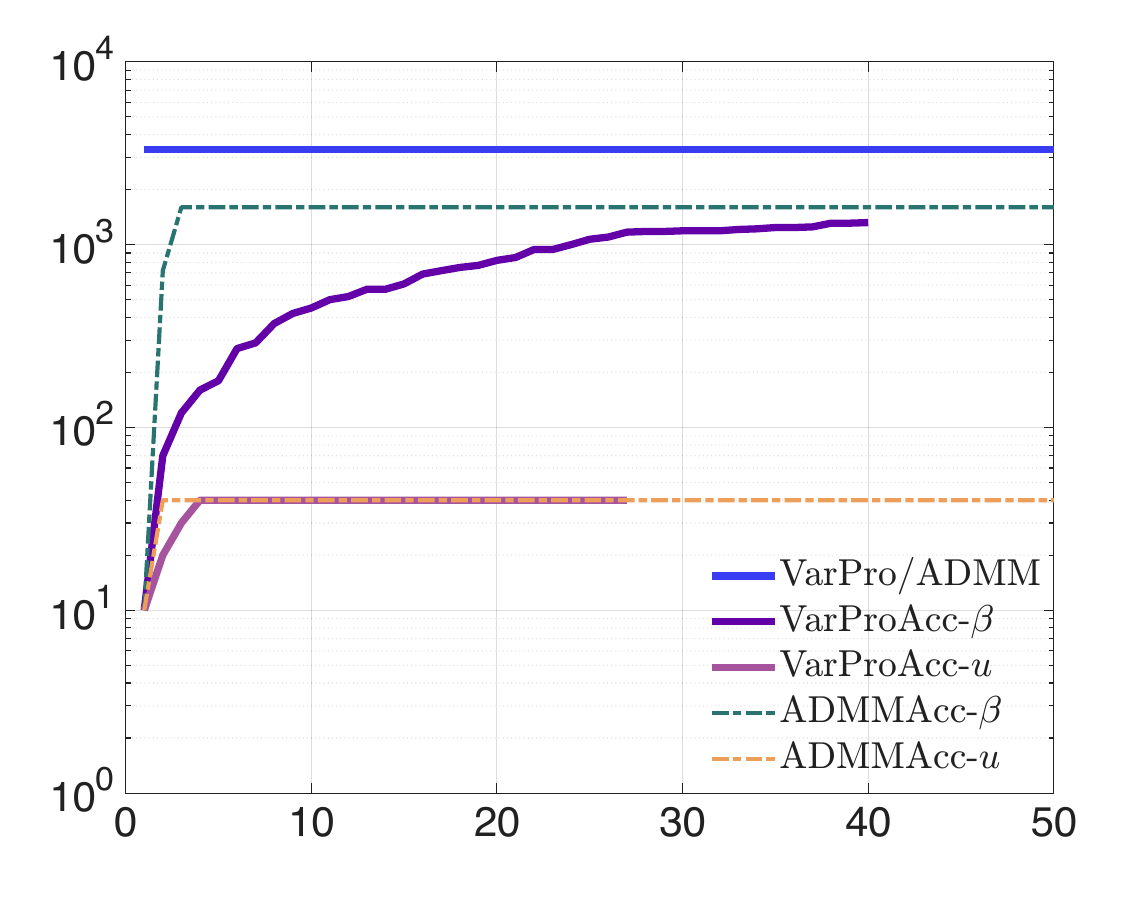}       &   
        \includegraphics[height=41mm, trim={2mm 8mm 2.8mm 4mm}, clip]{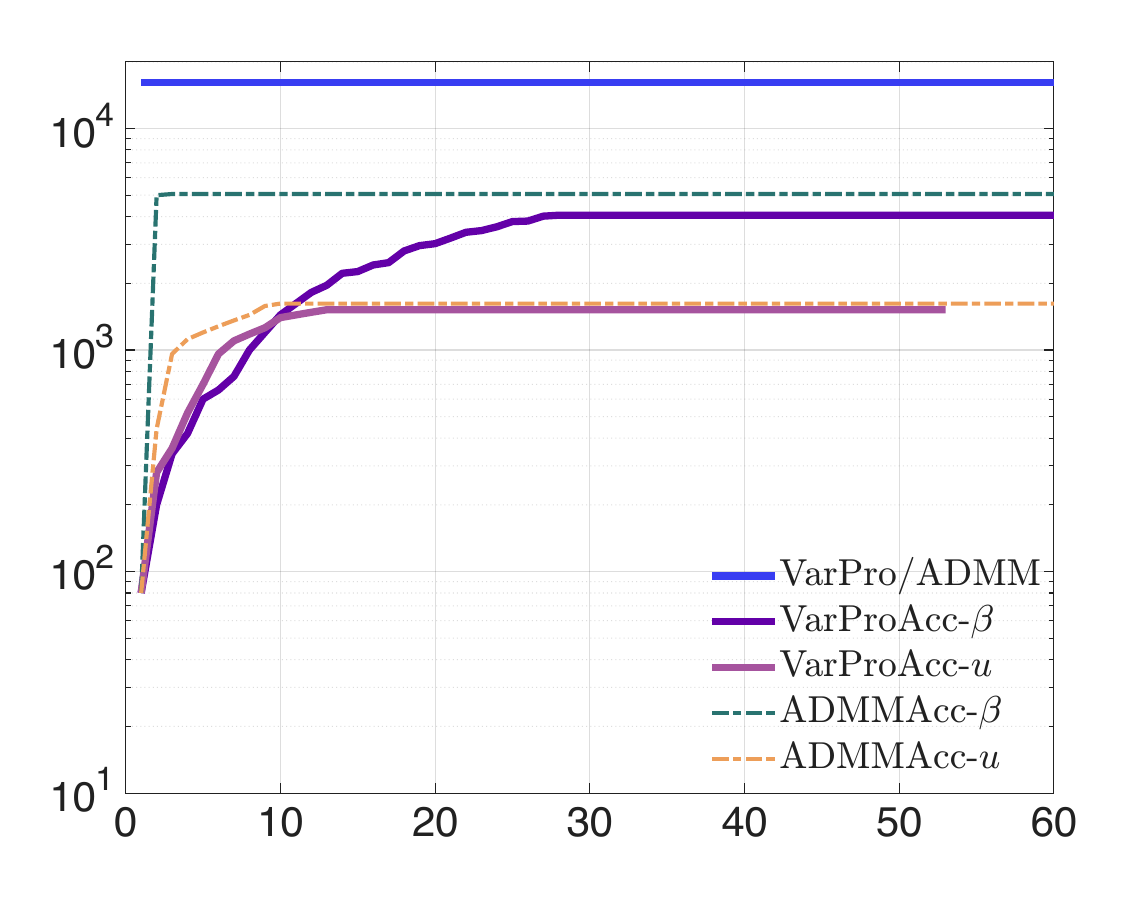}\\ [-1mm]
        {\scriptsize (a). {\tt gisette} $\abs{\Ee_{x}^{(k)}}$} & {\scriptsize (b). {\tt E2006.test} $\abs{\Ee_{x}^{(k)}}$} & {\scriptsize (c). {\tt E2006.train} $\abs{\Ee_{x}^{(k)}}$}
	\end{tabular}   \vspace*{-3pt}        
        \caption{Comparison of overlapping group LASSO over three datasets. First row: runtime; Second row: size of linear system.  
        }
        \label{fig:ogl:syn:1}
\end{figure}

The numerical comparison is provided in \cref{fig:ogl:syn:1} with first row comparing runtime and second row the dimension of linear systems of {\tt Option I/II}.
From the above comparison, we observe that
\begin{itemize}
    \item AdaDROPS significantly reduces the runtime of ADMM and VarPro. In particular, VarProAcc-$u$ provides more than an order speed-up. Coupled with AdaDROPS, both ADMM and VarPro also outperform the SLEP method.
    \item The {\tt Option II} which uses OGN certificate is better than the {\tt Option I} with LASSO certificate in detecting the nonzero groups, especially when the ratio of overlapping size against group size is large. 
\end{itemize}

\subsubsection{Wavelet-based image processing}

Let $M:\RR^{n\times n}\to\RR^p$ be a sparse random Gaussian matrix, given an image $u\in\RR^{n\times n}$, consider the observation
$
y = Mu + \varepsilon 
$  
where $\varepsilon$ stands for white Gaussian noise. To recover $u$ from $y$, one approach is the wavelet based sparse reconstruction model. Denote $W$ a wavelet transform and $W^{-1}$ the inverse transform, then the synthesis based approach \cite{elad2007analysis} to reconstruct $u$ reads
\begin{equation*} 
    \min_{x\in \RR^{n^2}} \sfrac{1}{2\lambda} \norm{MW^{-1} x - y}_2^2 + \norm{Lx}_{1,2},
\end{equation*}
where $x$ denotes the wavelet coefficients. 
The matrix $L$ defines the overlapping group structure, constructed according to the parent-child relationships of wavelet coefficients in a tree structure, as described in \cite{rao2011convex}.

\begin{figure}[htbp]
    \centering
    \setlength{\tabcolsep}{-4pt}
    \begin{tabular}{ccc}  
    \includegraphics[height=41mm, trim={2mm 8mm 2.8mm 4mm}, clip]{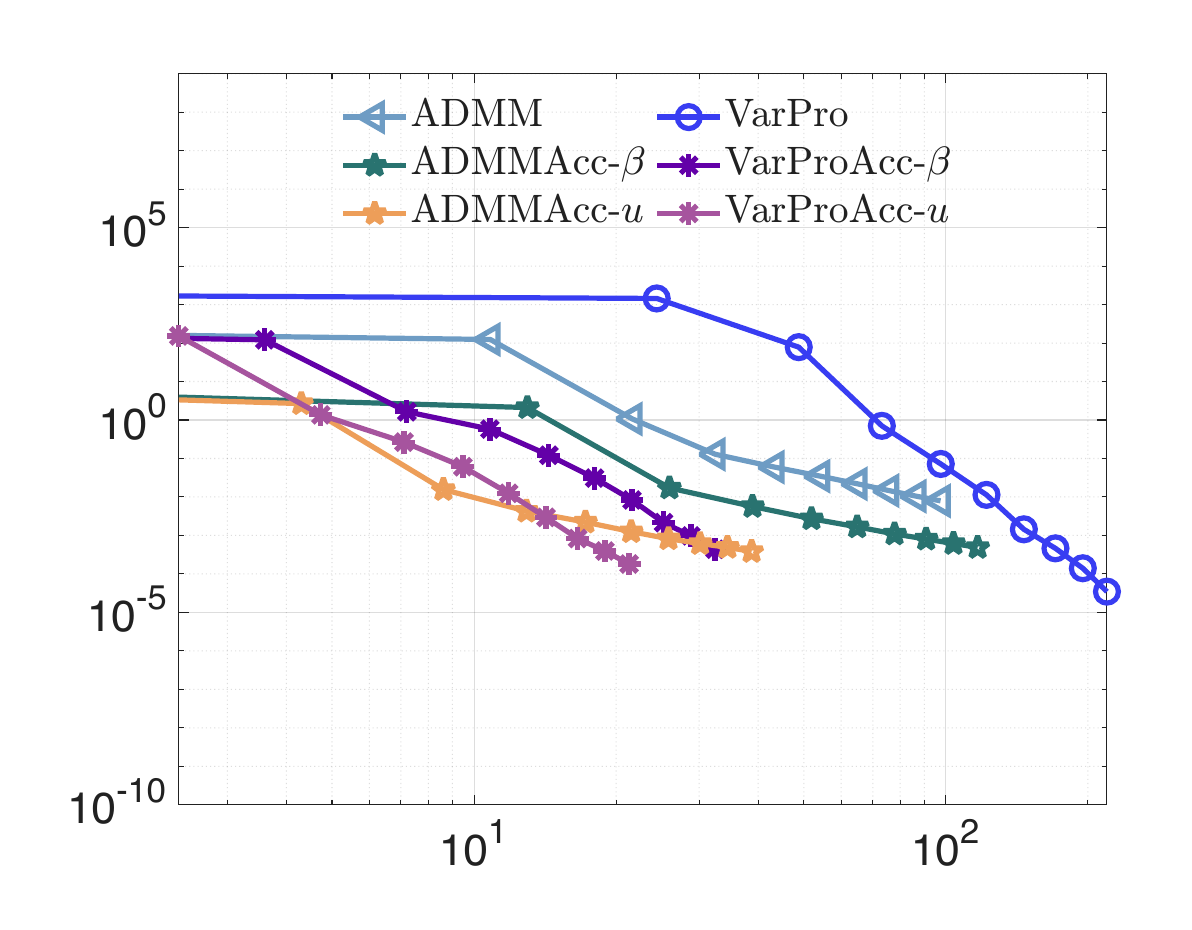} &
    \includegraphics[height=41mm, trim={2mm 8mm 2.8mm 4mm}, clip]{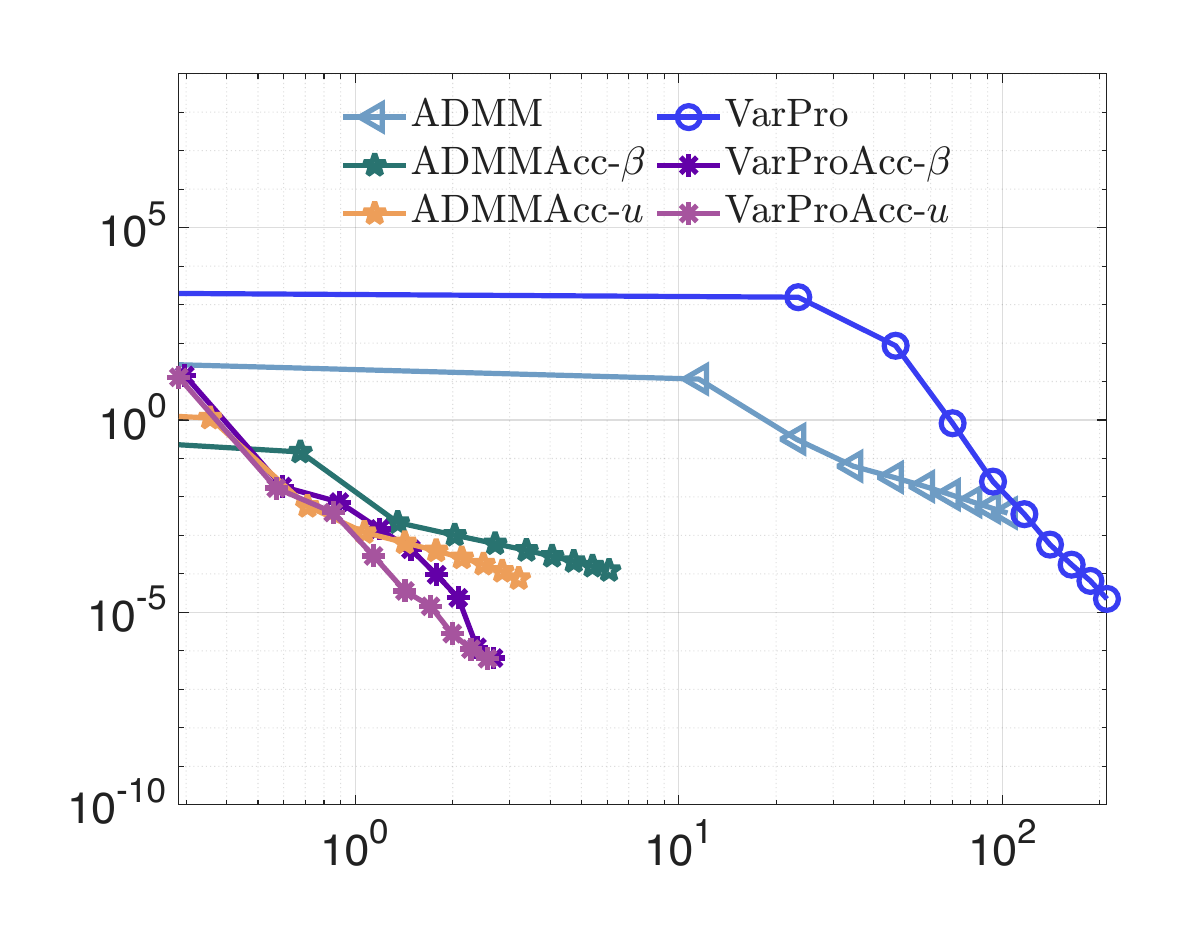} &
    \includegraphics[height=41mm, trim={2mm 8mm 2.8mm 4mm}, clip]{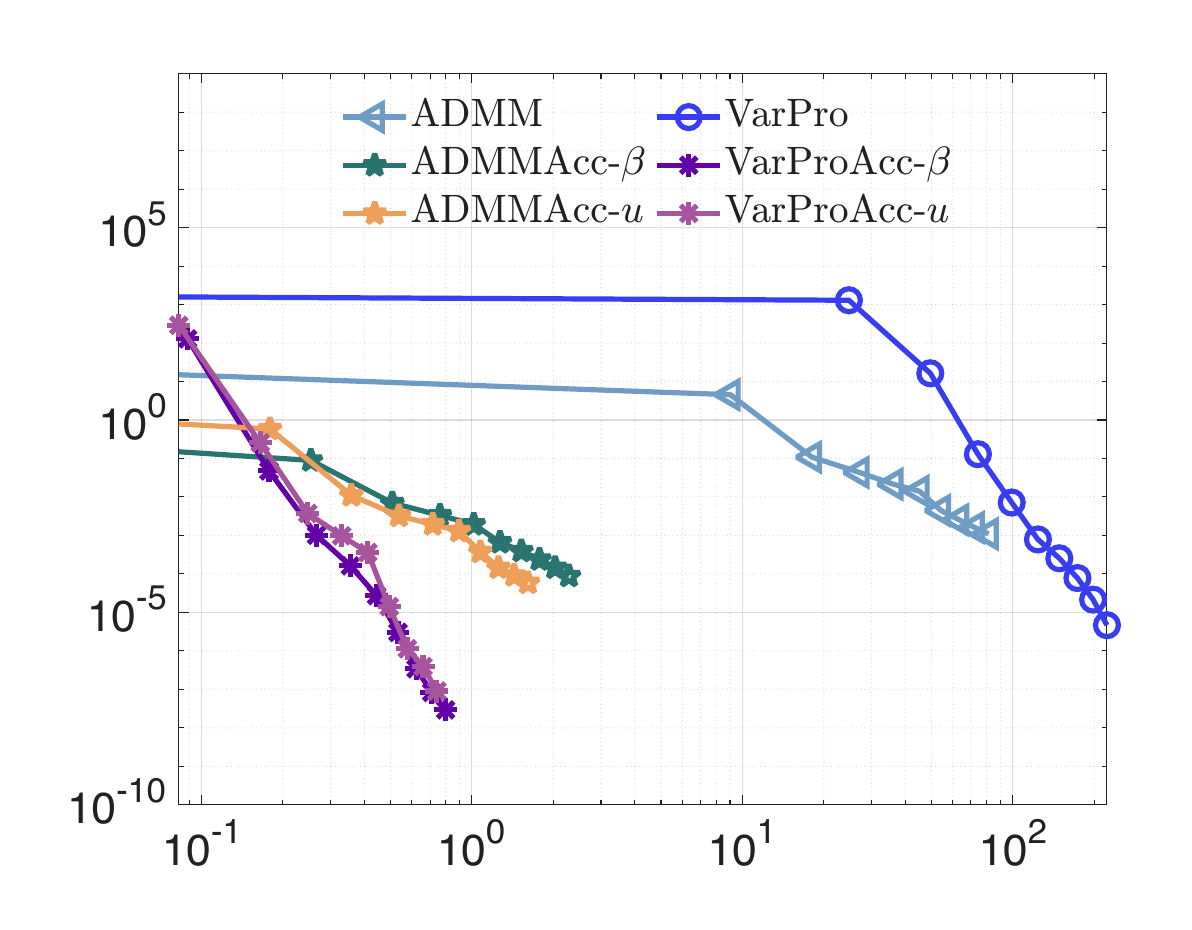} \\[-1mm]
    {\scriptsize (a). $n=128, \lambda=0.05$} & {\scriptsize (b). $n=128, \lambda=0.1$} & {\scriptsize (c). $n=128, \lambda=0.5$} \\[-0mm]
    \includegraphics[height=41mm, trim={2mm 8mm 2.8mm 4mm}, clip]{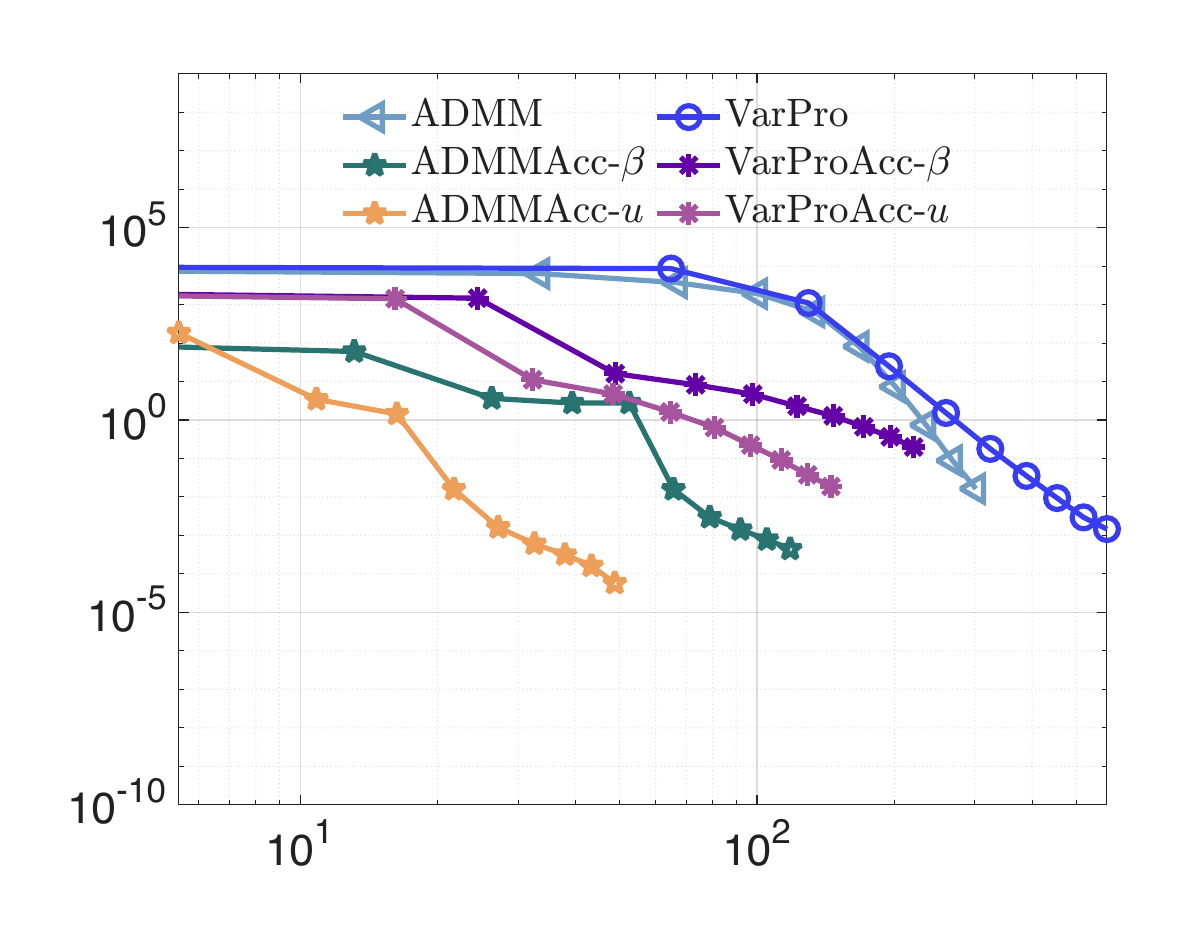} &  
    \includegraphics[height=41mm, trim={2mm 8mm 2.8mm 4mm}, clip]{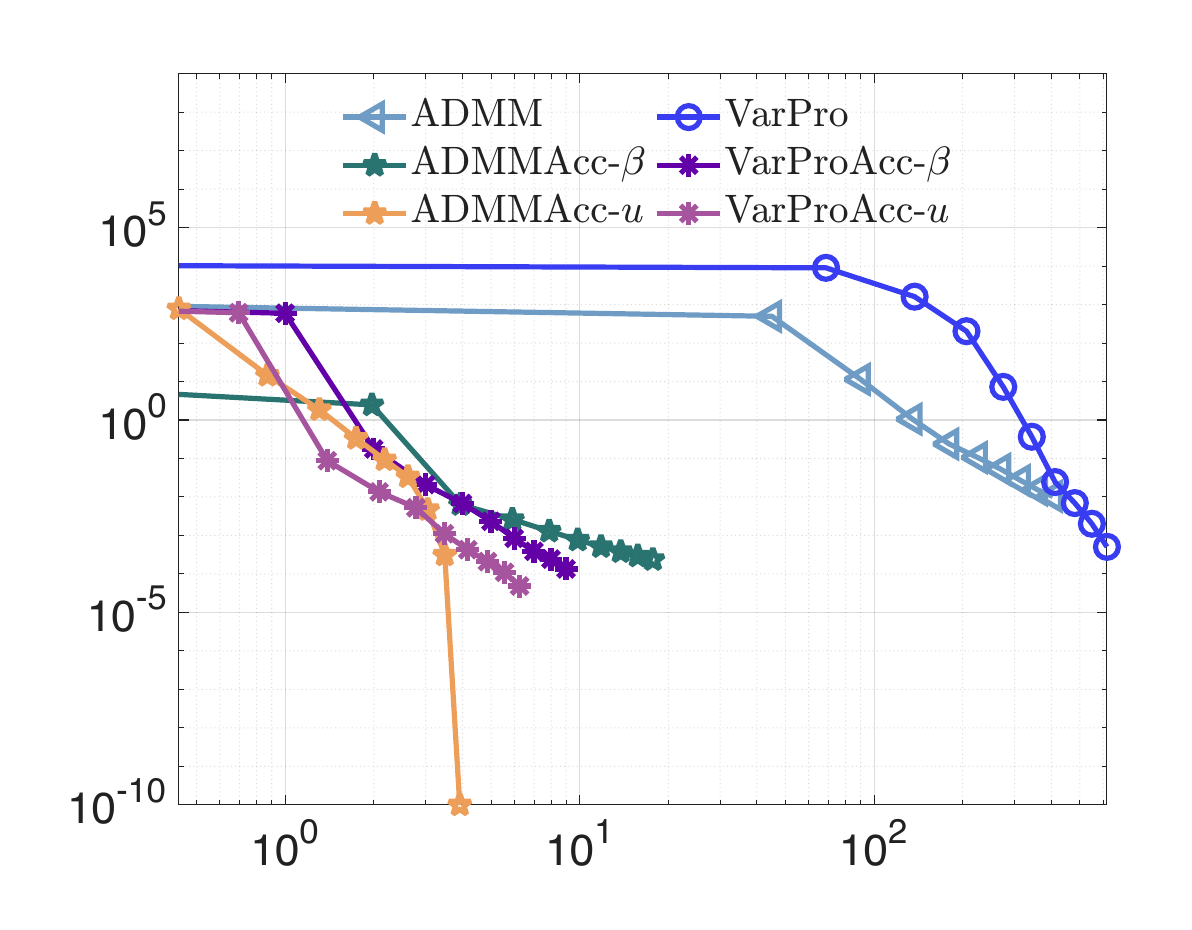}  & 
    \includegraphics[height=41mm, trim={2mm 8mm 2.8mm 4mm}, clip]{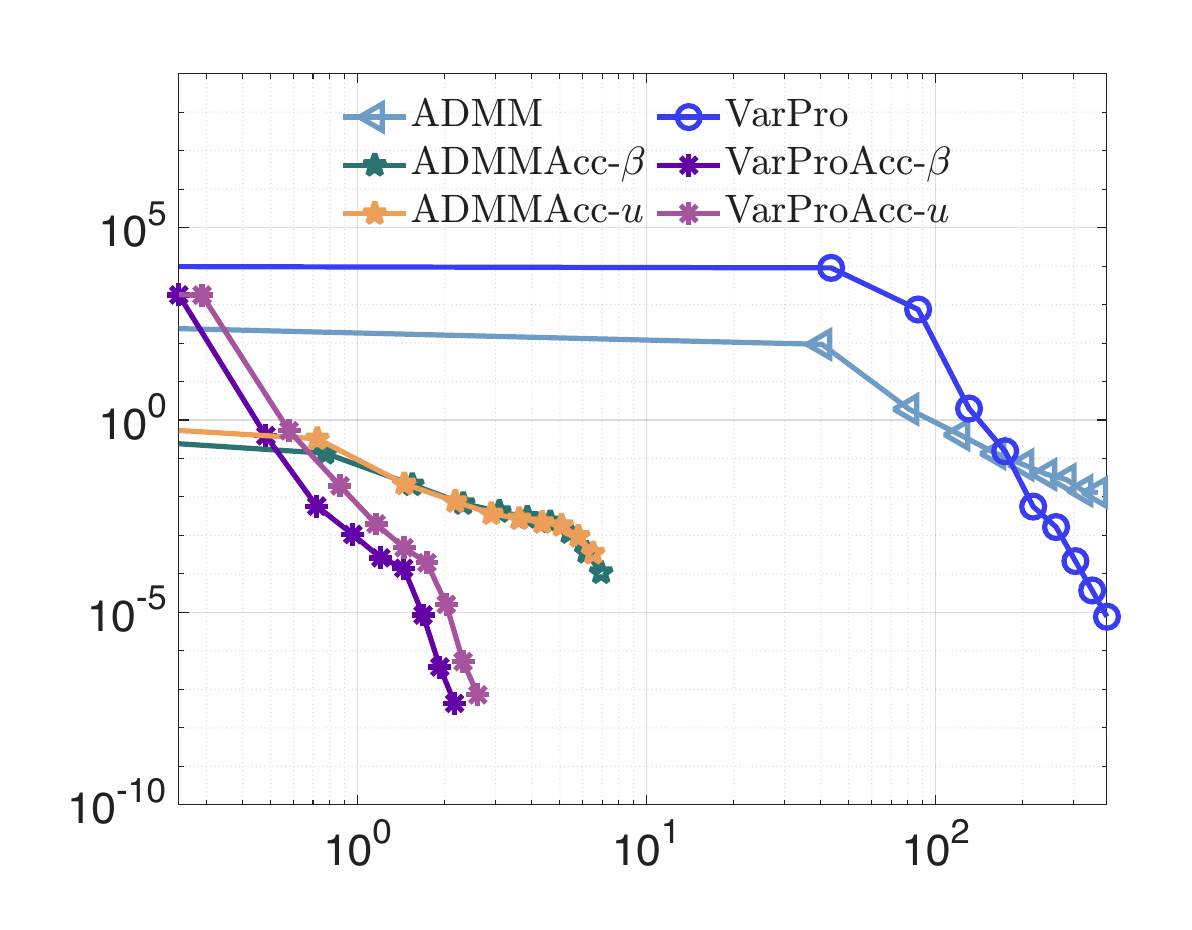} \\[-1mm]
    {\scriptsize (d). $n=256, \lambda=0.05$} & {\scriptsize (e). $n=256, \lambda=0.1$} & {\scriptsize (f). $n=256, \lambda=0.5$} \\
    \end{tabular}  \vspace*{-3pt} 
    \caption{Runtime comparison on two synthetic images. First row: $n=128$; Second row: $n=256$.}
    \label{fig:cs:recon}
    \vspace*{-2ex}
\end{figure}

In this experiment, the Haar wavelet is used. The settings of the problems are: i) Two synthetic images are considered with size $n = 128, 256$; ii) Matrix $M$ is generated by sparse random Gaussian distribution, for $n=128$ we set $p=\frac{128^2}{2}$, and for $n=256$ we choose $p=\frac{256^2}{4}$; iii) Wavelet decomposition level: $7$ for $n=128$ and $8$ for $n=256$.

Runtime comparison of ADMM/VarPro with/without AdaDROPS are provided in \cref{fig:cs:recon}, under different choices of regularization parameter $\lambda$
\begin{itemize}
    \item AdaDROPS consistently provides speedup over the standard algorithms.
    \item Between the two certificates, The OGN certificate outperforms the LASSO certificate for small $\lambda$; However, when $\lambda$ is large, e.g. 3rd column of \cref{fig:cs:recon}, both certificates provide comparable support estimation, the overhead of computing OGN certificate makes it slower than the LASSO certificate.
\end{itemize}

\subsection{Nonoverlapping sparsity}
We also consider nonoverlapping sparse optimization, e.g. $\ell_1$-norm and nonoverlapping group norm, on problems including LASSO, group LASSO and multi-task LASSO.

In the literature, dimension reduction for nonoverlapping sparsity are widely studied, three representative algorithms are considered for comparison, which are: semismooth Newton method (SSNAL)\footnote{\url{https://github.com/MatOpt/SuiteLasso}}\footnote{{\url{https://github.com/YangjingZhang/SparseGroupLasso}}} \cite{li2018highly,zhang2020efficient}, safe screening and dual extrapolation based  method CELER\footnote{\url{https://github.com/mathurinm/celer}} \cite{massias2018celer,massias2020dual} and the previously compared SLEP \cite{Liu:2009:SLEP:manual}. 
For AdaDROPS, for simplicity we consider only VarPro. As for nonoverlapping sparsity, the LASSO certificate and the OGN certificate are equivalent, hence we use ``VarProAcc'' to denote the method combined with AdaDROPS.

To ensure a consistent and fair comparison of algorithms with different stopping criteria, we adopt the relative KKT residual \cite{li2018highly,zhang2020efficient}, which is defined as
\[
    \eta = \sfrac{\| Lx - \mathrm{prox}_{\|\cdot \|_{1,2}} (Lx - \frac{1}{\lambda} L^{-1} A^\top (Ax-y))\|}{1 + \|Lx \|_{1,2} + \|Ax-y \|} .
\]
Note that for $\ell_1$-norm, we have $L=\Id$.

\subsubsection{LASSO}

We first consider LASSO problem, for which 8 datasets from LIBSVM as shown in \cref{table:lasso} are considered. From the comparison, we observe
\begin{itemize}
    \item Consistent with previous comparison, VarProAcc delivers significant acceleration over standard VarPro. In particular, for \texttt{log1p.E2006.train}, VarPro fails to converge in 20 minutes, while VarProAcc only needs 20 seconds. 
    
    \item For the compared methods, CELER overall is the fastest. The key of CELER is its ability to identify the sparsity of the solution. 
\end{itemize}

\begin{table}[htbp]
\caption{Time and accuracy comparison on different datasets for LASSO problem. 
For each dataset, $\lambda=\lambda_{\max}/r$ with $\lambda_{\max} = \norm{A^\top y}_{\infty}$ and $r$ specified below.
The time unit is ``second'', and ``-'' means algorithm failing to converge in 1200 seconds. The {\color{cell_red}{red}} / {\color{cell_blue}{blue}} markers represent the fastest/second fastest in terms of the running time.}\vspace*{-1ex}
\begin{adjustbox}{width=\textwidth,totalheight=\textheight,keepaspectratio}
\begin{small}
\tabulinesep=.5mm
\begin{tabu}{|cl|c|c|c|c|c|c|c|c|}
\hline
\multicolumn{2}{|c|}{data $(m,n)$} & $r$ & $\abs{\supp(\xsol)}$  &   & VarPro  & VarProAcc  & SSNAL  & SLEP   & CELER  \\ \hline
\multicolumn{2}{|c|}{\multirow{2}{*}{\begin{tabular}[c]{@{}c@{}}E2006.test\\ (3308,150358)\end{tabular}}}  & \multirow{2}{*}{$10^3$}   & \multirow{2}{*}{1}  & $\eta$ & 7.7e-8  & 4.7e-8  & 3.4e-7 & 7.5e-12 & 1.2e-12 \\ \cline{5-10} 
\multicolumn{2}{|c|}{}   &  &   & time   & 11.78  & 0.05  & 0.21   & {\color{cell_red}0.01}   & {\color{cell_blue}0.03}    \\ \hline
\multicolumn{2}{|c|}{\multirow{2}{*}{\begin{tabular}[c]{@{}c@{}}E2006.train\\ (16087,150360)\end{tabular}}} &\multirow{2}{*}{$10^3$}   & \multirow{2}{*}{1}  & $\eta$ & 1.9e-9 & 8.2e-10 & 6.4e-7 & 5.8e-5 & 1.8e-11 \\ \cline{5-10} 
\multicolumn{2}{|c|}{}     &    &    & time   & 242.54  & {\color{cell_red}0.07}    & 0.48   & 0.71   & {\color{cell_blue}0.09}    \\ \hline
\multicolumn{2}{|c|}{\multirow{2}{*}{\begin{tabular}[c]{@{}c@{}}log1p.E2006.test\\ (3308,4272226)\end{tabular}}} &\multirow{2}{*}{$10^3$}   & \multirow{2}{*}{8}  & $\eta$ & 2.5e-4  & 6.7e-4  & 1.3e-5 & 5e-2   & 7e-4    \\ \cline{5-10} 
\multicolumn{2}{|c|}{}    &    &    & time   & 198.24   & 13.14   & {\color{cell_blue}3.98}   & 449.00 & {\color{cell_red}3.00}    \\ \hline
\multicolumn{2}{|c|}{\multirow{2}{*}{\begin{tabular}[c]{@{}c@{}}log1p.E2006.train\\ (16087,4272227)\end{tabular}}} &\multirow{2}{*}{$10^3$} & \multirow{2}{*}{5}  & $\eta$ & -     & 4.6e-5    & 3.8e-5 & - & 8.6e-4  \\ \cline{5-10} 
\multicolumn{2}{|c|}{}   &  &  & time   & -    & 20.52   & {\color{cell_blue}7.32}   & -   & {\color{cell_red}5.14}    \\ \hline
\multicolumn{2}{|c|}{\multirow{2}{*}{\begin{tabular}[c]{@{}c@{}}abalone7\\ (4177,6435)\end{tabular}}} &\multirow{2}{*}{$10^3$}  & \multirow{2}{*}{24} & $\eta$ & 4.3e-5  & 8.3e-5  & 6.5e-6 & 3.0e-3 & 2.3e-5  \\ \cline{5-10} 
\multicolumn{2}{|c|}{}  &   &  & time   & 37.12   & {\color{cell_red}0.56}    & {\color{cell_blue}1.26}   & 101.73  & 3.08    \\ \hline
\multicolumn{2}{|c|}{\multirow{2}{*}{\begin{tabular}[c]{@{}c@{}}space\_ga9\\ (3107,5005)\end{tabular}}} &\multirow{2}{*}{$10^3$}   & \multirow{2}{*}{14} & $\eta$ & 7.9e-6  & 2.0e-5  & 8.0e-7 & 2.3e-4 & 4.4e-5  \\ \cline{5-10} 
\multicolumn{2}{|c|}{}   &   &  & time   & 14.00  & {\color{cell_red}0.05}    & 0.52   & 26.07  & {\color{cell_blue}0.20}    \\ \hline
\multicolumn{2}{|c|}{\multirow{2}{*}{\begin{tabular}[c]{@{}c@{}}bodyfat7\\ (252,116280)\end{tabular}}}   &\multirow{2}{*}{$10^3$}           & \multirow{2}{*}{2}  & $\eta$ & 1.1e-6  & 2.0e-6  & 4.9e-8 & 5e-3 & 9.1e-13 \\ \cline{5-10} 
\multicolumn{2}{|c|}{}  &  &   & time   & 1.81    & {\color{cell_red}0.29}   & 0.85   & 72.20  & {\color{cell_blue}0.33}    \\ \hline
\multicolumn{2}{|c|}{\multirow{2}{*}{\begin{tabular}[c]{@{}c@{}}rcv1\\ (20242,47236)\end{tabular}}}      & \multirow{2}{*}{2}           & \multirow{2}{*}{3}  & $\eta$ & 2.2e-6   & 2.5e-6  & 8.7e-7 & 2.5e-7 & 6.3e-13 \\ \cline{5-10} 
\multicolumn{2}{|c|}{}   &     &  & time   & 92.06   & {\color{cell_blue}0.06}    & 0.12   & 0.22   & {\color{cell_red}0.02}    \\ \hline
\end{tabu}
\label{table:lasso}
\end{small}
\end{adjustbox}
\end{table}

\subsubsection{Group LASSO}

We continue using the datasets from LASSO example to compare the above methods on nonoverlapping group LASSO, and the result is provided in \cref{table:grpLasso}:
\begin{itemize}
    \item VarProAcc again outperforms the standard VarPro. 
    \item CELER performs quite poorly on the large-scale datasets, for which SLEP also struggles. 
    \item SSNAL is the best of the three compared methods, especially for the two large-scale datasets.
\end{itemize} 
Overall, VarProAcc provides robust and comparable performance.

\begin{table}[!htb]
\caption{Time and accuracy comparison on different datasets for group LASSO problem. For each dataset, $\lambda=\lambda_{\max}/r$ with $\lambda_{\max} = \max_{i \in \dbrack{\Nn}}\norm{A^\top_{G_i} y}/w_i$ and $r$ specified below. {\tt nnz(g)} denotes the number of nonzero entries and groups of $\xsol$. The time unit is ``second'', and ``-'' means algorithm failing to converge in 1200 seconds. The {\color{cell_red}{red}} / {\color{cell_blue}{blue}} markers represent the fastest/second fastest in terms of the running time.}\vspace*{-1ex}
\begin{adjustbox}{width=\textwidth,totalheight=\textheight,keepaspectratio}
\tabulinesep=.5mm
\begin{tabular}{|c|c|c|c|c|c|c|c|c|}
\hline
data $(m,n,\mathcal{N})$  & $r$ & {\tt nnz(g)}  &  & VarPro  & VarProAcc  & SSNAL & SLEP  & CELER \\ \hline
\multirow{2}{*}{\begin{tabular}[c]{@{}c@{}}E2006.test\\ (3308,150358,5012)\end{tabular}}   & \multirow{2}{*}{$10^3$} & \multirow{2}{*}{5(1)}  & $\eta$ & 2.5e-10 & 2.5e-10  & 2.0e-8 & 3.5e-6  & 6.6e-5 \\ \cline{4-9}          &                         &                          & time   & 19.93     & {\color{cell_red}0.22}    & {\color{cell_blue}0.32}    & 1.51    & 19.21  \\ \hline
\multirow{2}{*}{\begin{tabular}[c]{@{}c@{}}E2006.train\\ (16087,150360,5012)\end{tabular}}          & \multirow{2}{*}{$10^3$} & \multirow{2}{*}{4(1)}    & $\eta$ & 5.0e-9  & 6.3e-12 & 6.1e-10 & 7.7e-9  & 7.6e-5 \\ \cline{4-9}          &                         &                          & time   & 382.40    & {\color{cell_red}0.60}    & {\color{cell_blue}0.64}    & 16.62    & 20.18  \\ \hline
\multirow{2}{*}{\begin{tabular}[c]{@{}c@{}}log1p.E2006.test\\ (3308,4272226,142408)\end{tabular}}   & \multirow{2}{*}{$10^3$} & \multirow{2}{*}{263(1)}  & $\eta$ & 2.5e-10        & 1.6e-8  & 1.6e-8 & 3.1e-5  & -      \\ \cline{4-9}          &                         &                          & time   & 18.75       & {\color{cell_blue}4.09}    & {\color{cell_red}2.95}    & 682.68  & -      \\ \hline
\multirow{2}{*}{\begin{tabular}[c]{@{}c@{}}log1p.E2006.train\\ (16087,4272227,142408)\end{tabular}} & \multirow{2}{*}{$10^3$} & \multirow{2}{*}{339(1)}        & $\eta$ & -      & 4.1e-9  & 4.4e-9   & -       & -      \\ \cline{4-9}         &                         &                          & time   & -       & {\color{cell_blue}7.80}    & {\color{cell_red}5.88}   & -       & -      \\ \hline
\multirow{2}{*}{\begin{tabular}[c]{@{}c@{}}gisette\\ (6000,5000,167)\end{tabular}}                  & \multirow{2}{*}{$5$}    & \multirow{2}{*}{322(13)} & $\eta$ & 1.0e-8   & 5.0e-9  & 2.3e-9  & 1.4e-7  & 6.5e-8 \\ \cline{4-9}         
&                         &                          & time   & 29.99      & {1.60}    & {\color{cell_blue}1.59}    & 20.17   & {\color{cell_red}1.03}   \\ \hline
\multirow{2}{*}{\begin{tabular}[c]{@{}c@{}}abalone7\\ (4177,6435,215)\end{tabular}}                 & \multirow{2}{*}{$5$}    & \multirow{2}{*}{62(2)}   & $\eta$ & 1.7e-9   & 9.4e-10 & 1.6e-9  & 1.1e-8 & 2.9e-7 \\ \cline{4-9}         &                         &                          & time   & 13.96    & {\color{cell_red}0.25}    & {\color{cell_blue}0.57}    & 29.91   & 0.78   \\ \hline
\multirow{2}{*}{\begin{tabular}[c]{@{}c@{}}space\_ga9\\ (3107,5005,250)\end{tabular}}               & \multirow{2}{*}{$5$}    & \multirow{2}{*}{18(1)}   & $\eta$ & 6.4e-10  & 2.3e-9  & 4.6e-9  & 1.5e-7  & 3.6e-5 \\ \cline{4-9}          &                         &                          & time   & 5.23      & {\color{cell_red}0.10}    & {\color{cell_blue}0.24}    & 0.37    & 0.32   \\ \hline
\multirow{2}{*}{\begin{tabular}[c]{@{}c@{}}bodyfat7\\ (252,116280,1163)\end{tabular}}               & \multirow{2}{*}{$5$}    & \multirow{2}{*}{86(1)}   & $\eta$ & 6.1e-8   & 4.8e-9  & 2.0e-7  & 7.4e-6  & 1.1e-4 \\ \cline{4-9}          &                         &                          & time   & 1.03     & {\color{cell_red}0.18}    & {\color{cell_blue}0.50}    & 14.70   & 1.97   \\ \hline
\end{tabular}
\label{table:grpLasso}
\end{adjustbox}
\end{table}

\subsubsection{Multi-task LASSO} 
In multi-task LASSO \cite{gramfort2012mixed}, one aims to solve the following problem
\[
\min_{X\in\RR^{n\times q}} \sfrac{1}{2\lambda}\norm{AX-Y}_F^2 + \msum_{j=1}^{n} \norm{x^j} , 
\]
where $\norm{\cdot}_F$ stands for Frobenius norm, $A\in\RR^{m\times n}$, $Y\in\RR^{m\times q}$ and $x^j,j=1,...,n$ stands for the rows of $X$.

Two datasets are considered for experiments, one synthetic random Gaussian dataset and a joint magnetoencephalography (MEG)/electroencephalography (EEG) dataset from \cite{ndiaye2015gap}, their specific settings are in the table below. The number of nonzero groups of two datasets under different choices of $\lambda$ is also provided. 

\begin{table}[!htb]
\label{tab:configuration2}
\centering
\small
\caption{Configurations of the two datasets. Denote $\lambda_{\max} = \max_j \norm{x^j Y}_2$. 
}\vspace*{-1ex}
\tabulinesep=.75mm
\begin{tabu}{c|ccc|ccc}
\hline
                    & \multicolumn{3}{c|}{${\tt synthetic}$}                                                                  & \multicolumn{3}{c}{${\tt MEG/EEG}$}                                                                     \\ \hline
$(m,n,q)$           & \multicolumn{3}{c|}{$(1000,10000,80)$}                                                                  & \multicolumn{3}{c}{$(305,22494,85)$}                                                                    \\ \hline
$\lambda$           & \multicolumn{1}{c|}{$\lambda_{\max}/5$} & \multicolumn{1}{c|}{$\lambda_{\max}/10$} & $\lambda_{\max}/15$ & \multicolumn{1}{c|}{$\lambda_{\max}/2$} & \multicolumn{1}{c|}{$\lambda_{\max}/5$} & $\lambda_{\max}/10$ \\ \hline
$\abs{\Ii_{\xsol}}$ & \multicolumn{1}{c|}{$30$}               & \multicolumn{1}{c|}{$60$}               & $256$                & \multicolumn{1}{c|}{$5$}                & \multicolumn{1}{c|}{$20$}               & $76$                \\ \hline
\end{tabu}
\end{table}

\cref{MTL : syn} demonstrates the performance of VarPro w./w.o. AdaDROPS compared with SLEP and CELER:
\begin{itemize}
    \item For both datasets, VarProAcc is faster than the standard VarPro. Especially when $\lambda$ is large. Note that for MEG/EEG dataset, the advantage of VarProAcc is limited, which is due to the reason that $m$ is not large. 
    \item The performances of SLEP and CELER are mixed, with CELER faster on MEG/EEG data and SLEP faster on the synthetic data. Both are slower than VarProAcc. 
\end{itemize}

\begin{figure}[htbp]
    \centering
    \setlength{\tabcolsep}{-2.5pt}
    \begin{tabular}{ccc}
        \includegraphics[height=41mm, trim={2mm 8mm 2.8mm 4mm}, clip]{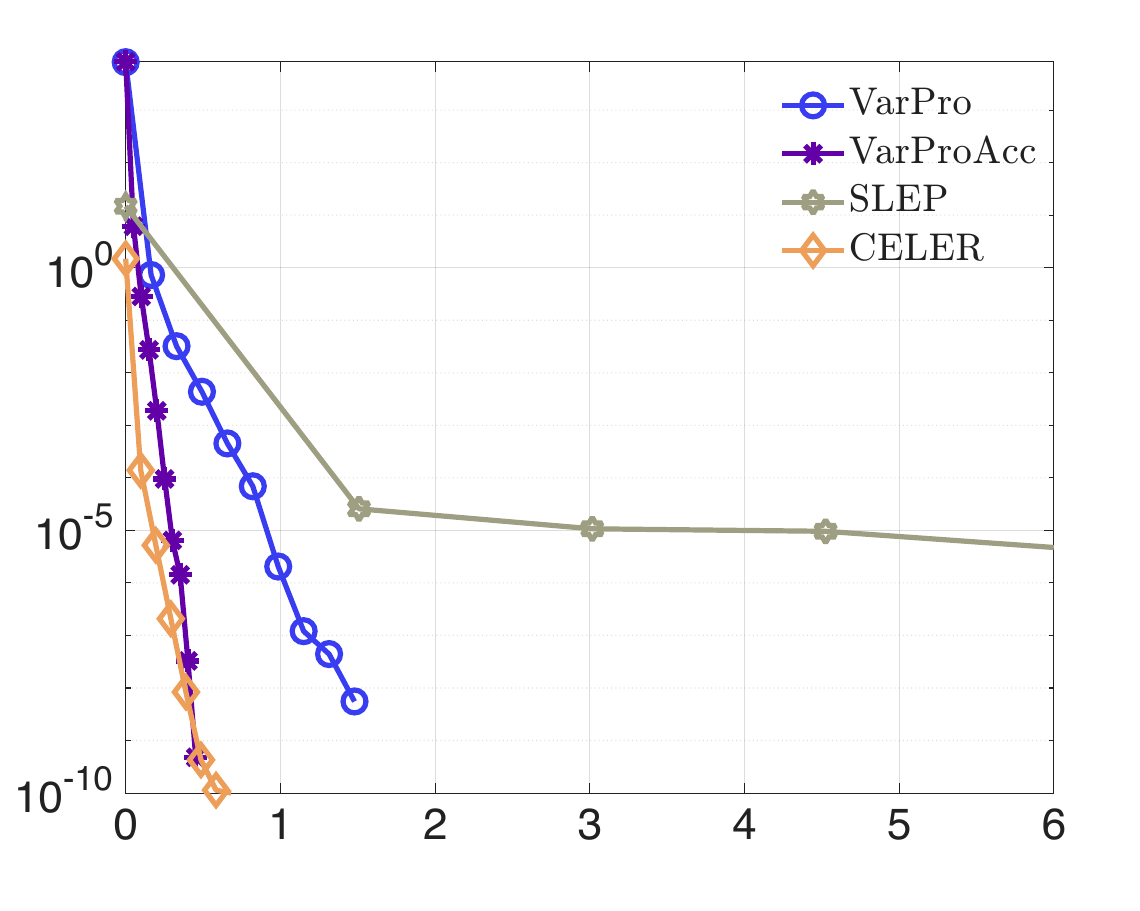}&
        \includegraphics[height=41mm, trim={2mm 8mm 2.8mm 4mm}, clip]{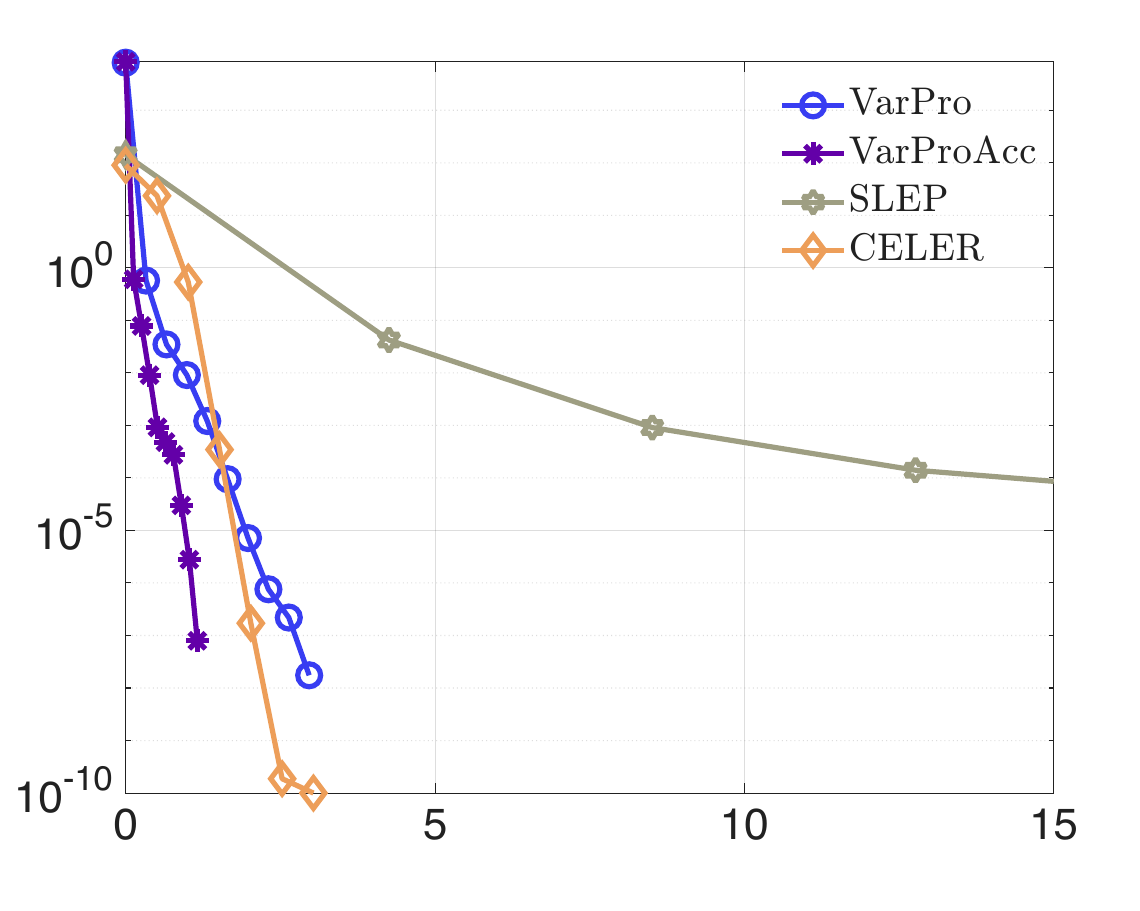}&
        \includegraphics[height=41mm, trim={2mm 8mm 2.8mm 4mm}, clip]{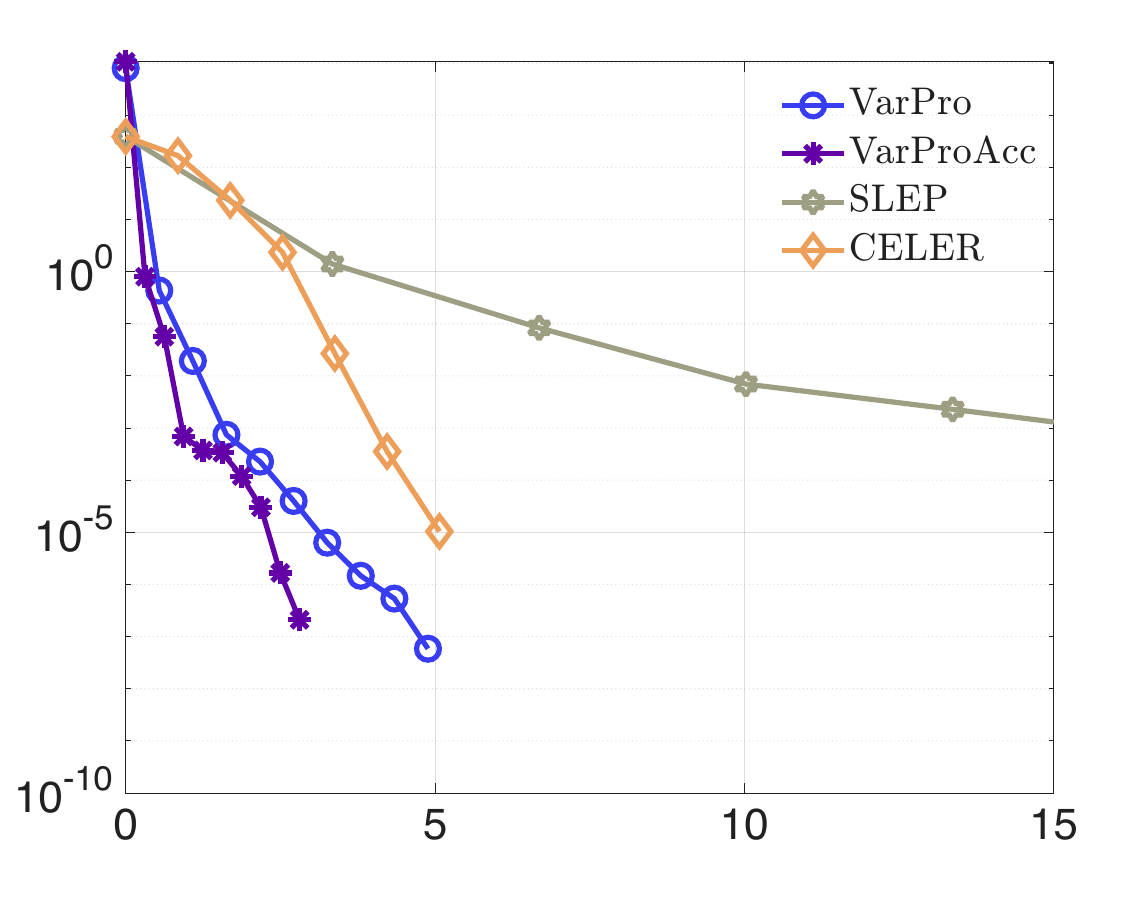}\\[-1mm]
        {\scriptsize (d). {\tt MEG/EEG}, $\lambda_{\max}/2$} & {\scriptsize (e). {\tt MEG/EEG}, $\lambda_{\max}/5$} & {\scriptsize (f). {\tt MEG/EEG}, $\lambda_{\max}/10$} \\
        \includegraphics[height=41mm, trim={2mm 8mm 2.8mm 4mm}, clip]{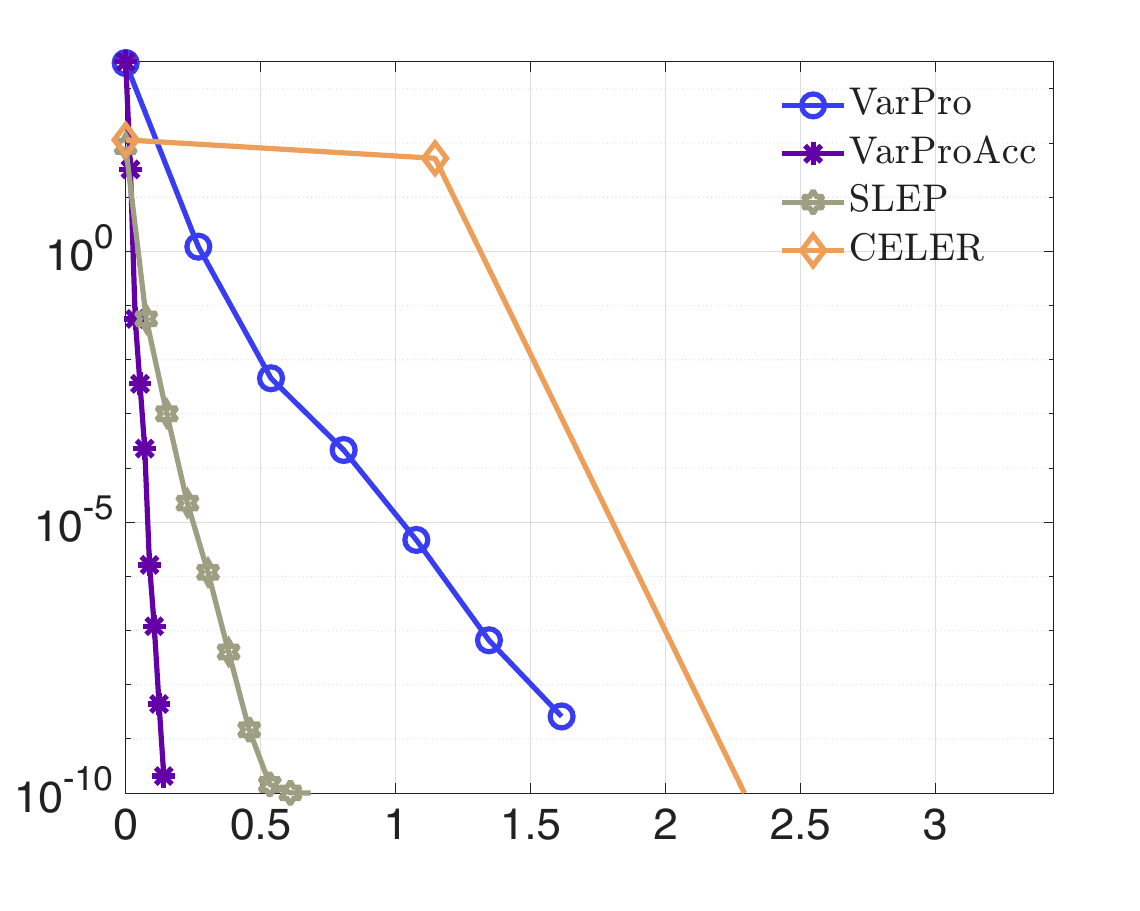}&
        \includegraphics[height=41mm, trim={2mm 8mm 2.8mm 4mm}, clip]{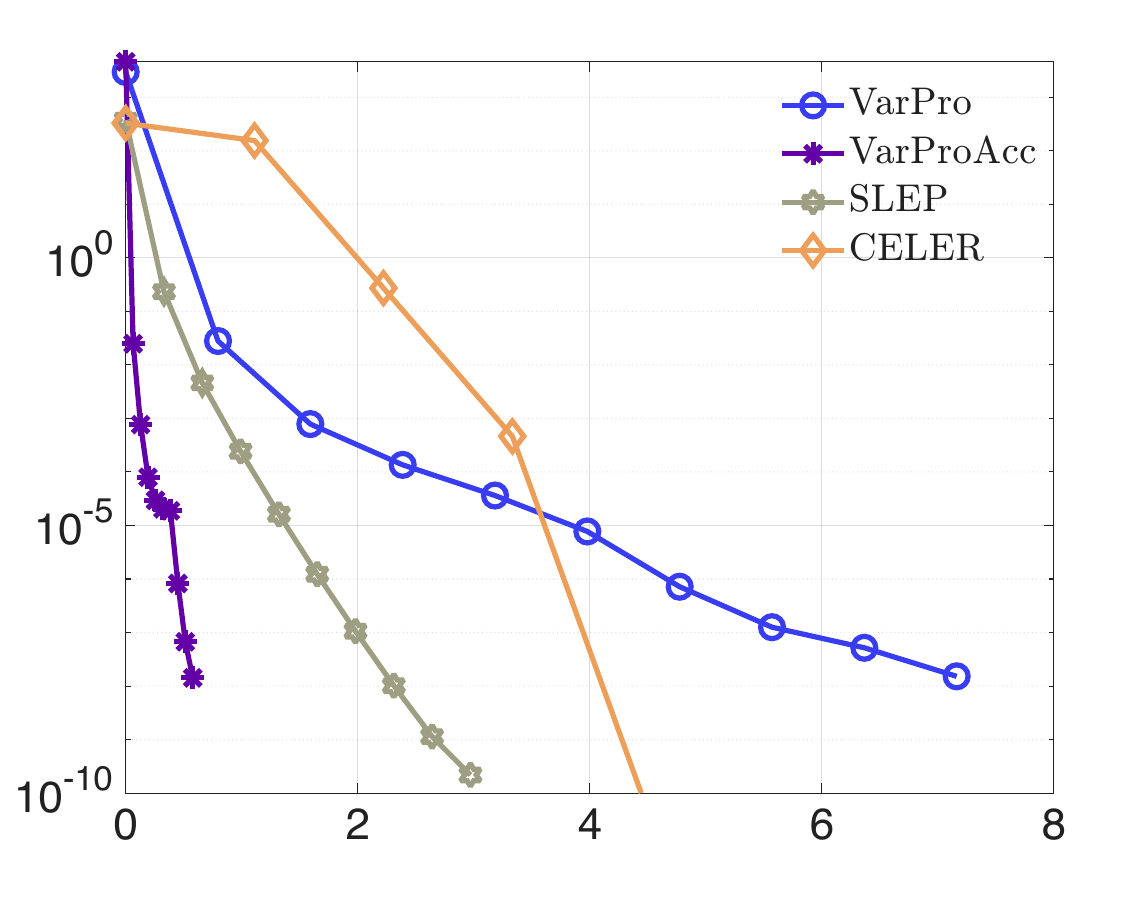}&
        \includegraphics[height=41mm, trim={2mm 8mm 2.8mm 4mm}, clip]{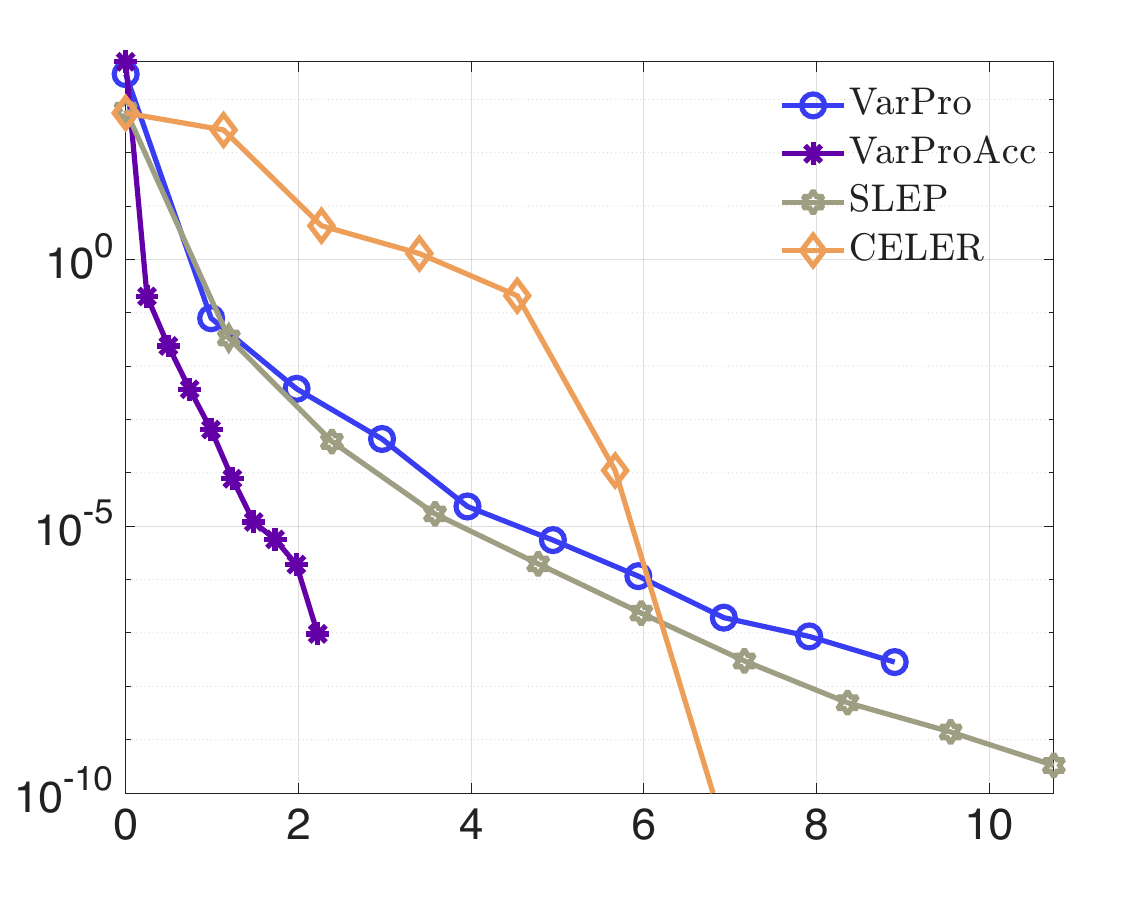} \\[-1mm]
        {\scriptsize (a). {\tt synthetic}, $\lambda_{\max}/5$} & {\scriptsize (b). {\tt synthetic}, $\lambda_{\max}/10$} & {\scriptsize (c). {\tt synthetic}, $\lambda_{\max}/15$} 
    \end{tabular}
    \caption{Comparison of multi-task LASSO on different datasets.}
    \label{MTL : syn}
    \vspace*{-2ex}
\end{figure}

\section{Conclusion}
\label{sec:conclusion}

In this paper, we propose dual certificates for overlapping group sparsity, namely LASSO certificate and OGN certificate, with the latter providing more efficient support identification performance. Based on these certificates, we introduce AdaDROPS, an adaptive dimension reduction scheme that can be combined with existing solvers. Integrating AdaDROPS with ADMM and VarPro significantly improves computational efficiency on popular datasets, in both overlapping and nonoverlapping settings. 
A future direction is to generalize the adaptive dimension reduction framework to more complicated problems, such as total variation, nuclear norm, and even nonsmooth model like TV-L1.

\appendix
\crefalias{section}{appendix}
\crefalias{subsection}{appendix}

\section{Properties of the lifting operator}
\label{sec:ogn_properties}

Here we collect some necessary properties related to $L$, projection operators and effective lifting operator $\widehat{L}$ defined in \cref{sec:overlapping_group_norm}.

\subsection{Facts of lifting operator \texorpdfstring{$L$}{L}}
\label{sec:fact:L}
\begin{fact}
\label{fact:L} The group operator $L$ has full column rank, and 
\begin{enumerate}[label={\rm \alph*).}]
    \item Each row of $L$ has only $1$ nonzero entry;
    \item For $i\in\dbrack{n}$, the number of nonzero entries of the $i$'th column of $L$ represents the number of groups that the $i$-th entry belongs to; 
    \item The columns of $L$ are orthogonal, hence $L^\top L$ is diagonal with the diagonal entries being $\Pa{L^\top L}_{i,i} = \sum_{t \in \{t \in \dbrack{\Nn}, i \in G_t\}} w_t^2,~ i \in \dbrack{n}$;
    \item For $k \in \dbrack{p}$, denote $t = \phi(k)$ according to \cref{eq:phi_k}. By \BLUE{(a)}, denote the column index of the unique non-zero entry in the $k$-th row as $j_k$, then $L_{k,j_k} \neq 0 $ is the unique non-zero entry in $L_{J_t,j_k}$ by \BLUE{(b)}, and $j_k \in G_t$.
\end{enumerate}
\end{fact}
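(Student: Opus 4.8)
The plan is to read every claim directly off the block structure of $L$. Recall that $L$ is the vertical stacking of the blocks $L_{G_1},\dots,L_{G_{\Nn}}$, where $L_{G_i}\in\RR^{|G_i|\times n}$ sends $x\mapsto w_i x_{G_i}$. Enumerating $G_i=\{g_1<g_2<\cdots\}$, the $\ell$-th row of $L_{G_i}$ equals $w_i e_{g_\ell}^\top$, i.e.\ it carries the single nonzero entry $w_i$ in column $g_\ell$. This immediately yields (a): every row of $L$, being a row of some $L_{G_i}$, has exactly one nonzero entry.

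For (b), I would fix a column index $i\in\dbrack{n}$ and note that the rows of $L$ with a nonzero entry in column $i$ are precisely the rows of the blocks $L_{G_t}$ for which $i\in G_t$ — one such row per such $t$ — so the number of nonzeros in column $i$ is $\abs{\{t\in\dbrack{\Nn}\mid i\in G_t\}}$, the number of groups containing $i$. Since $\mcup_t G_t=\dbrack{n}$, each column of $L$ has at least one nonzero entry.

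For (c), I would combine (a) and (b): if $i\neq i'$, no row of $L$ can carry a nonzero entry in both columns $i$ and $i'$, because by (a) each row has only one nonzero; hence the inner product of columns $i$ and $i'$ vanishes and $L^\top L$ is diagonal. Its $(i,i)$ entry is $\sum_{k\in\dbrack{p}} L_{k,i}^2$, and by (b) the nonzero summands are exactly one copy of $w_t^2$ for each $t$ with $i\in G_t$, giving $(L^\top L)_{i,i}=\sum_{t:\,i\in G_t}w_t^2$. Full column rank then follows at once: the columns of $L$ are pairwise orthogonal and each nonzero, hence linearly independent. For (d), I would use that $\Jj=\{J_1,\dots,J_{\Nn}\}$ partitions $\dbrack{p}$ with $J_t$ indexing the rows of the block $L_{G_t}$. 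Given $k\in\dbrack{p}$, $t=\phi(k)$ is the unique index with $k\in J_t$, and row $k$ is a row of $L_{G_t}$, hence of the form $w_t e_{j_k}^\top$ with $j_k\in G_t$; by (a), $L_{k,j_k}=w_t$ is the unique nonzero of row $k$, and restricting to the rows $J_t$, column $j_k$ contains only this one nonzero since within the block $L_{G_t}$ each coordinate of $G_t$ appears in exactly one row.

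All four parts are purely structural bookkeeping, so I do not anticipate a genuine obstacle. The only care needed is to keep the two layers cleanly separated — the \emph{row} layer $\dbrack{p}$, partitioned by $\Jj$ into one block per group, versus the \emph{column} layer $\dbrack{n}$, which is only covered (with multiplicity) by $\Gg$ — and to track how a coordinate $i\in\dbrack{n}$ is duplicated across blocks while a lifted index $k\in\dbrack{p}$ lives in a single block $J_{\phi(k)}$.
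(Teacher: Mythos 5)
Your argument is correct and is exactly the straightforward block-structure bookkeeping that the paper leaves implicit (the statement is given as a Fact without proof, being an immediate consequence of $L$ stacking the blocks $L_{G_i}: x\mapsto w_i x_{G_i}$, each of whose rows is $w_i e_j^\top$ for some $j\in G_i$). All four parts, including full column rank via pairwise-orthogonal nonzero columns, are verified correctly and no gap remains.
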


\subsection{Properties of the projection operators}
\label{sec:prop:proj}
For an overlapping group sparse $x \in \RR^n$, we have the following results of the projection operators of the subspaces defined in \cref{sec:def_extended_supp}.

\begin{lemma}
\label{lemma:proj}
Define $\Ii_{x}, \Ee_x,\Ee_z,\Ee_L,\Tt_x,\Tt_z,\Tt_L$ as in \cref{eq:support_Ix}, \cref{eq:support_Ex}, \cref{eq:support_Ez} and \cref{eq:support_EL}, there holds
    \begin{enumerate}[label={\rm \alph*).}]
    \item $\Tt_L \subseteq \Tt_{z}$.
        \item $P_{\Tt_{L}^\bot} L P_{\Tt_x} = 0$, $ P_{\Tt_{L}} LP_{\Tt_x^\bot} = 0$, $P_{\Tt_{L}^\bot}L = L P_{\Tt_{x}^\bot}$.

        \item $\forall x'\in\RR^n$, $P_{\Tt_{z}} L P_{\Tt_x}x' = LP_{\Tt_x} x'$ and $P_{\Tt_{z}^\bot} L P_{\Tt_{x}} = 0$; 
        \item $\forall u \in \RR^p,P_{\Tt_x} L^\top P_{\Tt_L} u=P_{\Tt_x} L^\top u$ and $ P_{\Tt_x} L^\top P_{\Tt_{z}} u = P_{\Tt_x} L^\top u$;

        \item $L^\top P_{\Tt_{z}} L$ is diagonal with $\pa{L^\top P_{\Tt_{z}} L}_{i,i} = \ssum_{t\in \{ t\in \Ii_{x} \mid i \in G_t\}} w_t^2$. 
    \end{enumerate}
\end{lemma}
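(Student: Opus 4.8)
\emph{Proof strategy.} The plan is to exploit that $\Tt_x,\Tt_z,\Tt_L$ and their orthogonal complements are all \emph{coordinate} subspaces, so each projection acts simply by zeroing out the coordinates outside the associated index set. The one bookkeeping device I would set up first is the map $k\mapsto j_k$ from \cref{fact:L}(d): for $k\in\dbrack{p}$ lying in the nonoverlapping group $J_{\phi(k)}$, the index $j_k\in G_{\phi(k)}$ is the unique original coordinate feeding row $k$ of $L$, and $L_{k,j_k}=w_{\phi(k)}\neq 0$ is the only nonzero entry of that row. Since $P_{\Tt_x}\bm{1}_n$ is the indicator of $\Ee_x$, this gives $(LP_{\Tt_x}\bm{1}_n)_k=w_{\phi(k)}$ if $j_k\in\Ee_x$ and $0$ otherwise, hence the clean description
\[
\Ee_L = \ba{ k\in\dbrack{p} \mid j_k\in\Ee_x } .
\]
Everything below is a short consequence of this identity.

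For (a), I would argue by contraposition at the level of a single index. Let $k\in\Ee_L$, so $j_k\in\Ee_x$. Since $\Ee_x=\dbrack{n}\setminus\mcup_{s\in\Ii_x^c}G_s$, we have $j_k\notin G_s$ for every $s\in\Ii_x^c$; but $j_k\in G_{\phi(k)}$ always, so $\phi(k)\notin\Ii_x^c$, i.e. $\phi(k)\in\Ii_x$, and therefore $k\in J_{\phi(k)}\subseteq\mcup_{t\in\Ii_x}J_t=\Ee_z$. Thus $\Ee_L\subseteq\Ee_z$, i.e. $\Tt_L\subseteq\Tt_z$.

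For (b)--(d) I would use the support description together with adjoints. For any $x'\in\RR^n$, $\supp(LP_{\Tt_x}x')\subseteq\ba{k\mid j_k\in\Ee_x}=\Ee_L$, giving $P_{\Tt_L^\bot}LP_{\Tt_x}=0$; symmetrically $\supp(LP_{\Tt_x^\bot}x')\subseteq\ba{k\mid j_k\notin\Ee_x}=\Ee_L^c$, giving $P_{\Tt_L}LP_{\Tt_x^\bot}=0$ and, since then $LP_{\Tt_x^\bot}x'\in\Tt_L^\bot$, also $P_{\Tt_L^\bot}LP_{\Tt_x^\bot}=LP_{\Tt_x^\bot}$; adding the first identity yields $P_{\Tt_L^\bot}L=LP_{\Tt_x^\bot}$. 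Statement (c) is then immediate: $LP_{\Tt_x}x'\in\Tt_L\subseteq\Tt_z$ by (a)--(b), so $P_{\Tt_z}LP_{\Tt_x}x'=LP_{\Tt_x}x'$ and $P_{\Tt_z^\bot}LP_{\Tt_x}=0$. Statement (d) follows by transposing, using that coordinate projections are self-adjoint: from $P_{\Tt_L^\bot}LP_{\Tt_x}=0$ we get $P_{\Tt_x}L^\top P_{\Tt_L^\bot}=0$, hence $P_{\Tt_x}L^\top=P_{\Tt_x}L^\top P_{\Tt_L}$, and likewise $P_{\Tt_x}L^\top=P_{\Tt_x}L^\top P_{\Tt_z}$ from $P_{\Tt_z^\bot}LP_{\Tt_x}=0$.

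For (e) I would compute entrywise. By \cref{fact:L}(a) each row of $L$ has a single nonzero entry, so $(L^\top P_{\Tt_z}L)_{i,j}=\msum_{k\in\Ee_z}L_{k,i}L_{k,j}$ vanishes unless $i=j=j_k$; the matrix is therefore diagonal with $(L^\top P_{\Tt_z}L)_{i,i}=\msum_{k\in\Ee_z,\,j_k=i}w_{\phi(k)}^2$. For each group $t$ with $i\in G_t$ there is exactly one $k\in J_t$ with $j_k=i$, and $k\in\Ee_z$ precisely when $\phi(k)\in\Ii_x$; hence the sum equals $\msum_{t\in\ba{t\in\Ii_x\mid i\in G_t}}w_t^2$, i.e. the restriction of the diagonal matrix $L^\top L$ of \cref{fact:L}(c) to the active groups. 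The only genuinely delicate point is the description of $\Ee_L$ underlying (a), together with the implication $j_k\in\Ee_x\Rightarrow\phi(k)\in\Ii_x$; once that is in place, (b)--(e) are routine support inclusions and adjoints.
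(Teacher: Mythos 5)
Your proposal is correct and follows essentially the same route as the paper's proof: identify $\Ee_L=\ba{k\mid j_k\in\Ee_x}$ via the single-nonzero-per-row structure of $L$, deduce (a) from $j_k\in\Ee_x\Rightarrow\phi(k)\in\Ii_x$, obtain (b)--(d) by support inclusions and self-adjointness of the coordinate projections, and verify (e) entrywise. The only cosmetic difference is in the second identity of (b), where you show $\supp(LP_{\Tt_x^\bot}x')\subseteq\Ee_L^c$ directly instead of the paper's disjointness argument $\supp(LP_{\Tt_x^\bot}\bm{1}_n)\cap\supp(LP_{\Tt_x}\bm{1}_n)=\varnothing$; this is a minor streamlining, not a different approach.
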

\begin{proof}
To verify \BLUE{(a)}, it suffices to prove $\Ee_L \subseteq \Ee_z$. For any $k \in \dbrack{p}$, denote $j_k$ the nonzero column index of $L$ in the $k$-th row, we have
\vspace*{-1ex}
\[
    \Pa{LP_{\Tt_x} \bm{1}_n}_k = \msum_{j \in \Ee_x} L_{i,j} = \left\{
    \begin{aligned}
        &w_{\phi(k)}, & j_k \in \Ee_x, \\
        &0, & j_k \notin \Ee_x.
    \end{aligned}
    \right.
\]
Hence for any $k \in \Ee_L = \supp(L{P_{\Tt_x}}\bm{1}_n)$, it follows that $j_k \in \Ee_x$. Recall \cref{fact:L} \BLUE{(d)} that $j_k \in G_{\phi(k)}$, therefore ${\phi(k)} \in \Ii_x$, which implies that $k \in \Ee_z$.

For \BLUE{(b)}, given any $x'\in \RR^n$, $\supp(L P_{\Tt_x} x') \subseteq \Ee_L$, hence $L P_{\Tt_x} x' \in \Tt_{L}$, which implies that $P_{\Tt_{L}^\bot} L P_{\Tt_x} = 0$. 
Note that $P_{\Tt_L} L P_{\Tt_x^\bot} \bm{1}_n \in \Tt_L$, by definition of $\Tt_L$
\vspace*{-1.5ex}
\begin{equation*}
    \supp(P_{\Tt_L} L P_{\Tt_x^\bot} x') \subseteq 
    \supp(P_{\Tt_L} L P_{\Tt_x^\bot} \bm{1}_n)
    \subseteq
    \Ee_L = \supp(L P_{\Tt_x} \bm{1}_n).
\end{equation*}
On the other hand, since $P_{\Tt_L}$ is binary diagonal matrix, we have
\vspace*{-1ex}
\begin{equation*}
    \supp(P_{\Tt_L} L P_{\Tt_x^\bot} x') \subseteq 
    \supp(P_{\Tt_L} L P_{\Tt_x^\bot} \bm{1}_n)
    \subseteq 
    \supp(L P_{\Tt_x^\bot} \bm{1}_n) .
\end{equation*}
Note that $\supp(L P_{\Tt_x^\bot} \bm{1}_n)\cap \supp(L P_{\Tt_x} \bm{1}_n)=\varnothing$, hence $\supp(P_{\Tt_L} L P_{\Tt_x^\bot} x') = \varnothing$, meaning that $P_{\Tt_L} L P_{\Tt_x^\bot} x'=0$, hence $P_{\Tt_L} L P_{\Tt_x^\bot}=0$. The last assertion of \BLUE{(b)} can be deduced by the previous two equations. 

To prove \BLUE{(c)}, since $L P_{\Tt_x}x' \in \Tt_{L} \subseteq \Tt_{z}$, it holds that $P_{\Tt_{z}} L P_{\Tt_x}x' = LP_{\Tt_x} x'$ and $P_{\Tt_{z}^\bot} L P_{T_{x}} = 0$.
%
Moreover, note that $ P_{\Tt_x} L^\top P_{\Tt_{L}^\bot} = 0$ (by \BLUE{(b)}) and $P_{\Tt_{x}}L^\top P_{\Tt_z^\bot}=0$ (by \BLUE{(c)}),
which verifies \BLUE{(d)}. 
The last claim can be obtained by direct calculation. For any $i\in\dbrack{n}$, we have $\Pa{L^\top P_{\Tt_{z}} L}_{i,i} =\ssum_{k \in \Ee_z} L_{k,i}^2  = \ssum_{t \in \{ t \in \Ii_{x} \mid i \in G_t\}} w_t^2$. 
%
\end{proof}

\begin{lemma}\label{lem:lemma2.4}
If $u \in \partial \norm{Lx}_{1,2}$, then it holds that $P_{\Tt_{L}^\bot} P_{\Tt_z} u = 0$.
\end{lemma}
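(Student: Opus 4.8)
The plan is to reduce the operator identity to an elementary coordinate‑wise check. Since $P_{\Tt_z}$ and $P_{\Tt_L}$ are binary diagonal (coordinate) projections onto $\Ee_z$ and $\Ee_L$ respectively, the composition $P_{\Tt_{L}^\bot} P_{\Tt_z}$ is just the coordinate projection onto $\Ee_z \setminus \Ee_L$; equivalently, using $\Tt_L \subseteq \Tt_z$ from \cref{lemma:proj} \BLUE{(a)}, one may write $P_{\Tt_{L}^\bot}P_{\Tt_z} = P_{\Tt_z}-P_{\Tt_L}$. Either way, the assertion $P_{\Tt_{L}^\bot}P_{\Tt_z}u = 0$ is equivalent to showing that $u_k = 0$ for every index $k \in \Ee_z \setminus \Ee_L$.

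So I would fix such a $k$ and unpack the two membership conditions through the maps $\phi$ and $k \mapsto j_k$ of \cref{fact:L}. Since $\Ee_z = \mcup_{t\in\Ii_x}J_t$ (see \cref{eq:support_Ez}), $k \in \Ee_z$ forces $t := \phi(k) \in \Ii_x$, i.e. $x_{G_t} \neq 0$; and by \cref{fact:L} \BLUE{(a),(d)} the $k$-th row of $L$ has a single nonzero entry $L_{k,j_k}=w_t$ with $j_k \in G_t$. Reading off the computation already carried out in the proof of \cref{lemma:proj} \BLUE{(a)}, $\pa{L P_{\Tt_x}\bm{1}_n}_k = w_t$ when $j_k \in \Ee_x$ and $0$ otherwise, so that $\Ee_L = \{\,k\in\dbrack{p} : j_k \in \Ee_x\,\}$; hence $k \notin \Ee_L$ means precisely $j_k \notin \Ee_x$.

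Then I would invoke the structure of $\Ee_x$. From $j_k \notin \Ee_x = \dbrack{n}\setminus \mcup_{s\in\Ii_x^c}G_s$ there is an inactive group $s \in \Ii_x^c$ with $j_k \in G_s$, whence $x_{j_k}=0$ because $x_{G_s}=0$. On the other hand, since $t=\phi(k)\in\Ii_x$, the subdifferential formula \cref{eq:subdiff_l2} yields $u_{J_t} = x_{G_t}/\norm{x_{G_t}}$, so reading the entry indexed by $k$ (under the natural bijection $J_t \leftrightarrow G_t$ induced by $L$, using $L_{k,j_k}=w_t$ and $z_{J_t}=w_t x_{G_t}$) gives $u_k = x_{j_k}/\norm{x_{G_t}} = 0$. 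This is exactly what was required.

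I do not expect a genuine obstacle here: the statement is essentially definitional. The only point needing care is the bookkeeping that links $k$, $\phi(k)$, and $j_k$ via \cref{fact:L}, together with the observation — implicit in the proof of \cref{lemma:proj} \BLUE{(a)} — that $\Ee_L$ coincides with $\{k : j_k \in \Ee_x\}$; this identification is the real content of the argument, after which the conclusion $u_k = 0$ for $k\in\Ee_z\setminus\Ee_L$ follows immediately from the subgradient formula.
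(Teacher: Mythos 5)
Your proposal is correct and takes essentially the same route as the paper: the paper's one-line proof uses the subgradient formula $u_{J_t}=x_{G_t}/\norm{x_{G_t}}$ on active groups to conclude $\supp(P_{\Tt_{z}}u)\subseteq \supp(LP_{\Tt_x}\bm{1}_n)=\Ee_L$, which is exactly your coordinate-wise check that $u_k=0$ for every $k\in\Ee_z\setminus\Ee_L$. Your write-up just makes the bookkeeping ($\phi(k)$, $j_k$, and $\Ee_L=\{k: j_k\in\Ee_x\}$) explicit, and correctly uses only the inclusion $\supp(x_{G_t})\subseteq G_t\cap\Ee_x$ where the paper loosely asserts an equality of supports.
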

\begin{proof}
Since $u \in \partial \norm{Lx}_{1,2}$, given $t \in \Ii_x$, $u_{J_t} = x_{G_t} /\norm{x_{G_t}}$, we have $\supp(u)_{J_t} = \supp(L P_{\Tt_x} \bm{1}_n)_{J_t}$, hence $\supp(P_{\Tt_z}u) \subseteq \supp(L P_{\Tt_x}\bm{1}_n) = \Ee_L$.
\end{proof}

\subsection{Properties of the effective lifting operator}
\label{appendix:widehat}

We isolate the full column-rank property of $\widehat{L}$ as a lemma. 

\begin{lemma}\label{lem:hatL_full_rank}
$\widehat{L}$ has full column rank, and $\widehat{L}^\top \widehat{L}$ is diagonal and invertible. Moreover, there holds $L^\top \widehat{L} = \widehat{L}^\top \widehat{L}$.
\end{lemma}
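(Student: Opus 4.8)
The plan is to first put $\widehat L$ into a ``block'' form that separates its action on $\Tt_x$ and on $\Tt_x^\bot$. Writing $L=LP_{\Tt_x}+LP_{\Tt_x^\bot}$ and splitting $LP_{\Tt_x^\bot}=P_{\Tt_z}LP_{\Tt_x^\bot}+P_{\Tt_z^\bot}LP_{\Tt_x^\bot}$, the definition of $\widehat L$ collapses to
\[
\widehat L \;=\; LP_{\Tt_x}\;+\;P_{\Tt_z^\bot}LP_{\Tt_x^\bot}.
\]
Expanding $\widehat L^\top\widehat L$ from this expression yields four terms; the two cross terms are $P_{\Tt_x}L^\top P_{\Tt_z^\bot}LP_{\Tt_x^\bot}$ and its transpose, both of which vanish because $P_{\Tt_x}L^\top P_{\Tt_z^\bot}=0$ and $P_{\Tt_z^\bot}LP_{\Tt_x}=0$ by \cref{lemma:proj} (c)--(d). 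What survives is $\widehat L^\top\widehat L=P_{\Tt_x}L^\top LP_{\Tt_x}+P_{\Tt_x^\bot}L^\top P_{\Tt_z^\bot}LP_{\Tt_x^\bot}$.

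Next I would use $P_{\Tt_z^\bot}=\Id-P_{\Tt_z}$ to rewrite the second summand as $P_{\Tt_x^\bot}L^\top LP_{\Tt_x^\bot}-P_{\Tt_x^\bot}L^\top P_{\Tt_z}LP_{\Tt_x^\bot}$, then invoke that $L^\top L$ is diagonal (\cref{fact:L} (c)) and that $P_{\Tt_x},P_{\Tt_x^\bot}$ are complementary coordinate projections, so $P_{\Tt_x}L^\top LP_{\Tt_x}+P_{\Tt_x^\bot}L^\top LP_{\Tt_x^\bot}=L^\top L$. This gives the clean identity
\[
\widehat L^\top\widehat L \;=\; L^\top L\;-\;P_{\Tt_x^\bot}\,L^\top P_{\Tt_z}L\,P_{\Tt_x^\bot}.
\]
Since both $L^\top L$ and $L^\top P_{\Tt_z}L$ are diagonal (\cref{fact:L} (c) and \cref{lemma:proj} (e)) and $P_{\Tt_x^\bot}$ is a coordinate projection, the right-hand side is diagonal.

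To see invertibility I would read off the diagonal entries. For $i\in\Ee_x$ the factor $P_{\Tt_x^\bot}$ kills the correction, so $(\widehat L^\top\widehat L)_{i,i}=(L^\top L)_{i,i}=\sum_{t:\,i\in G_t}w_t^2>0$. For $i\notin\Ee_x$ it equals $(L^\top L)_{i,i}-(L^\top P_{\Tt_z}L)_{i,i}=\sum_{t\in\Ii_x^c:\,i\in G_t}w_t^2$, which is still strictly positive because $i\notin\Ee_x$ means, by the definition \cref{eq:support_Ex}, that $i\in G_t$ for some inactive group $t\in\Ii_x^c$. Hence $\widehat L^\top\widehat L$ is a positive diagonal matrix, in particular invertible, and therefore $\widehat L$ has full column rank. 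For the last claim I would compute $L^\top\widehat L=L^\top L-L^\top P_{\Tt_z}L\,P_{\Tt_x^\bot}$ directly and note that, since $L^\top P_{\Tt_z}L$ is diagonal and $P_{\Tt_x^\bot}$ is a coordinate projection, $L^\top P_{\Tt_z}L\,P_{\Tt_x^\bot}=P_{\Tt_x^\bot}L^\top P_{\Tt_z}L\,P_{\Tt_x^\bot}$, which matches the displayed formula for $\widehat L^\top\widehat L$.

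The computations above are routine; the only steps needing care are the vanishing of the cross terms, where one must match the right orthogonality relations from \cref{lemma:proj} (in particular $P_{\Tt_z^\bot}LP_{\Tt_x}=0$), and the observation that ``$i\notin\Ee_x$'' forces $i$ into an inactive group. The latter is exactly what upgrades the diagonal from nonnegative to strictly positive, and hence is the real content behind invertibility.
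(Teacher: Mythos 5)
Your proof is correct and follows essentially the same route as the paper's: both reduce $\widehat L^\top\widehat L$ to the diagonal matrix $L^\top L - P_{\Tt_x^\bot}L^\top P_{\Tt_z}L\,P_{\Tt_x^\bot}$ via the projection identities of \cref{lemma:proj}, read off the same diagonal entries, and get invertibility from the observation that $i\in\Ee_x^c$ forces $i$ to lie in some inactive group. Your intermediate block form $\widehat L = LP_{\Tt_x}+P_{\Tt_z^\bot}LP_{\Tt_x^\bot}$ with vanishing cross terms is just a cosmetic reorganization of the paper's direct expansion, which instead collapses the correction terms using the diagonality of $L^\top P_{\Tt_z}L$ from \cref{lemma:proj}~(e).
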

\begin{proof}
By definition of $\widehat{L}$ we have
\begin{equation}\label{eq:hatL_hatL}
    \begin{aligned}
        \widehat{L}^\top \widehat{L} &= L^\top L - P_{\Tt_{x}^\bot} L^\top P_{\Tt_{z}} L - L^\top P_{\Tt_{z}} L P_{\Tt_{x}^\bot} + P_{\Tt_{x}^\bot} L^\top P_{\Tt_{z}} L P_{\Tt_{x}^\bot} \\ 
        &= L^\top L - P_{\Tt_{x}^\bot} L^\top P_{\Tt_{z}} L .
    \end{aligned}
\end{equation}
Using \BLUE{(e)} of \cref{lemma:proj}, the diagonal entries of $\widehat{L}^\top \widehat{L}$ are
\begin{equation}
\label{eq:W_ii}
    \pa{ \widehat{L}^\top \widehat{L} }_{(i,i)}
=\left\{
\begin{aligned}
\ssum_{ \ba{ t\in\dbrack{\Nn} \mid i \in G_{t} } } w_t^2  ,&\quad  i \in  {\Ee_{x}} , \\
\ssum_{ \ba{ t\in \Ii_{x}^c \mid i \in G_{t}  }  } w_t^2  ,&\quad  i \in  \Ee_{x}^c  ,
\end{aligned}
\right.
\end{equation}
by the definition of $\Ee_x^c$, $\ba{t\in \Ii_{x}^c \mid i \in G_{t}}$ is non-empty when $i \in \Ee_x^c$, which implies $\widehat{L}^\top \widehat{L}$ is invertible. Note that 
\begin{equation*}
    L^\top \widehat{L} = L^\top (L - P_{\Tt_{z}} L P_{\Tt_{x}^\bot}) = L^\top L - L^\top P_{\Tt_{z}} L P_{\Tt_{x}^\bot},
\end{equation*}
combining with \cref{eq:hatL_hatL} concludes the proof.
\end{proof}

\begin{proof}[Proof of \cref{rem:widehat_eq}]
 Based on \cref{lem:lemma2.4} and \cref{lemma:proj} \BLUE{(b)}, for any $u \in \partial \norm{Lx}_{1,2}$, we have $(P_{\Tt_z} L P_{\Tt_x^\bot})^\top u=L^\top(P_{\Tt_{L}^\bot} P_{\Tt_z}) u=0$, therefore
    $L^\top u = (L - P_{\Tt_z} L P_{\Tt_x^\bot})^\top u$. Since $\widehat{L} = L - P_{\Tt_{z}}LP_{\Tt_{x}^\bot}$, we obtain
    \begin{equation*}
        L^\top u = \widehat{L}^\top u  ,
    \end{equation*}
    which concludes the proof.
\end{proof}


\section{Proof of certification property of OGN certificate}
\label{appendix:ogn_cert}

\begin{proof}[Proof of \cref{prop:ogn_certificate}]
We first show that there holds $L^\top u^\dagger = \betasol$. 
By definition of $u^\dagger$, we have
\begin{align*}
    L^\top u^\dagger &= L^\top (P_{\Tt_{\zsol}} u^\dagger + P_{\Tt_{\zsol}^\bot} u^\dagger) \\
    &= L^\top (P_{\Tt_{\zsol}} \usol + P_{\Tt_{\zsol}^\bot} \widehat{u}_{\min}) \\
    &= P_{\Tt_{\xsol}} L^\top P_{\Tt_{\zsol}} \usol + L^\top P_{\Tt_{\zsol}^\bot} \widehat{u}_{\min} \doubleslash{$\pa{P_{\Tt_{\zsol}}LP_{\Tt_{\xsol}^\bot}}^\top \usol = 0$, \cref{rem:widehat_eq}}\\
    &= P_{\Tt_{\xsol}} L^\top \usol + P_{\Tt_{\xsol}^\bot} L^\top P_{\Tt_{\zsol}^\bot} \widehat{u}_{\min} \doubleslash{$P_{\Tt_{\xsol}} L^\top P_{\Tt_{\zsol}^\bot} = 0$, \cref{lemma:proj}~(c)}\\
    &= P_{\Tt_{\xsol}} \betasol  + P_{\Tt_{\xsol}^\bot} L^\top P_{\Tt_{\zsol}^\bot} \widehat{u}_{\min} ,
\end{align*}
which implies that we need to show $ P_{\Tt_{\xsol}^\bot} L^\top P_{\Tt_{\zsol}^\bot} \widehat{u}_{\min} = P_{\Tt_{\xsol}^\bot} \betasol$. Note that 
\begin{equation}
\label{eq:Ltopumin}
    P_{\Tt_{\xsol}^\bot}L^\top P_{\Tt_{\zsol}^\bot} \widehat{u}_{\min} = P_{\Tt_{\xsol}^\bot}L^\top  \widehat{u}_{\min} - P_{\Tt_{\xsol}^\bot}L^\top P_{\Tt_{\zsol}} \widehat{u}_{\min}.
\end{equation}
For the right hand side of the equality,
\begin{itemize}
\item For the first term, combining $L^\top \widehat{L} = \widehat{L}^\top \widehat{L}$ from \cref{lem:hatL_full_rank} and the definition of $\widehat{u}_{\min}$ \eqref{eq:umin}, we get
\[
P_{\Tt_{\xsol}^\bot}L^\top  \widehat{u}_{\min} = P_{\Tt_{\xsol}^\bot}L^\top \widehat{L} (\widehat{L}^\top \widehat{L})^{-1} \betasol = P_{\Tt_{\xsol}^\bot}\betasol.
\]

\item 
For the second term, note that
\begin{align*}
P_{\Tt_{\xsol}^\bot} L^\top P_{\Tt_{\zsol} } \widehat{u}_{\min}
&= 
P_{\Tt_{\xsol}^\bot} L^\top P_{\Tt_{\zsol} } \widehat{L} \pa{ \widehat{L}^\top \widehat{L} }^{-1} \betasol \\
&=
P_{\Tt_{\xsol}^\bot} L^\top P_{\Tt_{\zsol} } \pa{L - P_{ \Tt_{\zsol}}L P_{\Tt_{\xsol}^\bot}} \pa{ \widehat{L}^\top \widehat{L} }^{-1} \betasol \doubleslash{definition of $\widehat{L}$} \\
&=P_{\Tt_{\xsol}^\bot} L^\top P_{\Tt_{\zsol} } \pa{L - L P_{\Tt_{\xsol}^\bot}} \pa{ \widehat{L}^\top \widehat{L} }^{-1} \betasol \\
&=
P_{\Tt_{\xsol}^\bot} L^\top P_{\Tt_{\zsol} } \pa{L P_{\Tt_{\xsol}}} \pa{ \widehat{L}^\top \widehat{L} }^{-1} \betasol \\
&= 0. \doubleslash{$L^\top P_{\Tt_{\zsol}} L$ is diagonal}
\end{align*}

\end{itemize}
Summarizing the above results concludes the proof of $L^\top u^\dagger = \betasol$.

For the certification property of $u^\dagger$, suppose $\norm{u^\dagger_{J_t}} < 1$. If $\xsol_{G_t} \neq 0$, according to \cref{def:ogn-cert}, $\norm{u^\dagger_{J_t}} = 1$, which is a contradiction.
\end{proof}

\begin{proof}[Proof of \cref{prop:link_two}]
Note for $t\in\Ii_{\xsol}^c$, $G_t \subseteq \Ee_{\xsol}^c, \xsol_{G_t} = 0$ and $u^\dagger_{J_t} = (\widehat{u}_{\min})_{J_t}$. 
%
Owing to \cref{lem:hatL_full_rank} that $\widehat{L}^\top \widehat{L}$ is diagonal, and for any $i\in G_t$,
\[
\pa{ \widehat{L}^\top \widehat{L} }_{(i,i)}
=
\msum_{ \ba{ j \in \Ii_{\xsol}^c \mid i \in G_{j}  }  } w_j^2 
\geq w_t^2 
\quad\Longrightarrow\quad
\pa{ \widehat{L}^\top \widehat{L} }_{(i,i)}^{-1} \leq \sfrac{1}{w_t^2}  . 
\]
Let $\widehat{L}_{G_t}$ be the submatrix of $\widehat{L}$ by selecting the columns of $\widehat{L}$ via $G_t$, then
\begin{align*}
    u^\dagger_{J_t}
= \Pa{\widehat{L} \pa{ \widehat{L}^\top \widehat{L} }^{-1} \betasol}_{J_t}
&= \Pa{\pa{L - P_{\Tt_{\zsol}}LP_{\Tt_{\xsol}^\bot}} \pa{ \widehat{L}^\top \widehat{L} }^{-1} \betasol}_{J_t} \\
& = \Pa{L \pa{ \widehat{L}^\top \widehat{L} }^{-1} \betasol}_{J_t} \doubleslash{$\forall x \in \RR^n, (P_{\Tt_{\zsol}} Lx)_{J_t} = 0$ for $t \in \Ii_{\xsol}^c$} \\
&= w_t \pa{ \widehat{L}^\top_{G_t} \widehat{L}_{G_t} }^{-1} \betasol_{G_t} .\doubleslash{Definition of $L$, \cref{eq:zJi_eq}}
\end{align*}
Consequently
\[
\begin{aligned}
\norm{u^\dagger_{J_t}}
= w_t \norm{ \pa{ \widehat{L}^\top_{G_t} \widehat{L}_{G_t} }^{-1} \betasol_{G_t} } 
\leq w_t \sfrac{1}{w_t^2} \norm{ \betasol_{G_t} } 
= \sfrac{1}{w_t} \norm{ \betasol_{G_t} }  ,
\end{aligned} 
\]
which concludes the proof. 
\end{proof}

\section{Tightness of the LASSO certificate}
\label{appendix:relaxation}

From the proof of \cref{prop:lasso_certificate}, relaxations in inequalities are required, see e.g. \cref{eq:beta_Gt}. This implies that the LASSO certificates are not as tight as those for the nonoverlapping case. Therefore, in the following we discuss the tightness of the LASSO certificate $\betasol$, which relies on the two discarded terms in \cref{eq:beta_Gt}
\begin{equation}\label{eq:two_terms}
  T_1\eqdef\msum_{i\in G_t, \xsol_i = 0} \pa{ \betasol_{i} }^2  \qandq T_2\eqdef\msum_{i\in G_t,\xsol_i\neq 0} \msum_{ k\in \mathscr{K}_i\setminus \ba{k_i}, j = \phi(k)  } \pa{  w_j \usol_k }^2 .
\end{equation}
A simple example is provided in \cref{fig:two_terms} to illustrate these two terms. 

\begin{figure}[htb]
    \centering
    \includegraphics[height=41mm, trim={21mm 2.35mm 19mm 2mm},clip]{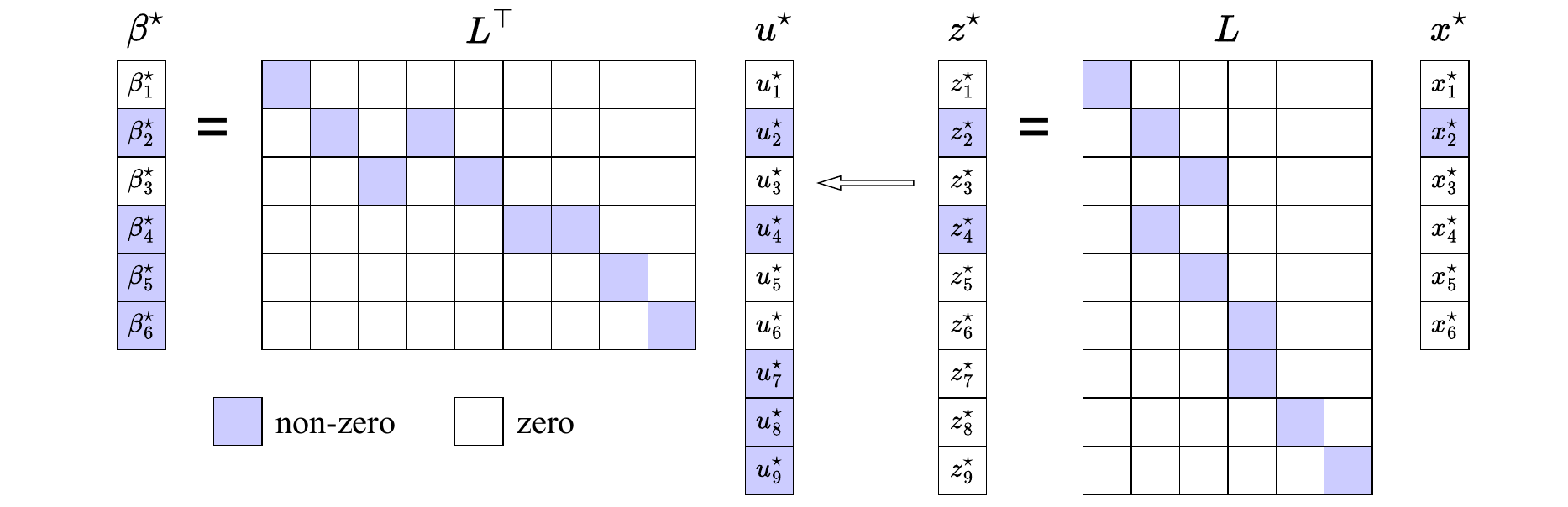} 
    \vspace*{-1ex}
    \caption{
    Illustration for the computation of the two discarded terms: i) $\xsol$ has only one nonzero entry; ii) The covering $\Gg = \Ba{ \ba{1,2,3}, \ba{2,3,4}, \ba{4,5,6} }$; iii) The index of the nonzero groups of $x$ is $\Ii_x = \Ba{1,2}$; iv) The extended support of $x$ and $z$ are $\Ee_x = \Ba{1,2,3}$ and $\Ee_z = \Ba{1,2,3,4,5,6}$. 
    } \vspace*{-2ex}
    \label{fig:two_terms}
\end{figure}

{\bf Term $T_1$:} For the purpose of simplicity, suppose ${G_t}$ only intersects with one nonzero group, say ${G_{t+1}}$, with the overlapping entries being nonzero. Let 
$
O_{t} \eqdef  G_t \mcap  G_{t+1} ,
$ 
for each $i\in O_{t}$, the set $\mathscr{K}_{i}$ in \cref{eq:Ki} contains only two elements that are
\[
 k_{i, t} \in J_{t}
 \qandq
 k_{i, t+1} \in J_{t+1}   .
\]
Back to the term $T_1$, since now $k_i = k_{i, t}$, we have the following simplification
\[
\begin{aligned}
\msum_{i\in G_t,\xsol_i\neq 0} \msum_{ k\in \mathscr{K}_i\setminus \ba{k_i}, j = \phi(k)  } \pa{  w_j \usol_k }^2
&= \msum_{i\in O_{t},\xsol_i\neq 0} \pa{  w_{t+1} \usol_{k_{i, t+1}} }^2 \\
&= w_{t+1}^2 \msum_{i\in O_{t}} \pa{   \usol_{k_{i, t+1}}  }^2  \\
&= w_{t+1}^2 { \norm{\xsol_{O_{t}}}^2 }/{ \norm{\xsol_{G_{t+1}}}^2 }  .
\end{aligned} 
\]
Here the ratio ${ \norm{\xsol_{O_{t}}}^2 }/{ \norm{\xsol_{G_{t+1}}}^2 } $ represents the energy of the overlapping part against the overlapping group, clearly the larger the ratio the less tight the bound in \cref{eq:beta_Gt}.

{\bf Term $T_2$:} Let $t\in\Ii_{\xsol}$, for each $i\in G_t$ with $\xsol_i=0$, there are two cases for $\betasol_i$
\begin{enumerate}[label={\rm \alph{*}).}]
    \item Entry $\xsol_i$ is either not in the overlapping region ($i=1$ in \cref{fig:two_terms}) or in the overlapping region with only nonzero groups ($i=3$ in \cref{fig:two_terms}), the corresponding entry in $\usol$ is zero, hence $\betasol_i = 0$. 

    \item Entry $\xsol_i$ also belongs to a zero group, then in general $\betasol_i\neq 0$. In \cref{fig:two_terms}, $\usol_{6}, \usol_{7}$ correspond to $\xsol_4$ with $\usol_6=0$, $\usol_{7}$ in general is not zero as it is a subgradient from zero group, hence $\betasol_4\neq 0$. 
\end{enumerate}
If we further combine the definitions of $\mathscr{K}_i$ in \cref{eq:Ki} and $\phi(\cdot)$ in \cref{eq:phi_k}, then for $T_2$
\[
\begin{aligned}
\msum_{i\in G_t, \xsol_i = 0} \pa{ \betasol_{i} }^2
&= \msum_{i\in G_t, \xsol_i = 0} \bPa{ \msum_{ k\in \mathscr{K}_i, j = \phi(k)  } w_{j} \usol_k  }^2 \\
&= \msum_{i\in G_t, \xsol_i = 0} \bPa{ \msum_{ k\in \mathscr{K}_i, j = \phi(k), \xsol_{G_j} = 0  } w_j \usol_k  }^2 ,
\end{aligned}
\]
which implies that {the value of $T_2$ is controlled by the zero groups that are overlapping with the current nonzero group.}

\begin{remark}
    Recall from \cref{eq:subdiff_l2}, that the choice of $\usol$ for the zero groups is such that $\betasol = L^\top \usol$, therefore in general we cannot further control term $T_2$. 
\end{remark}

\begin{small}
\bibliographystyle{siamplain}
\bibliography{reference}

@article{Vaiter13,
	Author = {S. Vaiter and M. Golbabaee and J. M. Fadili and G. Peyr\'e},
	Date-Modified = {2015-03-11 09:35:02 +0000},
	Journal = {Information and Inference: A Journal of the IMA (to appear)},
	Title = {Model Selection with Low Complexity Priors},
	Year = {2015}}

@article{eckstein1992douglas,
  title={On the {D}ouglas—-{R}achford splitting method and the proximal point algorithm for maximal monotone operators},
  author={Eckstein, Jonathan and Bertsekas, Dimitri P},
  journal={Mathematical Programming},
  volume={55},
  pages={293--318},
  year={1992},
  publisher={Springer}
}

@article{esser2010general,
  title={A general framework for a class of first order primal-dual algorithms for convex optimization in imaging science},
  author={Esser, Ernie and Zhang, Xiaoqun and Chan, Tony F},
  journal={SIAM Journal on Imaging Sciences},
  volume={3},
  number={4},
  pages={1015--1046},
  year={2010}
  }

@article{chambolle2011first,
	author = {Chambolle, A. and Pock, T.},
	journal = {Journal of Mathematical Imaging and Vision},
	number = {1},
	pages = {120--145},
	publisher = {Springer},
	title = {A first-order primal-dual algorithm for convex problems with applications to imaging},
	volume = {40},
	year = {2011}}

@article{condat2012primal,
	author = {Condat, L.},
	journal = {Journal of Optimization Theory and Applications},
	pages = {1--20},
	publisher = {Springer},
	title = {A primal--dual splitting method for convex optimization involving Lipschitzian, proximable and linear composite terms},
	year = {2012}}

@article{vu2011splitting,
	author = {V{\~u}, B. C.},
	journal = {Advances in Computational Mathematics},
	pages = {667--681},
	publisher = {Springer},
	title = {A splitting algorithm for dual monotone inclusions involving cocoercive operators},
	volume = {38(3)},
	year = {2013}}

@article{gabay1983chapter,
	Author = {Gabay, D.},
	Journal = {Studies in mathematics and its applications},
	Pages = {299--331},
	Publisher = {Elsevier},
	Title = {Chapter IX applications of the method of multipliers to variational inequalities},
	Volume = {15},
	Year = {1983}}

@article{gabay1976dual,
  title={A dual algorithm for the solution of nonlinear variational problems via finite element approximation},
  author={Gabay, Daniel and Mercier, Bertrand},
  journal={Computers \& Mathematics with Applications},
  volume={2},
  number={1},
  pages={17--40},
  year={1976},
  publisher={Elsevier}
}

@article{glowinski1975approximation,
  title={Sur l'approximation, par {\'e}l{\'e}ments finis d'ordre un, et la r{\'e}solution, par p{\'e}nalisation-dualit{\'e} d'une classe de probl{\`e}mes de Dirichlet non lin{\'e}aires},
  author={Glowinski, Roland and Marroco, Americo},
  journal={Revue fran{\c{c}}aise d'automatique, informatique, recherche op{\'e}rationnelle. Analyse num{\'e}rique},
  volume={9},
  number={R2},
  pages={41--76},
  year={1975},
  publisher={EDP Sciences}
}

@article{lions1979splitting,
	author = {Lions, P. L. and Mercier, B.},
	journal = {SIAM Journal on Numerical Analysis},
	number = {6},
	pages = {964--979},
	publisher = {SIAM},
	title = {Splitting algorithms for the sum of two nonlinear operators},
	volume = {16},
	year = {1979}}

@article{LaurentCondat2023,
author = {Condat, Laurent and Kitahara, Daichi and Contreras, Andr\'{e}s and Hirabayashi, Akira},
title = {Proximal Splitting Algorithms for Convex Optimization: A Tour of Recent Advances, with New Twists},
journal = {SIAM Review},
volume = {65},
number = {2},
pages = {375-435},
year = {2023},
doi = {10.1137/20M1379344}
}

@book{beck2017first,
  title={First-order methods in optimization},
  author={Beck, Amir},
  year={2017},
  publisher={SIAM}
}

@article{golub2003separable,
  title={Separable nonlinear least squares: the variable projection method and itsapplications},
  author={Golub, Gene and Pereyra, Victor},
  journal={Inverse problems},
  volume={19},
  number={2},
  pages={R1},
  year={2003},
  publisher={IOP Publishing}
}

@article{golub1973differentiation,
  title={The differentiation of pseudo-inverses and nonlinear least squares problems whose variables separate},
  author={Golub, Gene H and Pereyra, Victor},
  journal={SIAM Journal on numerical analysis},
  volume={10},
  number={2},
  pages={413--432},
  year={1973},
  publisher={SIAM}
}

@article{poon2021smooth,
  title={Smooth bilevel programming for sparse regularization},
  author={Poon, Clarice and Peyr{\'e}, Gabriel},
  journal={Advances in Neural Information Processing Systems},
  volume={34},
  pages={1543--1555},
  year={2021}
}

@article{poon2023smooth,
  title={Smooth over-parameterized solvers for non-smooth structured optimization},
  author={Poon, Clarice and Peyr{\'e}, Gabriel},
  journal={Mathematical programming},
  volume={201},
  number={1},
  pages={897--952},
  year={2023},
  publisher={Springer}
}

@article{lee2014screening,
  title={Screening rules for overlapping group lasso},
  author={Lee, Seunghak and Xing, Eric P},
  journal={arXiv preprint arXiv:1410.6880},
  year={2014}
}

@inproceedings{fercoq2015mind,
  title={Mind the duality gap: safer rules for the lasso},
  author={Fercoq, Olivier and Gramfort, Alexandre and Salmon, Joseph},
  booktitle={International conference on machine learning},
  pages={333--342},
  year={2015},
  organization={PMLR}
}

@inproceedings{massias2018celer,
  title={Celer: a fast solver for the lasso with dual extrapolation},
  author={Massias, Mathurin and Gramfort, Alexandre and Salmon, Joseph},
  booktitle={International Conference on Machine Learning},
  pages={3315--3324},
  year={2018},
  organization={PMLR}
}

@article{li2018highly,
  title={A highly efficient semismooth Newton augmented Lagrangian method for solving Lasso problems},
  author={Li, Xudong and Sun, Defeng and Toh, Kim-Chuan},
  journal={SIAM Journal on Optimization},
  volume={28},
  number={1},
  pages={433--458},
  year={2018},
  publisher={SIAM}
}

@article{yuan2025adaptive,
  title={Adaptive sieving: a dimension reduction technique for sparse optimization problems},
  author={Yuan, Yancheng and Lin, Meixia and Sun, Defeng and Toh, Kim-Chuan},
  journal={Mathematical Programming Computation},
  pages={1--32},
  year={2025},
  publisher={Springer}
}

@article{yuan2006model,
  title={Model selection and estimation in regression with grouped variables},
  author={Yuan, Ming and Lin, Yi},
  journal={Journal of the Royal Statistical Society Series B: Statistical Methodology},
  volume={68},
  number={1},
  pages={49--67},
  year={2006},
  publisher={Oxford University Press}
}

@article{gramfort2012mixed,
  title={Mixed-norm estimates for the M/EEG inverse problem using accelerated gradient methods},
  author={Gramfort, Alexandre and Kowalski, Matthieu and H{\"a}m{\"a}l{\"a}inen, Matti},
  journal={Physics in Medicine \& Biology},
  volume={57},
  number={7},
  pages={1937},
  year={2012},
  publisher={IOP Publishing}
}

@article{beck2009fast,
  title={A fast iterative shrinkage-thresholding algorithm for linear inverse problems},
  author={Beck, Amir and Teboulle, Marc},
  journal={SIAM journal on imaging sciences},
  volume={2},
  number={1},
  pages={183--202},
  year={2009},
  publisher={SIAM}
}

@article{xiao2018regularized,
  title={A regularized semi-smooth Newton method with projection steps for composite convex programs},
  author={Xiao, Xiantao and Li, Yongfeng and Wen, Zaiwen and Zhang, Liwei},
  journal={Journal of Scientific Computing},
  volume={76},
  pages={364--389},
  year={2018},
  publisher={Springer}
}

@article{tibshirani1996regression,
  title={Regression shrinkage and selection via the lasso},
  author={Tibshirani, Robert},
  journal={Journal of the Royal Statistical Society Series B: Statistical Methodology},
  volume={58},
  number={1},
  pages={267--288},
  year={1996},
  publisher={Oxford University Press}
}

@article{bach2012optimization,
  title={Optimization with sparsity-inducing penalties},
  author={Bach, Francis and Jenatton, Rodolphe and Mairal, Julien and Obozinski, Guillaume and others},
  journal={Foundations and Trends{\textregistered} in Machine Learning},
  volume={4},
  number={1},
  pages={1--106},
  year={2012},
  publisher={Now Publishers, Inc.}
}

@book{golub2013matrix,
  title={Matrix computations},
  author={Golub, Gene H and Van Loan, Charles F},
  year={2013},
  publisher={JHU press}
}

@article{byrd1995limited,
  title={A limited memory algorithm for bound constrained optimization},
  author={Byrd, Richard H and Lu, Peihuang and Nocedal, Jorge and Zhu, Ciyou},
  journal={SIAM Journal on scientific computing},
  volume={16},
  number={5},
  pages={1190--1208},
  year={1995},
  publisher={SIAM}
}

@article{chang2011libsvm,
  title={LIBSVM: a library for support vector machines},
  author={Chang, Chih-Chung and Lin, Chih-Jen},
  journal={ACM transactions on intelligent systems and technology (TIST)},
  volume={2},
  number={3},
  pages={1--27},
  year={2011},
  publisher={Acm New York, NY, USA}
}

@article{friedman2010regularization,
  title={Regularization paths for generalized linear models via coordinate descent},
  author={Friedman, Jerome and Hastie, Trevor and Tibshirani, Rob},
  journal={Journal of statistical software},
  volume={33},
  number={1},
  pages={1},
  year={2010},
  publisher={NIH Public Access}
}

@article{ghaoui2010safe,
  title={Safe feature elimination for the lasso and sparse supervised learning problems},
  author={Ghaoui, Laurent El and Viallon, Vivian and Rabbani, Tarek},
  journal={arXiv preprint arXiv:1009.4219},
  year={2010}
}

@article{ndiaye2017gap,
  title={Gap safe screening rules for sparsity enforcing penalties},
  author={Ndiaye, Eugene and Fercoq, Olivier and Salmon, Joseph and others},
  journal={Journal of Machine Learning Research},
  volume={18},
  number={128},
  pages={1--33},
  year={2017}
}

@article{jenatton2011structured,
  title={Structured variable selection with sparsity-inducing norms},
  author={Jenatton, Rodolphe and Audibert, Jean-Yves and Bach, Francis},
  journal={The Journal of Machine Learning Research},
  volume={12},
  pages={2777--2824},
  year={2011},
  publisher={JMLR. org}
}

@article{liang2017activity,
  title={Activity identification and local linear convergence of forward--backward-type methods},
  author={Liang, Jingwei and Fadili, Jalal and Peyr{\'e}, Gabriel},
  journal={SIAM Journal on Optimization},
  volume={27},
  number={1},
  pages={408--437},
  year={2017},
  publisher={SIAM}
}

@article{wang2013lasso,
  title={Lasso screening rules via dual polytope projection},
  author={Wang, Jie and Zhou, Jiayu and Wonka, Peter and Ye, Jieping},
  journal={Advances in neural information processing systems},
  volume={26},
  year={2013}
}

@inproceedings{rao2011convex,
  title={Convex approaches to model wavelet sparsity patterns},
  author={Rao, Nikhil S and Nowak, Robert D and Wright, Stephen J and Kingsbury, Nick G},
  booktitle={2011 18th IEEE International Conference on Image Processing},
  pages={1917--1920},
  year={2011},
  organization={IEEE}
}

@article{zhang2020efficient,
  title={An efficient Hessian based algorithm for solving large-scale sparse group Lasso problems},
  author={Zhang, Yangjing and Zhang, Ning and Sun, Defeng and Toh, Kim-Chuan},
  journal={Mathematical Programming},
  volume={179},
  pages={223--263},
  year={2020},
  publisher={Springer}
}

@article{li2018efficiently,
  title={On efficiently solving the subproblems of a level-set method for fused lasso problems},
  author={Li, Xudong and Sun, Defeng and Toh, Kim-Chuan},
  journal={SIAM Journal on Optimization},
  volume={28},
  number={2},
  pages={1842--1866},
  year={2018},
  publisher={SIAM}
}

@article{yuan2011efficient,
  title={Efficient methods for overlapping group lasso},
  author={Yuan, Lei and Liu, Jun and Ye, Jieping},
  journal={Advances in neural information processing systems},
  volume={24},
  year={2011}
}

@MANUAL{Liu:2009:SLEP:manual,
   title= {SLEP: Sparse Learning with Efficient Projections},
   author= {J. Liu and S. Ji and J. Ye},
   organization= {Arizona State University},
   year= {2009},
   url= {http://www.public.asu.edu/~jye02/Software/SLEP},
}

@article{ndiaye2015gap,
  title={Gap safe screening rules for sparse multi-task and multi-class models},
  author={Ndiaye, Eugene and Fercoq, Olivier and Gramfort, Alexandre and Salmon, Joseph},
  journal={Advances in neural information processing systems},
  volume={28},
  year={2015}
}

@inproceedings{jacob2009group,
  title={Group lasso with overlap and graph lasso},
  author={Jacob, Laurent and Obozinski, Guillaume and Vert, Jean-Philippe},
  booktitle={Proceedings of the 26th annual international conference on machine learning},
  pages={433--440},
  year={2009}
}

@article{massias2020dual,
  author  = {Mathurin Massias and Samuel Vaiter and Alexandre Gramfort and Joseph Salmon},
  title   = {Dual Extrapolation for Sparse GLMs},
  journal = {Journal of Machine Learning Research},
  year    = {2020},
  volume  = {21},
  number  = {234},
  pages   = {1-33},
  url     = {http://jmlr.org/papers/v21/19-587.html}
}

@article{Duval2016SparseSS,
  title={Sparse Spikes Super-resolution on Thin Grids I: the LASSO},
  author={Vincent Duval and Gabriel Peyr{\'e}},
  year={2016},
  volume={33},
  number={5},
  journal={Inverse Problems},
}

@article{li2024efficient,
  title={An efficient sieving-based secant method for sparse optimization problems with least-squares constraints},
  author={Li, Qian and Sun, Defeng and Yuan, Yancheng},
  journal={SIAM Journal on Optimization},
  volume={34},
  number={2},
  pages={2038--2066},
  year={2024},
  publisher={SIAM}
}

@article{elad2007analysis,
  title={Analysis versus synthesis in signal priors},
  author={Elad, Michael and Milanfar, Peyman and Rubinstein, Ron},
  journal={Inverse problems},
  volume={23},
  number={3},
  pages={947},
  year={2007},
  publisher={IOP Publishing}
}

@article{ndiaye2016gap,
  title={Gap safe screening rules for sparse-group lasso},
  author={Ndiaye, Eugene and Fercoq, Olivier and Gramfort, Alexandre and Salmon, Joseph},
  journal={Advances in neural information processing systems},
  volume={29},
  year={2016}
}
\end{small}

\end{document}